\tikzset{font=\small}
\newtheorem{theorem}{Theorem} [section]
\newtheorem{lemma}[theorem]{Lemma}
\newtheorem{corollary}[theorem]{Corollary}
\newtheorem{proposition}[theorem]{Proposition}
\theoremstyle{definition}
\newtheorem{remark}[theorem]{Remark}
\newtheorem{definition}[theorem]{Definition}
\newtheorem{question}[theorem]{Question}
\newcommand{\ran}{\mathrm{ran}}
\newcommand{\dom}{\mathrm{dom}}
\newcommand{\vertleq}{\rotatebox{90}{$\leqslant$}}
\newcommand{\vertgeq}{\rotatebox{90}{$\geqslant$}}
\subjclass[2010]{20M10, 20M30, 18B40}
\keywords{Ehresmann semigroup, restriction semigroup, proper semigroup, cover, partial action}
\numberwithin{equation}{section}
\title{Proper Ehresmann semigroups}
\author{Ganna Kudryavtseva}
\address{G. Kudryavtseva: University of Ljubljana,
Faculty of Mathematics and Physics, Jadranska ulica~19, SI-1000 Ljubljana, Slovenia and Institute of Mathematics, Physics and Mechanics, Jadranska ulica 19, SI-1000 Ljubljana, Slovenia}
\email{ganna.kudryavtseva\symbol{64}fmf.uni-lj.si}
\author{Valdis Laan}
\address{V. Laan: Institute of Mathematics and Statistics, University of Tartu,
51009 Tartu, Estonia}
\email{valdis.laan\symbol{64}ut.ee}
\thanks{This work was supported by the Slovenian Research Agency grant BI-EE/20-22-010. Research of the first named author was supported by  the Slovenian Research Agency grant P1-0288. Research of the second named author was supported by the Estonian Research Council grant PRG1204.}
\subjclass{20M10, 20M30, 18B40}
\begin{document}
\begin{abstract}  We propose a notion of a proper Ehresmann semigroup based on a three-coordinate description of its generating elements governed by certain labelled directed graphs with additional structure. The generating elements are  determined by their domain projection, range projection and 
$\sigma$-class, where $\sigma$ denotes the minimum congruence that identifies all projections. We prove a structure result on proper Ehresmann semigroups and show that every Ehresmann semigroup has a proper cover. Our covering monoid turns out to be isomorphic to that from the work by Branco, Gomes and Gould and provides a new view of the latter. Proper Ehresmann semigroups all of whose elements admit a three-coordinate description are characterized in terms of partial multiactions of monoids  on semilattices. As a consequence we recover the two-coordinate structure result on proper restriction semigroups.
\end{abstract}

\maketitle

\section{Introduction}\label{s:introduction}
Ehresmann semigroups and their one-sided analogues are widely studied non-regular generalizations of inverse semigroups, see, e.g., \cite{BGG11, EG21, GG01, GG11, Kam11, KK14, KL17, Lawson91, L21,MS21,S17,S18}. They possess two unary operations $a\mapsto a^+$ and $a\mapsto a^*$ which mimic the operations of taking the domain idempotent and the range idempotent in an inverse semigroup. Ehresmann semigroups were defined in the paper by Lawson \cite{Lawson91} and arise naturally from certain categories which appear in the work of the school of Charles Ehresmann on differential geometry.  Lawson's results \cite{Lawson91} generalize the famous Ehresmann-Schein-Nambooripad theorem \cite[Theorem 8.4.1]{Lawson} that connects inverse semigroups with inductive groupoids, as well as its extension that connects restriction semigroups with inductive categories \cite{A84}.  

Ehresmann semigroups looked as algebras $(S; \cdot, ^+, ^*)$ form a variety of algebras. The elements of free objects of this variety admit an elegant graphical description in terms of birooted labelled directed trees, found by Kambites in \cite{Kam11}. This description extends the description of elements of the free restriction semigroups \cite{FGG09, K19} by means of the Munn's famous construction of elements of the free inverse semigroups \cite{Munn74}.

Perhaps the most natural examples of Ehresmann semigroups are the semigroup ${\mathcal{B}}(X)$ of all binary relations on a set $X$ and its subsemigroup ${\mathcal{PT}}(X)$ of all partially defined self-maps of $X$. The latter semigroup is an Ehresmann semigroup which is in addition left restriction, that is, it satisfies an additional identity called the left ample identity (see Section \ref{s:prelim} for the  definition). Extending the results obtained for inverse semigroups by Steinberg \cite{St06,St08},   Stein \cite{S17, S18} proved that the semigroup algebra of a left restriction Ehresmann semigroup is isomorphic to the semigroup algebra of its attached Ehresmann category and applied this result to studying representations of left restriction Ehresmann semigroups. This study was further pursued by Margolis and Stein in \cite{MS21}. 

Ehresmann semigroups appear naturally in the work on non-commutative Stone duality by Lawson and the first named author \cite{KL17}. They are constructed from certain, localic or topological, \'etale categories and possess the additional structure of quantales, see also the recent work by Lawson \cite{L21} where a construction of Ehresmann semigroups from categories inspired by \cite{KL17} is extensively studied.

East and Gray \cite{EG21} have recently studied Ehresmann structures arising on partition monoids and related diagram monoids such as Brauer monoids and rook partition monoids.

The purpose of the present paper is to contribute to the development of the appropriate notion of a proper Ehresmann semigroup, initiated by Branco, Gomes and Gould in \cite{BGG15}, see also Branco et al. \cite{BGGW18}.  An Ehresmann monoid $S$ is defined in \cite{BGG15} to be {\em strongly} $T$-{\em proper},  where $T$ is a monoid, if $T$ is contained in $S$ as a submonoid, $S$ is generated by $T$ and the projections of $S$, and the congruence $\sigma$ (which is the least congruence that identifies all projections) separates $T$. It is proved in \cite{BGG15} that the free Ehresmann monoid $FEM(X)$ is $T$-proper (for $T$ being isomorphic to the free monoid $X^*$) and that every Ehresmann monoid has a $T$-proper cover (that is, for every Ehresmann monoid $S$ there is some $T$-proper Ehresmann monoid $P$ and a surjective projection-separating morphism of Ehresmann monoids from $P$ onto $S$). It has been observed by Jones \cite{Jones16} that, specialized to the class of restriction monoids, $T$-properness reduces to the property of being a perfect restriction monoid  \cite{Jones16, K15}, thus $T$-proper Ehresmann monoids form a natural class of Ehresmann monoids. However, since perfect restriction monoids do not exhaust all proper restriction monoids, $T$-proper Ehresmann monoids generalize a subclass of proper restriction monoids, rather than the whole class of proper restriction monoids. Our motivaton for this research was to develop a notion of a proper Ehresmann semigroup which would not necessarily be a monoid and which would generalize the notion of a proper restriction semigroup. 

Our approach is new, and is not based on that adopted in \cite{BGG15,BGGW18}: our notion of a proper Ehresmann semigroup relies on matching factorizatons of elements into products of certain generating elements, which can be interpreted as arrows of a labelled directed graph (see Section \ref{s:structure} for details), whereas the $T$-proper Ehresmann monoid ${\mathcal P}(T,Y)$ of \cite{BGG15,BGGW18} is defined as a certain quotient of the  free product of a monoid $T$ and a semilattice $Y$, where $T$ acts on $Y$ from the left and from the right. 

Proper restriction semigroups generalize $E$-unitary inverse semigroups which have been widely studied in semigroup theory and have important applications far beyond it. The structure of the latter,  first established by McAlister in \cite{McA74I, McA74II}, can be equivalently described in terms of partial actions of  groups on semilattices \cite{KL04, PR79}. This construction can be smoothly extended to partial actions of monoids on semilattices
to describe the structure of proper restriction semigroups \cite{CG12, K15} (for further applications of partial actions of monoids on semilattices to the study of restriction semigroups see \cite{DKhK21,K19}). Similarly to the fact that the free inverse semigroup is $E$-unitary, the free restriction semigroup is proper. In addition, every restriction semigroup has a proper cover, which parallels the fact that every inverse semigroup has an $E$-unitary cover. In this paper we construct a class of Ehresmann semigroups based on a suitable generalization of the notion of a partial action of a monoid on a semilattice, such that the free Ehresmann semigroup belongs to this class and every Ehresmann semigroup has a cover that belongs to this class. 

We now describe the structure of the paper and highlight its main ideas and results. In Section \ref{s:prelim} we collect the basic definitions and facts. In Section \ref{s:structure} we define proper elements of an Ehresmann semigroup as elements which are uniquely determined by their domain projection, range projection and their $\sigma$-class (Definition \ref{def:proper_elem}). We then define proper Ehresmann semigroups as Ehresmann semigroups generated by an order ideal containing all the projections and consisting of proper elements with the property of the uniqueness of matching factorizations (Definition \ref{def:proper}). 
Recall that in proper restriction semigroups all elements are determined by their domain projection (or their range projection) and their $\sigma$-class \cite{CG12,K15}. However, in the free Ehresmann semigroup $FES(X)$ such two-coordinate description of elements fails (see, e.g., the remark after Question \ref{q:1}), whereas  our three-coordinate approach to generating elements works, and $FES(X)$ is proper (see Proposition~\ref{prop:free_proper}). Having defined proper Ehresmann semigroups, we proceed to a construction which, given a semilattice $E$, a monoid $T$ and a labelled directed graph ${\mathcal G}$ with vertex set $E$ and edges labelled by elements of $T$ possessing additional structure called compatible restrictions and corestrictions (Definition~\ref{def:labelled_graph}), outputs an Ehresmann semigroup, denoted $E\rtimes_{\mathcal G} T$, which, under an additional condition, is proper (Theorem \ref{th:construction}). Then in Theorem~\ref{th:main} we prove that every proper Ehresmann semigroup is isomorphic to one so constructed. In Section~\ref{s:covers} we prove that every Ehresmann semigroup has a proper cover (Theorem \ref{th:cover}). In Section \ref{s:connection} we show that in the case where $S$ is an Ehresmann monoid the covering monoid $P(S)\rtimes_{\mathcal G} X^*$ from the proof of Theorem \ref{th:cover} is isomorphic to the monoid ${\mathcal P}(X^*, P(S))$ from \cite{BGG15}. This observation leads to uniquely determined normal forms of elements of ${\mathcal P}(X^*, P(S))$,  which are different from the (not uniquely determined) normal forms proposed in \cite{BGG15}. As a consequence this implies that the free Ehresmann monoid and the free Ehresmann semigroup are proper (Proposition \ref{prop:free_proper}). In the final Section \ref{s:multiactions} we define partial multiactions of monoids on semilattices which are a special case of the previously defined labelled directed graphs with compatible restrictions and corestrictions and, at the same time, are a natural generalization of partial actions of monoids on semilattices. We first define partial multiactions of a monoid $T$ on a set $X$ (Definition~\ref{def:part_multiaction}) and show that they are in a bijection with premorphisms $T\to {\mathcal B}(X)$. We observe that a proper Ehresmann semigroup is strictly proper (that is, all its elements are proper) if and only if its underlying labelled directed graph reduces to a partial multiaction (Proposition \ref{prop:strictly}). In Corollary \ref{cor:special} we provide a structure result on proper left restriction Ehresmann semigroups. If $S$ is a restriction semigroup, its attached partial multiaction reduces to the usual partial action of a monoid on a semilattice by partial bijections between order ideals, and  we thus recover the known structure result on proper restriction semigroups \cite{CG12}, as it is formualted in \cite{K15}. We conclude the paper by two open questions about strictly proper Ehresmann semigroups.

\section{Preliminaries}\label{s:prelim}
\subsection{Ehresmann and restriction semigroups} 
We now define the main objects of study of this paper. For more information we refer the reader to the survey \cite{G10}, see also the recent papers on the subject \cite{BGG15,BGGW18,CDEGZ22}.

\begin{definition} [Ehresmann semigroups] A {\em left Ehresmann semigroup} is an algebra $(S; \cdot \,, ^+)$, where $(S;\cdot)$ is a semigroup and $^+$ is a unary operation satisfying the following identities:
\begin{equation}\label{eq:axioms_plus}
x^+x=x, \,\,\, (x^+y^+)^+=x^+y^+=y^+x^+, \,\,\, (xy)^+=(xy^+)^+.
\end{equation}

Dually, a {\em right Ehresmann semigroup} is an algebra $(S; \cdot \,, ^*)$, where $(S;\cdot)$ is a semigroup and $^*$ is a unary operation satisfying the following identities:
\begin{equation}\label{eq:axioms_star}
xx^*=x, \,\,\, (x^*y^*)^*=x^*y^*=y^*x^*, \,\,\, (xy)^*=(x^*y)^*.
\end{equation}

A {\em two-sided Ehresmann semigroup}, or just an {\em Ehresmann semigroup}, is an algebra $(S; \cdot\, , ^*, ^+)$, where $(S;\cdot\, , ^+)$ is a left Ehresmann semigroup, $(S;\cdot\, , ^*)$ is a right Ehresmann semigroup, and the operations $^*$ and $^+$ are connected by the following identities:
\begin{equation}\label{eq:axioms_common}
(x^+)^*=x^+,\,\,\, (x^*)^+=x^*.
\end{equation}
\end{definition}
		
Restriction semigroups form an important subclass of Ehresmann semigroups. They are defined as follows.	
\begin{definition}[Restriction semigroups] A {\em left restriction semigroup} is an algebra $(S; \cdot \,, ^+)$ which is a left Ehresmann semmigroup and in addition satisfies the identity
\begin{equation}\label{eq:axioms_plus_restr}
xy^+ = (xy)^+x.
\end{equation}

Dually, a {\em right restriction semigroup} is an algebra $(S; \cdot \,, ^*)$ which is a right Ehresmann semigroup and in addition satisfies the identity
\begin{equation}\label{eq:axioms_star_restr}
x^*y = y(xy)^*
\end{equation}
A {\em two-sided restriction semigroup}, or just a {\em restriction semigroup} is an algebra $(S; \cdot\, , ^*, ^+)$ which is a left and a right restriction semigroup and \eqref{eq:axioms_common} holds.
\end{definition}
		
Restriction semigroups, in turn, generalize inverse semigroups. Recall that a semigroup $S$ is called an {\em inverse semigroup} if for each $a\in S$ there exists unique
$b\in S$ such that $aba=a$ and $bab=b$. The element $b$ is called the {\em inverse} of $a$ and is denoted by $a^{-1}$. If $(S, \cdot)$ is an inverse semigroup and $a\in S$, we define $a^+=aa^{-1}$ and $a^*=a^{-1}a$. Then $(S; \cdot\, , ^*, ^+)$ is a restriction semigroup (and thus an Ehresmann semigroup).

An Ehresmann semigroup possessing an identity element is called an {\em Ehresmann monoid}. 
		
Left Ehresmann semigroups and right Ehresmann semigroups are usually considered as $(2,1)$-algebras and Ehresmann semigroups as $(2,1,1)$-algebras.  
It is immediate from the definition that left Ehresmann semigroups, right Ehresmann semigroups, Ehresmann semigroups, left restriction semigroups, right restriction semigroups and restriction semigroups form varieties of algebras. Morphisms, congruences and subalgebras of all these algebras are taken with respect to their signatures.
		
Let $S$ be an Ehresmann semigroup. In view of  \eqref{eq:axioms_plus}, \eqref{eq:axioms_star} and \eqref{eq:axioms_common}, the set 
$$
P(S)=\{s^*\colon s\in S\}=\{s^+\colon s\in S\}
$$
is closed with respect to the multiplication. Furthermore, it is a semilattice and  $e^*=e^+=e$ holds for all $e\in P(S)$.  It is called the {\em semilattice of projections} of $S$ and its elements are called {\em projections}. A projection is necessarily an idempotent, but an Ehresmann semigroup may contain idempotents that are not projections. 
		
We will often use the following identities which can be easily derived from the definitions: 
\begin{equation}\label{eq:rule1}
\forall s\in S, e\in P(S)\colon \, (se)^* = s^*e , \,\,\, (es)^+ = es^+.
\end{equation}

The next identities hold in restriction semigroups and are often called the {\em left ample identity} and the {\em right ample identity}, respectively:
\begin{equation}\label{eq:mov_proj}
\forall s\in S, e\in P(S)\colon \, se = (se)^+s, \,\,\, es = s(es)^*.
\end{equation}
		
Let $S$ be an Ehresmann semigroup. For $a,b\in S$, we put:
\begin{itemize}
\item $a\leq_l b$ if there is $e\in P(S)$ such that $a=eb$;
\item $a\leq_r b$ if there is $e\in P(S)$ such that $a=be$;
\item $a\leq b$ if there are $e,f \in P(S)$ such that $a=ebf$.
\end{itemize}
		
The relations $\leq_l$, $\leq_r$ and $\leq$ are partial orders on $S$ and are called the {\em natural left partial order}, the {\em natural right partial order} and the {\em natural partial order}, respectively. Clearly $\leq {} = {} \leq_l \circ \leq_r  = {} \leq_r \circ \leq_l$ where $\circ$ stands for the product of relations. It is easy to see that $a\leq_l b$ holds if and only if $a=a^+b$ and, dually, $a\leq_r b$ holds if and only if $a=ba^*$. Restricted to $P(S)$, all the orders coincide and $e\leq f$, where $e,f\in P(S)$, holds if and only if $e=ef$. In addition, $P(S)$ is an order ideal, that is, $e\leq f$ where $f\in P(S)$, implies that $e\in P(S)$. Moreover, if $S$ is a restriction semigroup, all the orders coincide, too.

The following is an easy but useful observation.
\begin{lemma}\label{lem:monoid} An Ehresmann semigroup $S$ is a monoid if and only if it has a maximum projection. If this is the case, the maximum projection is the identity element of~$S$.
\end{lemma}

\begin{proof} Suppose that $S$ is an Ehresmann monoid with the identity element $1$. Denote $e=1^*$. Then $e=1e = 11^* = 1$. Hence $1$ is a projection. Since $1e=e1=e$ for all $e\in P(S)$, it is the maximum projection. Conversely, suppose that an Ehresmann semigroup $S$ has a maximum projection $1_{P(S)}$ and let $s\in S$. Then $s1_{P(S)} = ss^*1_{P(S)} = ss^* =s$ and similarly $1_{P(S)}s=s$. It follows that $S$ is a monoid with the identity element $1_{P(S)}$.
\end{proof}

It is easy to see that in a left Ehresmann semigroup $a\leq_l b$ implies $a^+\leq b^+$. Indeed let $a=a^+b$. Then $a^+=(a^+b)^+=(a^+b^+)^+=a^+b^+$. 
		
A {\em reduced Ehresmann semigroup} is an Ehresmann semigroup $S$ for which $|P(S)|=1$. Then, necessarily, $S$ is a monoid and $P(S)=\{1\}$ so that  \mbox{$s^*=s^+=1$} holds for all $s\in S$. On the other hand, any monoid $S$ can be endowed with the structure of an Ehresmann semigroup by putting $s^*=s^+= 1$ for all $s\in S$. Hence reduced Ehresmann semigroups can be identified with monoids. 
		
By $\sigma$ we denote the minimum congruence on an Ehresmann semigroup $S$ that identifies all elements of $P(S)$. Observe that if $\tau$ is a semigroup congruence that identifies all of $P(S)$ then $a\mathrel{\tau} b$ of course implies $a^+ \mathrel{\tau} b^+$ and $a^* \mathrel{\tau} b^*$. So semigroup congruences and $(2,1,1)$-congruences identifying all the projections coincide. Thus $S/\sigma$ is the maximum quotient of $S$ which is a reduced Ehresmann semigroup. The $\sigma$-class which contains $a\in S$ will be denoted by $[a]_{\sigma}$. If $a\leq b$ then clearly $a\mathrel{\sigma} b$. If $S$ is a restriction semigroup, each of the following statements is equivalent to $s \, {\mathrel{\sigma}}\, t$  (see \cite[Lemma 8.1]{G}):
\begin{enumerate}
\item[(i)] there is $e\in P(S)$ such that $es = et$; 
\item[(ii)] there is $e \in P(S)$ such that $se = te$.
\end{enumerate}
		 
A left (respectively right) restriction semigroup is called {\em proper} if $a^+=b^+$ (respectively $a^*=b^*$) and $a \mathrel{\sigma} b$ imply that $a=b$.
A restriction semigroup is {\em proper} if it is proper as a left and as a right restriction semigroup. 

\subsection{The monoid of binary relations on a set and its submonoids}\label{subs:binary_rel} 
Let $X$ be a non-empty set. By ${\mathcal B}(X)$ we denote the {\em monoid of binary relations} on $X$ with the operation of the composition of relations.  Let $\tau\in {\mathcal B}(X)$ and $x,y\in X$. We write $x\tau = \{z\in X\colon (x,z)\in \tau\}$ and  $\tau y = \{z\in X\colon (z,y)\in\tau\}$. In case where $x\tau =\{y\}$ we write $x\tau =y$ and similarly if $\tau y=\{x\}$ we write $\tau y = x$. By ${\mathrm{id}}_X=\{(x,x)\colon x\in X\}$ we denote the {\em identity relation} and by $\varnothing$ we denote the {\em empty relation}. The {\em reverse relation} $\tau^{-1}$ of the relation $\tau\in {\mathcal B}(X)$ is defined by $\tau^{-1} = \{(y,x)\colon (x,y)\in \tau\}$.  The {\em partial transformation monoid} ${\mathcal{PT}}(X)$ is a submonoid of the monoid ${\mathcal B}(X)$ consisting of all $\tau\in {\mathcal B}(X)$ such that  $|x\tau|\leq 1$ for all $x\in X$. Let ${\mathcal{PT}}^c(X)$ be the submonoid of ${\mathcal B}(X)$ consisting of all $\tau \in {\mathcal B}(X)$ such that $|\tau y|\leq 1$ for all $y\in X$. Clearly, ${\mathcal{PT}}^c(X)$ is anti-isomorphic to ${\mathcal{PT}}(X)$ via the map $\tau\mapsto \tau^{-1}$. The {\em symmetric inverse monoid} ${\mathcal I}(X)$ equals ${\mathcal{PT}}(X)\cap {\mathcal{PT}}^c(X)$. It consists of all $\tau\in {\mathcal B}(X)$ such that that  $|x\tau|\leq 1$  and $|\tau x|\leq 1$ for all $x\in X$. 

It is well known that ${\mathcal B}(X)$ is an Ehresmann monoid if one defines $\tau^+$ and $\tau^*$ by
$$\tau^+=\dom(\tau)=\{(x,x)\in X\times X\colon \exists y\in X \text{ such that }(x,y)\in \tau\},$$
$$\tau^*=\ran(\tau)=\{(y,y)\in X\times X\colon \exists x\in X \text{ such that }(x,y)\in \tau\}.$$ 
Note that $P({\mathcal B}(X)) = \{\tau\in {\mathcal B}(X)\colon \tau \subseteq id_X\}$, hence the semilattice $P({\mathcal B}(X))$ is isomorphic to the powerset of $X$ with respect to the operation of intersection of subsets. It follows that ${\mathcal{PT}}(X)$, ${\mathcal{PT}}^c(X)$ and ${\mathcal I}(X)$ are $(\cdot, ^+,^*,1)$-subalgebras of ${\mathcal B}(X)$. It is well known and easy to check that ${\mathcal{PT}}(X)$ is a left restriction monoid. Dually, ${\mathcal{PT}}^c(X)$ is a right restriction monoid. Furthermore, ${\mathcal{I}}(X)$ is a restriction monoid (of course, ${\mathcal{I}}(X)$ is also an inverse monoid).

In this paper we consider ${\mathcal B}(X)$, ${\mathcal{PT}}(X)$, ${\mathcal{PT}}^c(X)$ and ${\mathcal I}(X)$ equipped with the partial order $\subseteq$ of inclusion of 
relations. It is known and easy to check that this partial order is compatible with the multiplication. The inclusion order on each of ${\mathcal{PT}}(X)$, ${\mathcal{PT}}^c(X)$ and ${\mathcal I}(X)$ coincides with the natural partial order. However, the inclusion order on ${\mathcal B}(X)$ contains the natural partial order $\leq$ but does not coincide with it. For example, if $X=\{1,2,3,4\}$, $\tau = \{(1,3), (1,4),(2,3),(2,4)\}$ and $\mu = \{(1,3), (2,4)\}$ then $\mu\subseteq \tau$ but $\mu\not\leq \tau$. 

\section{The structure of proper Ehresmann semigroups}\label{s:structure}
The goal of this section is to define proper Ehresmann semigroups and show that they arise naturally from labelled directed graphs with vertices indexed by elements of a semilattice  and edges labelled by elements of a monoid, with an additional structure called compatible restrictions and corestrictions. 

\subsection{Matching factorizations} Let $S$ be an Ehresmann semigroup. 

A {\em matching factorization} of an element $s\in S$ is a tuple $(s_1,\dots, s_n)$, where $n\in {\mathbb N}$ and $s_1,\dots, s_n\in S$ are such that 
$s=s_1\cdots s_n$ and $s_i^* = s_{i+1}^+$ for all $i=1, \dots, n-1$. 
 We say that a matching factorization $(s_1,\dots, s_n)$ of $s\in S$ has $n$ factors.

We record several easy properties of matching factorizations.

\begin{lemma}\label{lem:matching1}
Let  $(s_1,\dots, s_n)$ be a matching factorization of $s\in S$. Then $s^+ = s_1^+$ and $s^* = s_n^*$.
\end{lemma}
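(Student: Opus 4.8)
The plan is to prove the first identity $s^+ = s_1^+$ by induction on $n$, and then to obtain the second identity $s^* = s_n^*$ by the dual argument. The base case $n=1$ is immediate, since then $s = s_1$.

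For the inductive step I would peel off the first factor. Set $u = s_2\cdots s_n$; this is itself a matching factorization of length $n-1$, so by the induction hypothesis $u^+ = s_2^+$. The matching condition gives $s_1^* = s_2^+ = u^+$, and since $u^+\in P(S)$ we may use the identity $xx^*=x$ from \eqref{eq:axioms_star} to rewrite the product as $s_1 u^+ = s_1 s_1^* = s_1$. Applying the third identity $(xy)^+ = (xy^+)^+$ of \eqref{eq:axioms_plus} with $x=s_1$ and $y=u$ then yields
$$
s^+ = (s_1 u)^+ = (s_1 u^+)^+ = (s_1)^+ = s_1^+,
$$
which completes the induction.

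The claim $s^* = s_n^*$ follows by the symmetric computation, peeling off the last factor instead. Writing $v = s_1\cdots s_{n-1}$, the dual induction hypothesis gives $v^* = s_{n-1}^*$, the matching condition gives $v^* = s_{n-1}^* = s_n^+$, the identity $x^+x = x$ from \eqref{eq:axioms_plus} gives $v^* s_n = s_n^+ s_n = s_n$, and the third identity $(xy)^* = (x^*y)^*$ of \eqref{eq:axioms_star} yields $s^* = (v s_n)^* = (v^* s_n)^* = s_n^*$.

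I do not expect any genuine obstacle here; the argument is a short induction. The two points requiring a little care are that the truncated product $u$ (respectively $v$) is again a matching factorization, so that the induction hypothesis is applicable, and that one invokes the connecting equality $s_1^* = s_2^+$ (respectively $s_{n-1}^* = s_n^+$) at exactly the right moment to collapse $s_1 u^+$ into $s_1 s_1^*$ (respectively $v^* s_n$ into $s_n^+ s_n$).
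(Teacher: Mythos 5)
Your proof is correct and follows essentially the same route as the paper's: induction on $n$, peeling off the first factor, using $(s_1u)^+=(s_1u^+)^+$ together with $u^+=s_2^+=s_1^*$ to collapse to $s_1^+$, and the dual argument for $s^*$. No issues.
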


\begin{proof} We apply induction on $n$. If $n=1$, there is nothing to prove. Suppose that $n\geq 2$ and that the statement holds for factorizations  into $n-1$ factors. We then have
$s^+ = (s_1s_2\cdots s_n)^+  = (s_1(s_2\cdots s_n)^+)^+ = (s_1s_2^+)^+ = (s_1s_1^*)^+ = s_1^+$ and similarly $s^* = s_n^*$. 
\end{proof}

\begin{lemma}\label{lem:matching2} Let $(s_1,\dots, s_n)$ be a matching factorization of $s\in S$ and let $e\leq s^*=s_n^*$. Put $s_n' = s_ne$, $s_{n-1}' = s_{n-1}(s_n')^+$, $\dots,$ $s_1' = s_1(s_2')^+$. Then $(se)^* = e$ and $(s_1',\dots, s_n')$ is a matching factorization of $se$.
\end{lemma}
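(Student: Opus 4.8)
The statement bundles three assertions: that $(se)^*=e$, that the new factorization $se=s_1'\cdots s_n'$ holds, and that it is matching. The plan is to read $e$ as a projection below $s^*$ (forced by the conclusion $(se)^*=e\in P(S)$), dispatch the first assertion immediately, then establish the matching property, and only afterwards verify the product identity, since the latter will lean on Lemma~\ref{lem:matching1} applied to the already-established matching factorization. For the first assertion, the right-hand identity in \eqref{eq:rule1} gives $(se)^*=s^*e$, and since $e\le s^*$ in $P(S)$ we have $s^*e=e$, so $(se)^*=e$.

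The crucial preliminary observation is a monotonicity fact: if $a=bg$ with $g\in P(S)$, then $a^+\le b^+$. I would prove this via the characterization of $a^+$ as the least projection that fixes $a$ on the left: from $b^+a=b^+bg=bg=a$ and the minimality of $a^+$ (which follows from $(pa)^+=pa^+$ in \eqref{eq:rule1}) one gets $a^+\le b^+$. Each new factor has the shape $s_j'=s_jg_j$ with $g_j=(s_{j+1}')^+$ for $j<n$ and $g_n=e$, both projections, so this observation yields $(s_j')^+\le s_j^+$ for every $j$. The matching property is then a short computation: using the right-hand identity of \eqref{eq:rule1}, the original matching $s_i^*=s_{i+1}^+$, and the monotonicity just obtained,
\[
(s_i')^*=\bigl(s_i(s_{i+1}')^+\bigr)^*=s_i^*(s_{i+1}')^+=s_{i+1}^+(s_{i+1}')^+=(s_{i+1}')^+,
\]
the last equality because $(s_{i+1}')^+\le s_{i+1}^+$ are projections.

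For the product identity I would argue by downward induction that $s_k'\cdots s_n'=s_k\cdots s_n e$ for each $k$, the case $k=n$ being the definition $s_n'=s_ne$. For the inductive step I would peel off the factor $(s_{k+1}')^+$ inside $s_k'=s_k(s_{k+1}')^+$ and absorb it into the tail: since $s_{k+1}'\cdots s_n'$ is matching, Lemma~\ref{lem:matching1} gives it domain projection $(s_{k+1}')^+$, so $(s_{k+1}')^+(s_{k+1}'\cdots s_n')=s_{k+1}'\cdots s_n'$, whence $s_k'\cdots s_n'=s_k(s_{k+1}'\cdots s_n')=s_k\cdots s_ne$ by the induction hypothesis. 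Taking $k=1$ gives $s_1'\cdots s_n'=se$. I expect the only real subtlety to be the ordering of the argument: the monotonicity fact $(s_{i+1}')^+\le s_{i+1}^+$ is what makes the matching property hold, and the matching property is in turn what licenses the use of Lemma~\ref{lem:matching1} to collapse the internal projections in the telescoping product; everything else is routine manipulation with \eqref{eq:rule1}.
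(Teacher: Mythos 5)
Your proof is correct; every step checks out against the axioms and \eqref{eq:rule1}. It is, however, organized differently from the paper's argument, which is a single induction on $n$: the paper applies the inductive hypothesis to the truncated matching factorization $s_1\cdots s_{n-1}$ with the projection $(s_n')^+\leq s_n^+=s_{n-1}^*$, which simultaneously produces the factors $s_1',\dots,s_{n-1}'$ and the identity $s_1\cdots s_{n-1}(s_n')^+=s_1'\cdots s_{n-1}'$, so that only the one junction computation $(s_{n-1}')^*=s_{n-1}^*(s_n')^+=(s_n')^+$ remains. You instead verify the matching condition directly at every index, via the monotonicity observation that $a=bg$ with $g\in P(S)$ forces $a^+\leq b^+$ (sound: $b^+a=a$ gives $a^+=(b^+a)^+=b^+a^+$ by \eqref{eq:rule1}), and then prove the product identity by a separate telescoping downward induction. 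Both arguments hinge on the same crux identity $(s_i')^*=s_i^*(s_{i+1}')^+=(s_{i+1}')^+$; yours trades one structural induction for two elementary passes, which makes the logical dependencies more explicit at the cost of a little length. One small simplification is available: in the telescoping step you do not need Lemma~\ref{lem:matching1} at all, since $(s_{k+1}')^+s_{k+1}'=s_{k+1}'$ is already the first identity of \eqref{eq:axioms_plus}, so the product identity is in fact independent of the matching property and your worry about the ordering of the two parts evaporates.
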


\begin{proof} Note that $e\leq s^*$ implies that $e\in P(S)$. We apply induction on $n$. If $n=1$, there is nothing to prove. Suppose that $n\geq 2$ and that the statement holds for factorizations  into $n-1$ factors.  Note that, in view of \eqref{eq:rule1} and \eqref{eq:axioms_plus}, we have $s_n^+(s_ne)^+ = (s_n^+s_ne)^+ = (s_ne)^+$. It follows that $(s_n')^+\ = (s_ne)^+ \leq s_n^+ = s_{n-1}^*$. By the induction hypothesis we have that $(s_1',\dots, s_{n-1}')$ is a matching factorization of $t=s_1\cdots s_{n-1}(s_n')^+$. 
Since, using \eqref{eq:rule1}, $(s_{n-1}')^* = (s_{n-1}(s_n')^+)^* = (s_{n-1}')^* (s_n')^+ = (s_n')^+$, it follows that $(s_1',\dots, s_n')$ is a matching factorization of $se$. In addition, $(se)^*  = (s_ne)^* = s_n^*e = e$.
\end{proof}

\begin{lemma}\label{lem:matching3}
Let $s=s_1\cdots s_n$. Then there are $s_1'\leq s_1$, $\dots$, $s_n'\leq s_n$ such that $(s_1',\dots, s_n')$ is a matching factorization of $s$.
\end{lemma}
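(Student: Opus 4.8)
The plan is to argue by induction on the number $n$ of factors, using Lemma~\ref{lem:matching2} as the main engine: that lemma lets me right-restrict an already matching factorization along a projection $e \le s^*$, propagating the restriction leftward while keeping the factorization matching. The base case $n=1$ is vacuous (take $s_1' = s_1$). For the inductive step I would split off the last factor, writing $s = (s_1\cdots s_{n-1})\,s_n$, and first apply the induction hypothesis to the shorter product $s_1\cdots s_{n-1}$ to obtain a matching factorization $s_1\cdots s_{n-1} = u_1\cdots u_{n-1}$ with $u_i \le s_i$. Writing $u = u_1\cdots u_{n-1}$, Lemma~\ref{lem:matching1} gives $u^* = u_{n-1}^*$, so all that remains is to reconcile the single junction between $u$ and $s_n$.

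The key local observation is how to fix one junction between two factors $a,b$ by shrinking both to a common projection. Setting $f = a^* b^+ \in P(S)$ (a meet of projections, hence $f \le a^*$ and $f \le b^+$), the elements $a' = af$ and $b' = fb$ satisfy $a' \le a$, $b' \le b$, and, using \eqref{eq:rule1}, $(a')^* = a^* f = f$ and $(b')^+ = f b^+ = f$, so that $(a')^* = (b')^+$; moreover $a'b' = afb = a\,a^*b^+\,b = ab$, since $aa^* = a$ and $b^+ b = b$. In the inductive step I would apply this with $a = u$ and $b = s_n$: put $f = u^* s_n^+$ and $s_n' = f s_n \le s_n$, so that $(s_n')^+ = f s_n^+ = f$ by \eqref{eq:rule1}. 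Because $f \le u^*$, Lemma~\ref{lem:matching2} applied to the matching factorization $u = u_1\cdots u_{n-1}$ with $e = f$ yields a matching factorization $uf = u_1'\cdots u_{n-1}'$ with $u_i' \le u_i \le s_i$ and $(u_{n-1}')^* = (uf)^* = f$.

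It then remains to check three things about $u_1'\cdots u_{n-1}'\, s_n'$: that it is matching, that every factor lies below the corresponding $s_i$, and that its product is $s$. Matching inside $u_1'\cdots u_{n-1}'$ is given by Lemma~\ref{lem:matching2}, and at the new junction $(u_{n-1}')^* = f = (s_n')^+$; transitivity of $\le$ gives $u_i' \le s_i$ and $s_n' \le s_n$; and the product is $(uf)(fs_n) = u f s_n = u\,u^* s_n^+\,s_n = u s_n = s$, again using $uu^* = u$ and $s_n^+ s_n = s_n$. I expect the only real subtlety to be getting the direction of propagation right: the restriction forced at the $u$--$s_n$ junction must be pushed all the way through the first $n-1$ factors without disturbing their internal matching, and this is precisely what Lemma~\ref{lem:matching2} delivers, so the induction closes cleanly.
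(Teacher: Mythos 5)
Your proof is correct and follows essentially the same route as the paper's: induction on $n$, splitting off the last factor, applying the inductive hypothesis to $s_1\cdots s_{n-1}$, and using Lemma~\ref{lem:matching2} to propagate the junction projection $f = u^*s_n^+$ leftward (your $s_n' = fs_n$ coincides with the paper's $t^*s_n$ since $u^*s_n^+s_n = u^*s_n$). No gaps.
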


\begin{proof} We apply induction on $n$. If $n=1$ then $s=s_1$ is trivially a matching factorization. Suppose that $n\geq 2$ and that the statement holds for factorizations  into $n-1$ factors. Let  $t=s_1\cdots s_{n-1}$, $s_n' = t^*s_n$ and $e= t^*s_n^+$. Then $s= ts_n = (ts_n^+) s_n'=tes_n'$ where
$(ts_n^+)^* = (s_n')^+ = e$. By the induction hypothesis there are $t_1\leq s_1$, $\dots,$ $t_{n-1}\leq s_{n-1}$ such that $(t_1,\dots, t_{n-1})$ is a matching factorization of $t$.
Because $t_{n-1}^* = t^*$ by Lemma \ref{lem:matching1}, we have $e=t^*s_n^+ = t_{n-1}^*s_n^+\leq t_{n-1}^*$.
By Lemma \ref{lem:matching2} there are $s_1'\leq t_1$, $\dots,$ $s_{n-1}'\leq t_{n-1}$ such that $(s_1',\dots, s_{n-1}')$ is a matching factorization of $te$. Since $e\leq t^*$, it follows from Lemma \ref{lem:matching1} that $(s_{n-1}')^* = (te)^* = t^*e= e = (s_n')^+$. Hence $(s_1',\dots, s_{n}')$ is a matching factorization of $s$ with $s_1'\leq s_1$, $\dots,$ $s_n'\leq s_n$, as required.
\end{proof}

\subsection{Proper Ehresmann semigroups} Let $S$ be an Ehresmann semigroup. Recall that by $\sigma$ we denote the minimum congruence on $S$ that identifies all the elements of $P(S)$.
\begin{definition}[Proper elements]\label{def:proper_elem} An element $s\in S$ will be called {\em proper}, if 
$$
\forall t\in S\colon t^+=s^+, \, t^*=s^* \text{ and } t\mathrel{\sigma} s \text{ imply that } t=s.
$$
\end{definition}

In other words, an element  $s$ is  proper  if it is uniquely determined by $s^+$, $s^*$ and $[s]_{\sigma}$.

If $(s_1, \dots, s_n)$ is a matching factorization where all $s_i$ are proper (or all $s_i$ belong to some subset $Y$ of $S$, etc.), we will say that $(s_1, \dots, s_n)$ is a matching factorization of $s$ {\em into a product of proper elements} (resp. {\em into a product of elements of $Y$}, etc.).

Let $s\in S$ and $(s_1, \dots, s_n)$ be a matching factorization of $s$ into a product of  proper elements. If there are $i,j\in \{1,2,\dots, n\}$ where $i\leq j$ such that $s_i\cdots s_j = t$ is a proper element then $(s_1,\dots, s_{i-1}, t, s_{j+1} ,\dots, s_n)$ is also a matching factorization of $s$ into a product of proper elements. In this case we write $(s_1,\dots, s_n) \to (s_1,\dots, s_{i-1}, t, s_{j+1}, \dots, s_n)$. Clearly the relation $\to$ is reflexive. Let $\equiv$ be its symmetric and transitive closure. 

We say that two matching factorizations $(s_1,\dots, s_n)$ and $(t_1,\dots, t_k)$ of $s$ into products of proper elements are {\em equivalent} if $(s_1,\dots, s_k) \equiv (t_1,\dots, t_k)$, that is, if the factorization  $(t_1,\dots, t_k)$ can be obtained from the factorization $(s_1,\dots, s_k)$ by a finite number of applications of the relation $\to$ or its inverse relation.

\begin{definition}[Proper Ehresmann semigroups] \label{def:proper} An Ehresmann semigroup $S$ is called {\em proper} if there is an order ideal $Y$ of $S$, with respect to the natural partial order, called a {\em proper generating ideal} of $S$, such that
\begin{enumerate}
\item $P(S)\subseteq Y$;
\item  all elements of $Y$ are proper; 
\item each $s\in S$ admits a unique, up to equivalence, matching factorization as a product of elements of $Y$.
\end{enumerate}
\end{definition}

We say that $S$ is {\em strictly proper} if all of its elements are proper. It is easy to see that a strictly proper Ehresmann semigroup is proper (in Definition \ref{def:proper} one just takes $Y=S$). 

It is immediate that a proper restriction semigroup $S$ is strictly proper as an Ehresmann semigroup. It follows that proper and strictly proper Ehresmann semigroups generalize proper restriction semigroups.

\subsection{Labelled directed graphs with compatible restrictions and corestrictions and their attached categories of paths}\label{subs:categories}

\begin{definition}[Labelled directed graphs] \label{def:labelled_graph_sets}  
Let $X$ be a non-empty set, $T$ a monoid with the identity element $1$ and ${\mathcal G}$ a labelled directed graph with vertex set $X$ and edges labelled by elements of $T$. The edge set of ${\mathcal G}$ is denoted by ${\mathrm E}({\mathcal G})$. For $c\in {\mathrm E}({\mathcal G})$ by ${\mathbf{d}}(c),{\mathbf{r}}(c)\in X$ and $t={\mathbf{l}}(c)\in T$ we denote its initial vertex, terminal vertex and label, respectively. We assume that for every $x,y\in X$ and $t\in T$ there is at most one edge in ${\mathcal G}$ with initial vertex $x$, final vertex $y$ and label $t$, that is, every edge $c\in  {\mathrm E}({\mathcal G})$ is uniquely determined by  ${\mathbf{d}}(c),{\mathbf{r}}(c)$ and ${\mathbf{l}}(c)$. If ${\mathcal G}$ has an edge $c$ with initial vertex $x$, terminal vertex $y$ and label $t$, we denote this edge by $(x,t,y)$. We require that for each $x\in X$ there is an edge $(x,1,x)\in {\mathrm E}({\mathcal G})$. 
\end{definition} 

By the {\em label} of a path in ${\mathcal G}$ we mean the product of labels of the consequtive edges of this path.

\begin{definition}\label{def:labelled_graph} (Labelled directed graphs with compatible restrictions and corestrictions) Let now $E$ be a semilattice, $T$ a monoid with the identity element $1$ and ${\mathcal G}$ a labelled directed graph with vertex set $E$ and edges labelled by elements of $T$ (see Definition \ref{def:labelled_graph_sets}). We assume that for every $t\in T$ there is a path in ${\mathcal G}$ labelled by $t$. We say that ${\mathcal G}$ has {\em restrictions} if for any $(e,t,f)\in {\mathrm E}({\mathcal G})$ and $g\leq e$ there exists $_{g}|(e,t,f)\in  {\mathrm E}({\mathcal G})$, called the {\em restriction of} $(e,t,f)$ {\em to} $g$ such that  the following axioms hold:
\begin{itemize}
\item[(R1)] \label{i:restriction1}  If $(e,t,f)\in {\mathrm E}({\mathcal G})$ and $g\leq e$ then $_{g}|(e,t,f) = (g, t, f')$ for some $f'\leq f$.
\item[(R2)] \label{i:restriction2} If $(e,t,f)\in {\mathrm E}({\mathcal G})$ then $_{e}|(e,t,f) = (e,t,f)$.
\item[(R3)] \label{i:restriction3} If  $(e,t,f)\in {\mathrm E}({\mathcal G})$ and $h\leq g\leq e$ then $_h|(_g|(e,t,f)) = {}_h|(e,t,f)$.
\item[(R4)] \label{i:restriction4} Let $n\geq 2$ and $p_1=(e_0,t_1,e_1), \dots, p_n=(e_{n-1},t_n,e_n) \in {\mathrm E}({\mathcal G})$ be such that $(e_0,t_1\cdots t_n,e_n) \in  {\mathrm E}({\mathcal G})$. Let also $e_0'\leq e_0$. Denote $e_{1}' = {\mathbf r}(_{e_0'}|p_1),$ $e_{2}' = {\mathbf r}(_{e_1'}|p_{2}),\dots,$ $e_n' = {\mathbf r}(_{e_{n-1}'}|p_n).$
Then $_{e_0'}|(e_0,t_1\cdots t_n,e_n)=(e_0', t_1\cdots t_n, e_n')$.
\item[(R5)]\label{i:restriction5} For any $e,f\in E$ such that $f\leq e$ we have $_f|(e,1,e) = (f,1,f)$.
\end{itemize}
    
Dually, we say that ${\mathcal G}$ has {\em corestrictions} if for any $(e,t,f)\in {\mathrm E}({\mathcal G})$  and $g\leq f$ there exists $(e,t,f)|_g\in {\mathrm E}({\mathcal G})$, called the {\em corestriction of} $(e,t,f)$ {\em to} $g$ such that following axioms hold:
\begin{itemize}
\item[(CR1)] \label{i:corestriction1}  If $(e,t,f)\in {\mathrm E}({\mathcal G})$ and $g\leq f$ then $(e,t,f)|_g = (e', t, g)$ for some $e'\leq e$.
\item[(CR2)] \label{i:corestriction2} If $(e,t,f)\in {\mathrm E}({\mathcal G})$ then $(e,t,f)|_f = (e,t,f)$.
\item[(CR3)] \label{i:corestriction3} If  $(e,t,f)\in {\mathrm E}({\mathcal G})$ and $h\leq g\leq f$ then $((e,t,f)|_g)|_h = (e,t,f)|_h$.
\item[(CR4)] \label{i:corestriction4} Let $n\geq 2$ and $p_1=(e_0,t_1,e_1), \dots, p_n=(e_{n-1},t_n,e_n) \in {\mathrm E}({\mathcal G})$ be such that $(e_0,t_1\cdots t_n,e_n) \in  {\mathrm E}({\mathcal G})$. Let also $e_n'\leq e_n$. Denote $e_{n-1}' = {\mathbf d}(p_n|_{e_n'}),$ $e_{n-2}' = {\mathbf d}(p_{n-1}|_{e_{n-1}'}),\dots,$ $e_0' = {\mathbf d}(p_1|_{e_1'}).$
Then $(e_0,t_1\cdots t_n,e_n)|_{e_n'}=(e_0', t_1\cdots t_n, e_n')$.
\item[(CR5)] For any $e,f\in E$ such that $f\leq e$ we have $(e,1,e)|_f = (f,1,f)$.
\end{itemize}
    
Suppose that ${\mathcal G}$ has restrictions and corestrictions. We say that it has {\em compatible restrictions and corestrictions} if the following axiom holds:
   
\begin{itemize}
\item[(C)] Let $c=(e,t,f)\in {\mathrm E}({\mathcal G})$ and $g\leq e$, $h\leq f$. Then 
$$(_{g}|c)|_{{\mathbf r}(_{g}|c)h} = \space{} _{{\mathbf d}(c|_h)g}|(c|_h) 
= \left(g {\mathbf d}(c|_h), t, {\mathbf r}(_{g}|c)h\right).
$$
\end{itemize}
\end{definition}

The following diagrams illustrate the operations of restriction and corestriction. 
\begin{center}
\begin{tikzpicture}
\tikzset{edge/.style = {->,> = latex'}}
\node (A) at (0,1.6) {$e$};
\node (B) at (2,1.6) {$f$};
\node (C) at (0,0) {$g$};
\node (D) at (2,0) {$f'$};
\node (E) at (6,1.6) {$e$};
\node (F) at (8,1.6) {$f$};
\node (G) at (6,0) {$e'$};
\node (H) at (8,0) {$h$};
\node at (1,2) {$(e,t,f)$}; 
\node at (1,-0.4) {${_g|(e,t,f)}$};
\node at (7,2) {$(e,t,f)$}; 
\node at (7,-0.4) {$(e,t,f)|_h$}; 
\node at (0,0.8) {\vertleq};
\node at (2,0.8) {\vertleq};
\node at (6,0.8) {\vertleq};
\node at (8,0.8) {\vertleq};
\draw [edge, thick]  (A) -- (B) ;
\draw [edge, thick]  (E) -- (F) ;
\draw [edge, thick]  (G) -- (H) ;
\draw [edge, thick]  (C) -- (D) ;
\end{tikzpicture}
\end{center}

From now till the end of this section let $E$ be a semilattice, $T$ a monoid and ${\mathcal G}$ a labelled directed graph with vertex set $E$ and edges labelled by elements $T$ which has compatible restrictions and corestrictions. By ${{\mathcal C}}({\mathcal G})$ we denote the {\em path semicategory} of ${\mathcal G}$. By definition, the objects of ${{\mathcal C}}({\mathcal G})$ are elements of $E$ and its arrows are non-empty (labelled) directed paths in ${\mathcal G}$.  If $p$ is a path in ${\mathcal G}$ with consecutive edges $(e_0, t_1, e_1), \dots, (e_{n-1}, t_n, e_n)$, $n\geq 1$, we sometimes denote it by $p=(e_0, t_1, e_1, \dots, e_{n-1}, t_n, e_n)$. We say that $n$ is the {\em length} of the path $p$. We  put $e_0={\mathbf d}(p)$ and $e_n={\mathbf r}(p)$ to be the initial and the terminal vertices of $p$, respectively, and ${\mathbf l}(p) = t_1\cdots t_n$ to be the label of $p$. Paths $p$ and $q$\ are {\em composable} if ${\mathbf r}(p) = {\mathbf d}(q)$, in which case their product is their concatenation, denoted $pq$.  

We extend the definition of restriction and corestriction from edges of ${\mathcal G}$ to non-empty directed paths in ${\mathcal G}$ recursively as follows: let $p=(e_0, t_1, e_1, \dots, e_{n-1}, t_n, e_n)$ be a path and put $p_1=(e_0, t_1, e_1)$, $\dots,$ $p_n=(e_{n-1}, t_n, e_n)$. 

\begin{enumerate}
\item[(RPath)] Let $e_0'\leq e_0$. We put $e_{1}' = {\mathbf r}(_{e_0'}|p_1),$ $e_{2}' = {\mathbf r}(_{e_{1}'}|p_{2}),$ $\dots,$ $e_n' = {\mathbf r}(_{e_{n-1}'}|p_n).$
We then define $_{e_0'}|p = (e_0',t_1,e_1',t_2,e_2',\dots, e_{n-1}', t_n, e_n')$.
\item[(CRPath)] Let $e_n'\leq e_n$. We put $e_{n-1}' = {\mathbf d}(p_n|_{e_n'}),$ $e_{n-2}' = {\mathbf d}(p_{n-1}|_{e_{n-1}'}),$ $\dots,$ $e_0' = {\mathbf d}(p_1|_{e_1'}).$ We then define $p|_{e_n'}=
(e_0',t_1,e_1',t_2,e_2',\dots, e_{n-1}', t_n, e_n')$.
\end{enumerate}

In the following lemma we extend axioms (R1)--(R5), (CR1)--(CR5) and (C) from edges of ${\mathcal G}$ to non-empty paths in ${\mathcal G}$.

\begin{lemma}\label{lem:axioms_paths}
Let $p\in {{\mathcal C}}({\mathcal G})$ and $e\leq {\mathbf d}(p), f\leq {\mathbf r}(p)$. Then:
\begin{enumerate}
\item[${\mathrm{(R1a)}}$]\label{i:paths1} ${\mathbf r}(_e|p)\leq {\mathbf r}(p)$;
\item[${\mathrm{(R2a)}}$]\label{i:paths2} $_{{\mathbf d}(p)}|p = p$;
\item[${\mathrm{(R3a)}}$]\label{i:paths3} if $g\leq e$ then $_g|(_e|p) = {} _g|p$;
\item[${\mathrm{(R4a)}}$]\label{i:paths4} if $q\in {{\mathcal C}}({\mathcal G})$ is such that ${\mathbf r}(p) = {\mathbf d}(q)$ then $_e|(pq) = {}_e|p \,\, {}_{{\mathbf r}(_e|p)}|q$;
\item[${\mathrm{(R5a)}}$]\label{i:paths5} if $p=(e,1,e)^n$ where $n\geq 1$ and $g\leq e$ then $_g|p = (g,1,g)^n$;
\item[${\mathrm{(CR1a)}}$]\label{i:paths1c} ${\mathbf d}(p|_f)\leq {\mathbf d}(p)$;
\item[${\mathrm{(CR2a)}}$]\label{i:paths2c} $p|_{{\mathbf r}(p)} = p$;
\item[${\mathrm{(CR3a)}}$]\label{i:paths3c} if $g\leq f$ then $(p|_f)|_g = {} p|_g$;
\item[${\mathrm{(CR4a)}}$]\label{i:paths4c}  if $q\in {{\mathcal C}}({\mathcal G})$ is such that ${\mathbf r}(p) = {\mathbf d}(q)$ and $g\leq {\mathbf r}(q)$ then $(pq)|_g = p|_{{\mathbf d}(q|_g)} \,\, q|_g$;
\item[${\mathrm{(CR5a)}}$]\label{i:paths5c} if $p=(e,1,e)^n$ where $n\geq 1$ and $g\leq e$ then $p|_g = (g,1,g)^n$;
\item[${\mathrm{(Ca)}}$] \label{i:paths_comp} $(_e|p)|_{{\mathbf r}(_e|p)f} = \space{} _{{\mathbf d}(p|_f)e}|(p|_f)$.
\end{enumerate} 
\end{lemma}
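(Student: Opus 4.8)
The plan is to prove all eleven items by induction on the length $n$ of the path $p$, the base case $n=1$ being exactly the edge axioms (R1)--(R5), (CR1)--(CR5) and (C). I will treat only the restriction items (R1a)--(R5a) and the compatibility item (Ca) in detail; the corestriction items (CR1a)--(CR5a) follow by the left--right dual argument, reading (CRPath) in place of (RPath) and the starred axioms in place of the unstarred ones.

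First I would dispose of the items that are essentially immediate from the recursive definition (RPath). Since (RPath) computes ${}_{e_0'}|p$ by restricting the consecutive edges from left to right, monotonicity (R1) applied at each step gives $e_i'\le e_i$ for all $i$; in particular this shows that the recursion is well defined (each intermediate restriction is legal because $e_{i-1}'\le e_{i-1}={\mathbf d}(p_i)$) and yields (R1a) for the terminal vertex $e_n'$. Item (R2a) follows by taking $e_0'={\mathbf d}(p)$ and noting that (R2) forces $e_i'=e_i$ at every step, and (R5a) follows from (R5) edge by edge. Item (R4a) is nothing more than splitting the left-to-right recursion of (RPath) at the junction between $p$ and $q$: restricting $pq$ from $e$ first exhausts the edges of $p$, reaching the vertex ${\mathbf r}({}_e|p)$, and then restricts the edges of $q$ starting from that vertex, which is by definition ${}_{{\mathbf r}({}_e|p)}|q$.

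For (R3a), with $g\le e\le {\mathbf d}(p)$, I would induct on $n$, writing $p=p'c$ with $c$ the last edge. Using (R4a) twice together with the inductive hypothesis ${}_g|({}_e|p')={}_g|p'$, the computation reduces to showing ${}_{{\mathbf r}({}_g|p')}|({}_{{\mathbf r}({}_e|p')}|c)={}_{{\mathbf r}({}_g|p')}|c$, which is the edge axiom (R3) applied to $c$. Its hypothesis requires ${\mathbf r}({}_g|p')\le {\mathbf r}({}_e|p')$, and this monotonicity of the terminal vertex in the restriction parameter is itself supplied by combining the inductive instance of (R3a) with (R1a), namely ${}_g|({}_e|p')={}_g|p'$ together with ${\mathbf r}({}_g|({}_e|p'))\le {\mathbf r}({}_e|p')$.

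The \emph{main obstacle} is (Ca). Here I would again induct on $n$ and peel off the last edge $p=p'c$ with $c=(a,t,b)$. On the left-hand side, (R4a) gives ${}_e|p=({}_e|p')({}_u|c)$ with $u={\mathbf r}({}_e|p')$, and then (CR4a) splits the corestriction $({}_e|p)|_{{\mathbf r}({}_e|p)f}$ into an initial part applied to ${}_e|p'$ and a final part applied to the edge ${}_u|c$. Symmetrically, on the right-hand side (CR4a) gives $p|_f=(p'|_{a''})(c|_f)$ with $a''={\mathbf d}(c|_f)$, and (R4a) splits ${}_{{\mathbf d}(p|_f)e}|(p|_f)$ into an initial part applied to $p'|_{a''}$ and a final part applied to $c|_f$. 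The crux is then twofold. First, the two initial parts are precisely the two sides of the inductive instance of (Ca) for the shorter path $p'$, with restriction parameter $e$ and corestriction parameter $a''$, so they agree by the inductive hypothesis; and since corestriction fixes the terminal vertex, comparing ranges shows that the restriction parameter feeding into the final edge on the right-hand side equals $u a''$. Second, the two final edges coincide: applying the edge compatibility axiom (C) to $c$ with $g=u$ and $h=f$ gives $({}_u|c)|_{{\mathbf r}({}_u|c)f}={}_{u a''}|(c|_f)=(u a'',t,{\mathbf r}({}_u|c)f)$. Matching the initial parts and the final edges then yields (Ca). The only delicate point throughout is the careful tracking of the intermediate vertices so that each invocation of (R4a), (CR4a), (C) and the inductive hypothesis has its hypotheses met.
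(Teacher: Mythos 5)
Your proposal is correct and follows essentially the same route as the paper: the easy items are read off directly from the recursive definitions (RPath)/(CRPath) and the edge axioms, (R4a)/(CR4a) come from splitting the recursion at the junction of the two paths, and (Ca) is proved by induction on the length of $p$, peeling off the last edge, applying the inductive hypothesis to the initial segment and the edge axiom (C) to the last edge, with (R4a) and (CR4a) gluing the pieces together. Your bookkeeping of the intermediate vertices (in particular that the corestriction parameter reaching the initial segment on the left is ${\mathbf r}({}_e|p')\,{\mathbf d}(c|_f)$, matching the inductive instance) agrees with the paper's equations for $e_i'$ and $e_i''$.
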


Remark that (R4a) and (CR4a) can be extended to products of more than two paths.

\begin{proof}  (R4a) Let $p= p_1\cdots p_k$, $q=p_{k+1}\cdots p_n$ where $p_i = (e_{i-1},t_{i}, e_i)$ for all $i=1,\dots, n$. Put $e_0'=e$, $e_1' = {\mathbf r}(_{e_0'}|p_1)$, $\dots$, $e_n' = {\mathbf r}(_{e_{n-1}'}|p_{n})$. By (RPath) we have that $_e|(pq) = (e_0', t_1, e_1',\dots, e_{n-1}', t_n, e_n')$, and also $$
_e|p = (e_0', t_1, e_1',\dots, e_{k-1}', t_k, e_k'), \,\,\, {}_{{\mathbf r}(_e|p)}|q = 
{}_{e_k'}|q = (e_k', t_{k+1}, e_{k+1}', \dots, e_{n-1}', t_n, e_n').$$ 
The equality $_e|(pq) = {}_e|p \,\, {}_{{\mathbf r}(_e|p)}|q$ follows.

All other items (excluding (Ca)) can be proved easier or similarly, and we leave the details to the reader.

We finally prove (Ca). If $p$ has length $1$, then the statement holds by (C). 
Suppose that $n\geq 2$ and that (Ca) holds for all paths of length $n-1$. 
Denote $p=rp_n$ where $r=(e_0,t_1,e_1,\dots, e_{n-2}, t_{n-1}, e_{n-1})$ and $p_n = (e_{n-1}, t_n, e_n)$. Put $e_0'=e$, $e_{n-1}' = {\mathbf r}(_{e_0'}|r)$ and $e_n' =  {\mathbf r}(_{e_{n-1}'}|p_n)$. Put also $e_n'' = f$ and $e_{n-1}'' = {\mathbf d}(p_n|_{e_n''})$, $e_0'' = {\mathbf d}(r|_{e_{n-1}''})$. Observe that ${\mathbf r}(_{e_0'}|p) =  e_n'$. Indeed,  $_{e_0'}|p = {} _{e_0'}|(rp_n)$ which by (R4a) equals $_{e_0'}|r\, _{{\mathbf r}(_{e_0'}|r)}|p_n = {}_{e_0'}|r\, _{e_{n-1}'}|p_n$. It follows that ${\mathbf r}(_{e_0'}|p) = {\mathbf r}(_{e_{n-1}'}|p_n) = e_n'$. Similarly, we have ${\mathbf d}(p|_{e_n''}) =  e_0''$. 
This can be illustrated with the following diagram:
\begin{center}
\begin{tikzpicture}
\tikzset{edge/.style = {->,> = latex'}}
\node (E0) at (0,1.6) {$e_0$};
\node (E1) at (2,1.6) {$e_1$};
\node (E2) at (4,1.6) {$e_2$};
\node (Edots) at (6,1.6) {$\cdots$};
\node (En-1) at (8,1.6) {$e_{n-1}$};
\node (En) at (10,1.6) {$e_n$};
\draw [edge, thick]  (E0) -- (E1) ;
\draw [edge, thick]  (E1) -- (E2) ;
\draw [edge, thick]  (En-1) -- (En) ;
\node at (1,2) {$t_1$}; 
\node at (3,2) {$t_2$};
\node at (9,2) {$t_n$}; 
\node (A0) at (0,3.2) {$e=e'_0$};
\node (A1) at (2,3.2) {$e'_1$};
\node (A2) at (4,3.2) {$e'_2$};
\node (Adots) at (6,3.2) {$\cdots$};
\node (An-1) at (8,3.2) {$e'_{n-1}$};
\node (An) at (10,3.2) {$e'_n$};
\draw [edge, thick]  (A0) -- (A1) ;
\draw [edge, thick]  (A1) -- (A2) ;
\draw [edge, thick]  (An-1) -- (An) ;
\node at (1,3.6) {$t_1$}; 
\node at (3,3.6) {$t_2$};
\node at (9,3.6) {$t_n$}; 
\node (B0) at (0,0) {$e''_0$};
\node (B1) at (2,0) {$e''_1$};
\node (B2) at (4,0) {$e''_2$};
\node (Bdots) at (6,0) {$\cdots$};
\node (Bn-1) at (8,0) {$e''_{n-1}$};
\node (Bn) at (10,0) {$e''_n=f$};
\draw [edge, thick]  (B0) -- (B1) ;
\draw [edge, thick]  (B1) -- (B2) ;
\draw [edge, thick]  (Bn-1) -- (Bn) ;
\node at (1,0.4) {$t_1$}; 
\node at (3,0.4) {$t_2$};
\node at (9,0.4) {$t_n$}; 
\node at (0,0.8) {\vertleq};
\node at (2,0.8) {\vertleq};
\node at (4,0.8) {\vertleq};
\node at (8,0.8) {\vertleq};
\node at (10,0.8) {\vertleq};
\node at (0,2.4) {\vertgeq};
\node at (2,2.4) {\vertgeq};
\node at (4,2.4) {\vertgeq};
\node at (8,2.4) {\vertgeq};
\node at (10,2.4) {\vertgeq};
\end{tikzpicture}
\end{center}
Applying (C) and the inductive hypothesis we have 
\begin{equation}\label{eq:j1}
(_{e_{n-1}'}|p_n)|_{e_n'e_n''} = {}_{e_{n-1}'e_{n-1}''}|(p_n|_{e_n''}),
\end{equation} 
\begin{equation}\label{eq:j2}
 {}_{e_0'e_0''}|(r|_{e_{n-1}''}) = (_{e_0'}|r)|_{e_{n-1}'e_{n-1}''}.
\end{equation}
It follows that
\begin{align*}
(_{e_0'}|p)|_{e_n'e_n''} & = ((_{e_0'}|r)(_{e_{n-1}'}|p_n))|_{e_n'e_n''}   & (\text{by (R4a)}) \\
& = (_{e_0'}|r)|_{e_{n-1}'e_{n-1}''} \,\, {}_{e_{n-1}'e_{n-1}''}|(p_n|_{e_n''}) & (\text{by (CR4a) and } \eqref{eq:j1})\\
& = {}_{e_0'e_0''}|(r|_{e_{n-1}''}) \,  {}_{e_{n-1}'e_{n-1}''}|(p_n|_{e_n''}) & (\text{by \eqref{eq:j2}})\\
& = {}_{e_0'e_0''}|(r|_{e_{n-1}''}p_n|_{e_n''}) & (\text{by (R4a) and \eqref{eq:j2}})\\
& = {}_{e_0'e_0''}|(p|_{e_n''}),
\end{align*}
which completes the proof.
\end{proof}

For any edges $p_1=(e_0,t_1,e_1), \dots, p_n=(e_{n-1},t_n,e_n) \in {\mathrm E}({\mathcal G})$ such that $(e_0,t_1\cdots t_n,e_n) \in  {\mathrm E}({\mathcal G})$ where $n\geq 2$ we put $p_1\cdots p_n \equiv (e_0,t_1\cdots t_n,e_n)$. Note that  if $p\equiv q$ then ${\mathbf d}(p) = {\mathbf d}(q)$, ${\mathbf r}(p) = {\mathbf r}(q)$. 
Let $\sim$ be the congruence on  the semicategory 
${{\mathcal C}}({\mathcal G})$ generated by the relation $\equiv$. By ${{\widetilde{\mathcal C}}}({\mathcal G})$ we denote the quotient semicategory of ${{\mathcal C}}({\mathcal G})$ by $\sim$. For any $c\in {{\widetilde{\mathcal C}}}({\mathcal G})$ we have that  ${\mathbf d}(c) = {\mathbf d}(p)$ and ${\mathbf r}(c) = {\mathbf r}(p)$ where $p$ is an arbitrary representative of $c$.  In addition, since $p\sim q$ implies ${\mathbf l}(p) = {\mathbf l}(q)$, for any $c\in {{\widetilde{\mathcal C}}}({\mathcal G})$ we can define ${\mathbf l}(c)$ to be equal to ${\mathbf l}(p)$ where $p$ is an arbitrary representative of $c$. For $p\in {{\mathcal C}}({\mathcal G})$ by $[p]$ we denote the $\sim$-class that contains $p$.
Observe that ${{\widetilde{\mathcal C}}}({\mathcal G})$ is in fact a category. Indeed, if $e$ is a vertex of ${\mathcal G}$ then $[(e,1,e)]\in {{\widetilde{\mathcal C}}}({\mathcal G})$ is the identity morphism at $e$ in ${{\widetilde{\mathcal C}}}({\mathcal G})$.

\begin{lemma} Let $p,q\in {{\mathcal C}}({\mathcal G})$ be such that $p\sim q$ and let $e\leq {\mathbf d}(p) = {\mathbf d}(q)$ and $f\leq {\mathbf r}(p) = {\mathbf r}(q)$. Then $_e|p \sim {} _e|q$ and $p|_f\sim q|_f$. 
\end{lemma}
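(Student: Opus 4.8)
The plan is to prove that the operations of restriction and corestriction on paths are well-defined on $\sim$-classes, by showing they respect the generating relation $\equiv$ and then extending to the congruence $\sim$ by a standard argument. Since $\sim$ is the congruence generated by $\equiv$, it suffices to verify the claim when $p \to q$ is a single generating step (together with products of such on either side, which is handled by the congruence machinery and by (R4a)/(CR4a)), and then compose. I will treat restriction $_e|{-}$ in detail; corestriction is dual.

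First I would reduce to the basic generating situation. By definition of the congruence $\sim$, the relation $p \sim q$ is obtained from $\equiv$ by symmetric-transitive closure together with left and right multiplication by composable paths. Because $_e|{-}$ interacts well with concatenation via (R4a), namely $_e|(ab) = {}_e|a \,\, {}_{{\mathbf r}(_e|a)}|b$, and similarly for corestriction via (CR4a), I can peel off the outer factors: if $p = a p' b$ and $q = a q' b$ where $p' \equiv q'$ is a single generating step, then applying (R4a) twice gives $_e|p = {}_e|a \,\, {}_{{\mathbf r}(_e|a)}|(p'b)$ and a further application reduces the problem to comparing $_{e'}|p'$ with $_{e'}|q'$ for the appropriate restricted vertex $e' = {\mathbf r}(_e|a)$, provided the tails agree. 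The key point is that since $p' \equiv q'$ forces ${\mathbf d}(p') = {\mathbf d}(q')$ and ${\mathbf r}(p') = {\mathbf r}(q')$, restricting $p'$ and $q'$ to the same $e'$ produces paths with the same terminal vertex (by R1a), so the outer right factor $b$ gets restricted to the same vertex in both cases and contributes identically.

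The heart of the argument is thus the single generating instance: given edges $p_1 = (e_0, t_1, e_1), \dots, p_n = (e_{n-1}, t_n, e_n)$ with $(e_0, t_1\cdots t_n, e_n) \in {\mathrm E}({\mathcal G})$, so that $p_1 \cdots p_n \equiv (e_0, t_1\cdots t_n, e_n)$, I must show $_{e_0'}|(p_1\cdots p_n) \sim {}_{e_0'}|(e_0, t_1\cdots t_n, e_n)$ for every $e_0' \leq e_0$. Here the left side, by (RPath), is the path $(e_0', t_1, e_1', \dots, e_{n-1}', t_n, e_n')$ with the $e_i'$ defined by successive restriction, while the right side, by axiom (R4), is exactly the single edge $(e_0', t_1\cdots t_n, e_n')$ with the \emph{same} terminal vertex $e_n'$ — this is precisely the content of (R4). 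But now $(e_0', t_1, e_1', \dots, t_n, e_n')$ is itself a path of consecutive edges whose label product is $t_1\cdots t_n$ and whose composite edge $(e_0', t_1\cdots t_n, e_n')$ lies in ${\mathrm E}({\mathcal G})$, so by the very definition of $\equiv$ these two are $\equiv$-related, hence $\sim$-related. The corestriction case is entirely dual, using (CRPath) and (CR4).

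The main obstacle I anticipate is bookkeeping in the reduction step rather than any genuine mathematical difficulty: making the congruence-closure induction precise, i.e. correctly tracking how the restriction vertex $e'$ propagates through the outer factors and verifying that the restricted outer factors genuinely coincide on both sides. This rests on the observation (already noted before the lemma) that $p \equiv q$ implies ${\mathbf d}(p) = {\mathbf d}(q)$ and ${\mathbf r}(p) = {\mathbf r}(q)$, which guarantees that restriction to a common vertex yields a common terminal vertex by (R1a), so the ambient context is unchanged. Once the generating step is settled by (R4) as above, the extension to arbitrary $\sim$-related pairs is formal, using that $\sim$ is generated by $\equiv$ under the semicategory congruence operations and that $_e|{-}$ and ${-}|_f$ distribute over concatenation by (R4a) and (CR4a).
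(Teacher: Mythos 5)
Your proposal is correct and follows essentially the same route as the paper's own proof: reduce to a single generating instance $p=ap_1\cdots p_nb$, $q=a(e_0,t_1\cdots t_n,e_n)b$, peel off the outer factors with (R4a)/(CR4a), and observe that (RPath) applied to $p_1\cdots p_n$ and (R4) applied to the composite edge produce an $\equiv$-related pair with the same terminal vertex $e_n'$, so the restricted right factor coincides on both sides. (One small note: the agreement of terminal vertices is guaranteed by (R4), not by (R1a) as your reduction paragraph briefly suggests, but you correctly invoke (R4) for this in the detailed step, so the argument stands.)
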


\begin{proof} It is enough to assume that $p=ap_1\cdots p_nb$  where $n\geq 2$, $p_i=(e_{i-1}, t_i, e_i)$, $i\in \{1,\dots, n\}$, and $q= a(e_0, t_1\cdots t_n, e_n)b$, $a,b\in  {{\mathcal C}}({\mathcal G})$.  Let $h= {\mathbf r}(_e|a)$. Then $$_e|p = {}_e|a \, {}_h|(p_1\cdots p_nb) \, \text{ and } \, _e|q = {}_e|a \, {}_h|((e_0, t_1\cdots t_n, e_n)b)$$ by (R4a). Hence it suffices to show that 
$_h|(p_1\cdots p_nb) \sim {}_h|((e_0, t_1\cdots t_n, e_n)b)$.  By the definition of $\sim$ we have $p_1\cdots p_n \sim (e_0, t_1\cdots t_n, e_n)$ thus, applying (R4) and (RPath), also $_h|(p_1\cdots p_n)\sim {}_h|(e_0, t_1\cdots t_n, e_n)$. Let $h'={\mathbf r}(_h|(p_1\dots p_n)) = {\mathbf r}(_h|(e_0, t_1\cdots t_n, e_n))$. Then $$_h|(p_1\cdots p_nb) = {}_h|(p_1\dots p_n)\, _{h'}|b \sim {}_h|(e_0, t_1\cdots t_n, e_n)\,_{h'}|b =  {}_h|((e_0, t_1\cdots t_n, e_n)b),$$ as needed. 

For $p|_f\sim q|_f$, the proof is similar.
\end{proof}

It follows that for $c\in {{\widetilde{\mathcal C}}}({\mathcal G})$ and $e\leq {\mathbf d}(c), f\leq {\mathbf r}(c)$ we can define the {\em restriction of} $c$ {\em to} $e$ by $_e|c = [_e|p]$ and the {\em corestriction of} $c$ to $f$ by $c|_f=[p|_f]$ where $p$ is an arbitrary representative of $c$.

The following is an analogue of Lemma \ref{lem:axioms_paths} for ${{\widetilde{\mathcal C}}}({\mathcal G})$.

\begin{lemma} \label{lem:axioms_path_cat}
Let $c\in {{\widetilde{\mathcal C}}}({\mathcal G})$ and $e\leq {\mathbf d}(c), f\leq {\mathbf r}(c)$. Then:
\begin{enumerate}
\item[${\mathrm{(R1b)}}$]\label{i:paths_cat1} ${\mathbf r}(_e|c)\leq {\mathbf r}(c)$;
\item[${\mathrm{(R2b)}}$]\label{i:paths_cat2} $_{{\mathbf d}(c)}|c = c$;
\item[${\mathrm{(R3b)}}$]\label{i:paths_cat3} if $g\leq e$ then $_g|(_e|c) = {} _g|c$;
\item[${\mathrm{(R4b)}}$]\label{i:paths_cat4} if $d\in {{\widetilde{\mathcal C}}}({\mathcal G})$ is such that ${\mathbf r}(c) = {\mathbf d}(d)$ then $_e|(cd) = {}_e|c \, {}_{{\mathbf r}(_e|c)}|d$;
\item[${\mathrm{(R5b)}}$]\label{i:paths_cat5} if $c=[(e,1,e)]$ and $g\leq e$ then $_g|c = [(g,1,g)]$;
\item[${\mathrm{(CR1b)}}$]\label{i:paths_cat1c} ${\mathbf d}(c|_f)\leq {\mathbf d}(c)$;
\item[${\mathrm{(CR2b)}}$]\label{i:paths_cat2c} $c|_{{\mathbf r}(c)} = c$;
\item[${\mathrm{(CR3b)}}$]\label{i:paths_cat3c} if $g\leq f$ then $(c|_f)|_g = {} c|_g$;
\item[${\mathrm{(CR4b)}}$]\label{i:paths_cat4c}  if $d\in {{\widetilde{\mathcal C}}}({\mathcal G})$ is such that ${\mathbf r}(c) = {\mathbf d}(d)$ and $g\leq {\mathbf r}(d)$ then $(cd)|_g = c|_{{\mathbf d}(d|_g)} \, d|_g$;
\item[${\mathrm{(CR5b)}}$]\label{i:paths_cat5c} if $c=[(e,1,e)]$ and $g\leq e$ then $c|_g = [(g,1,g)]$;
\item[${\mathrm{(Cb)}}$]   $(_e|c)|_{{\mathbf r}(_e|c)f} = \space{} _{{\mathbf d}(c|_f)e}|(c|_f)$.
\end{enumerate} 
\end{lemma}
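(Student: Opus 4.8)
The plan is to deduce every clause of Lemma~\ref{lem:axioms_path_cat} from the corresponding clause of Lemma~\ref{lem:axioms_paths} by passing to representatives, so that no new computation at the level of ${{\widetilde{\mathcal C}}}({\mathcal G})$ is needed. Three facts make this transfer routine. First, the canonical map $p\mapsto [p]$ is a morphism of semicategories, so $[pq] = [p][q]$ whenever $p,q$ are composable. Second, the functions ${\mathbf d},{\mathbf r},{\mathbf l}$ are constant on $\sim$-classes, as already observed, so ${\mathbf d}(c) = {\mathbf d}(p)$ and ${\mathbf r}(c) = {\mathbf r}(p)$ for any representative $p$ of $c$. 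Third, by the preceding lemma the restriction and corestriction of a class are independent of the chosen representative, which is exactly what legitimises the definitions $_e|c = [{}_e|p]$ and $c|_f = [p|_f]$.

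First I would dispatch the ``one-arrow'' clauses (R1b), (R2b), (R3b), (R5b) and their corestriction duals. Fixing a representative $p$ of $c$ and writing $_e|c = [{}_e|p]$, I compute ${\mathbf r}(_e|c) = {\mathbf r}({}_e|p)\leq {\mathbf r}(p) = {\mathbf r}(c)$ using ${\mathbf r}$-invariance and (R1a), which is (R1b). For (R2b), $_{{\mathbf d}(c)}|c = [{}_{{\mathbf d}(p)}|p] = [p] = c$ by (R2a). For (R3b), since $_e|p$ is a representative of $_e|c$, we get $_g|(_e|c) = [{}_g|({}_e|p)] = [{}_g|p] = {}_g|c$ by (R3a). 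For (R5b) the representative $(e,1,e)$ of $c$ gives $_g|c = [(g,1,g)]$ by (R5). The corestriction clauses (CR1b)--(CR3b), (CR5b) are identical, invoking the dual path-axioms.

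The composition clauses (R4b), (CR4b) and the compatibility clause (Cb) are where the quotient-morphism property is used. For (R4b) I choose representatives $p$ of $c$ and $q$ of $d$; since ${\mathbf r}(p) = {\mathbf r}(c) = {\mathbf d}(d) = {\mathbf d}(q)$ they are composable and $cd = [pq]$, whence $_e|(cd) = [{}_e|(pq)] = [{}_e|p\,\, {}_{{\mathbf r}(_e|p)}|q]$ by (R4a). Because ${\mathbf r}(_e|c) = {\mathbf r}({}_e|p)$, the right-hand side factors through the quotient map as $[{}_e|p]\,[{}_{{\mathbf r}(_e|c)}|q] = ({}_e|c)\,({}_{{\mathbf r}(_e|c)}|d)$, giving (R4b); (CR4b) is dual. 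Finally (Cb) follows from (Ca) in the same manner: with a representative $p$, both sides are the class of a common path by (Ca), and one only has to verify that the subscripts ${\mathbf r}(_e|c)f$ and ${\mathbf d}(c|_f)e$ coincide with their path-level counterparts ${\mathbf r}({}_e|p)f$ and ${\mathbf d}(p|_f)e$, which is immediate from the $\sim$-invariance of ${\mathbf d}$ and ${\mathbf r}$.

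I do not expect a genuine obstacle here: the essential work, in particular the inductive argument establishing (Ca), was carried out at the level of ${{\mathcal C}}({\mathcal G})$ in Lemma~\ref{lem:axioms_paths}, and the independence of restriction and corestriction from the chosen representative was secured by the intervening lemma. The only point demanding care is the bookkeeping of indices, ensuring that in each clause the restriction or corestriction in the quotient is taken at precisely the projection for which the path-level identity applies; this amounts to repeatedly using that ${\mathbf d}$ and ${\mathbf r}$ descend unambiguously to $\sim$-classes.
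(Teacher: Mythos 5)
Your proposal is correct and matches the paper's own argument: the paper likewise derives each clause from the corresponding path-level axiom (R1a)--(R5a), (CR1a)--(CR5a), (Ca) by passing to representatives, using that $[\,\cdot\,]$ is a semicategory morphism and that restriction/corestriction descend to $\sim$-classes, and it works out (R4b) explicitly as the sample case just as you do. No differences worth noting.
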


\begin{proof} The proofs follow from (R1a)--(R5a), (CR1a)--(CR5a) and (Ca) using the definition of restriction and corestriction in ${{\widetilde{\mathcal C}}}({\mathcal G})$.
We prove for example (R4b): Let $q\in cd$. Then $q=pp'$ where $p\in c$ and $p'\in d$. We have: 
$$_e|(cd) = {}_e|([pp']) = [{}_e|(pp')] =  [{}_e|p \,\, {}_{{\mathbf r}(_e|p)}|p'] = [{}_e|p] [{}_{{\mathbf r}(_e|p)}|p'] = {}_e|[p] \, {}_{{\mathbf r}(_e|p)}|[p'] = {}_e|c \, {}_{{\mathbf r}(_e|c)}|d,$$
as needed. All other items are proved similarly.
\end{proof}

\subsection{From a labelled directed graph with compatible restrictions and corestrictions to a proper Ehresmann semigroup} Let $T$ be a monoid, $E$ a semilattice and ${\mathcal G}$ a labelled directed graph with the vertex set $E$ and edges labelled by elements of $T$ which has compatible restrictions and corestrictions (see Definition \ref{def:labelled_graph}).

Let $E\rtimes_{\mathcal G} T$ be the set which coincides with the underlying set of the category ${{\widetilde{\mathcal C}}}({\mathcal G})$.
On this set we define the operations $\cdot$, \, $^+$ and $^*$ as follows:
\begin{equation}\label{eq:def_product}
\forall c, d\in E\rtimes_{\mathcal G} T\colon \,\, c\cdot d = c|_{{\mathbf r}(c){\mathbf d}(d)} \,\, \space{} _{{\mathbf r}(c){\mathbf d}(d)}|d,
\end{equation}
\begin{equation}\label{eq:def_plus_star}
\forall c\in E\rtimes_{\mathcal G} T\colon \,\, c^+=[({\mathbf d}(c),1,{\mathbf d}(c))] \;\text{ and } \; c^* = [({\mathbf r}(c),1,{\mathbf r}(c))].
\end{equation}

\begin{theorem}\mbox{}\label{th:construction}
\begin{enumerate}
\item \label{i:Ehresmann} $(E\rtimes_{\mathcal G} T,\cdot, ^+, ^*)$ is an Ehresmann semigroup.
\item \label{i:projections} $P(E\rtimes_{\mathcal G} T) = \{[(e,1,e)]\colon e\in E\}$ and $P(E\rtimes_{\mathcal G} T)\simeq E$ 
via the map \mbox{$[(e,1,e)]\mapsto e$.} It follows that $E\rtimes_{\mathcal G} T$ is a monoid if and only if $E$ has a maximum element.
\item \label{i:sigma} Let $c, d\in E\rtimes_{\mathcal G} T$. If $c \mathrel{\sigma} d$ then ${\mathbf l}(c) = {\mathbf l}(d)$. 
\item \label{i:orders}
Let $c,d \in E\rtimes_{\mathcal G} T$. Then $c\leq_l d$ (respectively  $c\leq_r d$) if and only if there is some $e\leq {\mathbf d}(d)$ (respectively $f\leq {\mathbf r}(d)$) such that $c={}_e|d$ (respectively $c=d|_f$). Consequently, $c\leq d$ holds if and only if there are some $e\leq {\mathbf d}(d)$ and $f\leq {\mathbf r}({}_e|d)$ such that $c=({}_e|d)|_f$.
\item \label{i:proper} If $c \mathrel{\sigma} d$ holds if and only if ${\mathbf l}(c) = {\mathbf l}(d)$ for all $c, d\in E\rtimes_{\mathcal G} T$, then  $E\rtimes_{\mathcal G} T$ is proper with the proper generating ideal $Y=\{[(e,s,f)]\colon (e,s,f)\in {\mathrm E}({\mathcal G})\}$ and $(E\rtimes_{\mathcal G} T)/\sigma \simeq T$ as monoids. 
\end{enumerate}
\end{theorem}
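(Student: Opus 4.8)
Write $S=E\rtimes_{\mathcal G}T$. The plan is to verify the three conditions of Definition~\ref{def:proper} for the candidate ideal $Y$, and then to treat $S/\sigma\simeq T$ separately. Condition~(1) is immediate from part~(2): the projections are exactly the classes $[(e,1,e)]$, and each lies in $Y$ because $(e,1,e)\in{\mathrm E}({\mathcal G})$. To see that $Y$ is an order ideal I would invoke the order description of part~(4): if $c=[(e,s,f)]\in Y$ and $d\leq c$, then $d=({}_{g}|c)|_{h}$ for some $g\leq e$ and $h\leq({}_{g}|c)^{*}$; since ${}_{g}|(e,s,f)$ is again an edge by (R1) and the corestriction of an edge is an edge by (CR1), $d$ is the class of a single edge, so $d\in Y$.

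For condition~(2) let $c=[(e,s,f)]\in Y$ and suppose $d\in S$ satisfies $d^{+}=c^{+}$, $d^{*}=c^{*}$ and $d\mathrel{\sigma}c$. By part~(2), $d^{+}=c^{+}$ and $d^{*}=c^{*}$ force ${\mathbf d}(d)=e$ and ${\mathbf r}(d)=f$, and by part~(3) the relation $d\mathrel{\sigma}c$ gives ${\mathbf l}(d)=s$. Hence any representing path $p$ of $d$ runs from $e$ to $f$ with label $s$, so its collapse is precisely the edge $(e,s,f)$; as this edge exists, $p$ is related to $(e,s,f)$ by the relation generating $\sim$ (and if $p$ has length one it already equals this edge), whence $d=[(e,s,f)]=c$. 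Thus every element of $Y$ is proper.

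The core of the argument is condition~(3), which I expect to be the main obstacle. For existence I would take a representing path $p=(e_{0},t_{1},e_{1},\dots,t_{n},e_{n})$ of $c$; its consecutive edges $p_{i}=(e_{i-1},t_{i},e_{i})$ are composable, and for composable factors the restriction and corestriction appearing in \eqref{eq:def_product} are trivial by (R2b) and (CR2b), so the semigroup product collapses to concatenation and $c=[p_{1}]\cdots[p_{n}]$ is a matching factorization into $Y$. For uniqueness up to the equivalence $\equiv$ of Definition~\ref{def:proper}, I would set up a correspondence between matching factorizations of $c$ into $Y$ and representing paths of $c$: any matching factorization $[q_{1}]\cdots[q_{m}]$ has composable edges $q_{j}$ (matching forces ${\mathbf r}(q_{j})={\mathbf d}(q_{j+1})$ by part~(2)), and its product is the class of the concatenation $q_{1}\cdots q_{m}$, a representative of $c$. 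Any two representatives of $c$ are $\sim$-equivalent, and $\sim$ is the semicategory congruence generated by the collapse relation; hence they are joined by finitely many elementary moves, each replacing a contiguous subpath $p_{i}\cdots p_{j}$ by the edge $(e_{i-1},t_{i}\cdots t_{j},e_{j})$ when it exists, or conversely. The delicate step is to recognise each such move as an application of $\to$ or its inverse: the collapsed edge lies in $Y$ and equals the subproduct $[p_{i}]\cdots[p_{j}]$, and being in $Y$ it is proper, so the collapse is a legitimate $\to$-step and the expansion its inverse. Consequently $\sim$-equivalence of representatives yields $\equiv$-equivalence of the corresponding matching factorizations, giving uniqueness.

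Finally, for $(E\rtimes_{\mathcal G}T)/\sigma\simeq T$ I would consider $\ell\colon S/\sigma\to T$, $[c]_{\sigma}\mapsto{\mathbf l}(c)$, which is well defined on $\sigma$-classes and, by the hypothesis $c\mathrel{\sigma}d\Leftrightarrow{\mathbf l}(c)={\mathbf l}(d)$, injective; this is exactly where the assumption of part~(5) is used. Surjectivity comes from the standing assumption that every $t\in T$ is the label of some path in ${\mathcal G}$. Multiplicativity follows because restriction and corestriction preserve labels (the $t_{i}$ are unchanged in (RPath) and (CRPath)) and the label of a concatenation is the product of labels, so \eqref{eq:def_product} gives ${\mathbf l}(c\cdot d)={\mathbf l}(c){\mathbf l}(d)$. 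As $\ell$ sends the identity $\sigma$-class, that of the projections of label $1$, to $1$, it is the desired monoid isomorphism.
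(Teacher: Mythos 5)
Your proposal only addresses part~(5) of the theorem. The statement has five parts, and your argument explicitly takes parts (1)--(4) as given: you cite part~(2) for the description of projections, part~(3) for the implication $d\mathrel{\sigma}c\Rightarrow{\mathbf l}(d)={\mathbf l}(c)$, and part~(4) for the order description used to show that $Y$ is an order ideal. None of these is free. In particular, part~(1) --- that the operation \eqref{eq:def_product} is associative and that the Ehresmann identities \eqref{eq:axioms_plus}, \eqref{eq:axioms_star}, \eqref{eq:axioms_common} hold --- is the bulk of the paper's proof and is genuinely nontrivial: associativity requires chasing the compatibility axiom (Cb) together with (R3b), (R4b), (CR3b), (CR4b) to show that the two bracketings produce the same triple of factors in ${{\widetilde{\mathcal C}}}({\mathcal G})$. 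Parts (2)--(4) are shorter but still need arguments (e.g.\ part~(3) is proved by exhibiting $a\mapsto{\mathbf l}(a)$ as a $(2,1,1)$-morphism onto the reduced Ehresmann semigroup $T$, and part~(4) by unwinding $c=[(e,1,e)]\cdot d$ via (CR5b)). As a proof of the theorem as stated, the proposal is therefore incomplete.

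Within part~(5), however, your argument is sound and follows the paper's route, and in one place it is more careful than the paper's own text: to show that $[(e,s,f)]\in Y$ is proper you compare it against an \emph{arbitrary} $d\in E\rtimes_{\mathcal G}T$ with the same three coordinates, observing that any representing path of $d$ runs from $e$ to $f$ with label $s$ and hence collapses, via the generating relation of $\sim$, onto the existing edge $(e,s,f)$; the paper only compares two elements of $Y$. Your treatment of uniqueness of matching factorizations --- the correspondence between $Y$-factorizations and representative paths, with elementary $\sim$-moves translated into $\to$-moves because each collapsed edge lies in $Y$ and is therefore proper --- correctly expands the paper's one-line assertion that ``the definition of $\sim$ yields that such a factorization is unique up to equivalence.'' The argument for $(E\rtimes_{\mathcal G}T)/\sigma\simeq T$ also matches the paper. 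So the work you did is correct; what is missing is everything before it.
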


Remark that in (4) above and in what follows the symbol $\leq$ denotes both the natural partial order on $E\rtimes_{\mathcal G} T$, and also the order on the semilattice $E$. It is always clear from the context which of these two orders is being used.

\begin{proof}
\eqref{i:Ehresmann} We start from showing that the multiplication $\cdot$ is associative. 
Let $a, b,c\in E\rtimes_{\mathcal G} T$.
Denote $f= {\mathbf r}(a)$, $g={\mathbf d}(b)$,  $h={\mathbf r}(b)$ and $e={\mathbf d}(c)$. 
Let further $m=f{\mathbf d}(b|_{he})$ and $k={\mathbf r}(_{fg}|b)e$. For convenience, in this proof we denote the multiplication in the category ${{\widetilde{\mathcal C}}}({\mathcal G})$ by $\circ$. 
We calculate:
\begin{align*}
(a\cdot b)\cdot c & = (a|_{fg} \circ \space{} _{fg}|b)\cdot c & (\text{by the definition of $\cdot$})\\
& = \Bigl(\bigl(a|_{fg}\circ \space{} _{fg}|b\bigr)|_{k}\Bigr) \circ \space{} _{k}|c & (\text{by the definition of $\cdot$})\\
& = \Bigl((a|_{fg})|_{{\mathbf{d}}\bigl((_{fg}|b)|_{k}\bigr)} \circ (_{fg}|b)|_{k}\Bigr) \circ \space{} _{k}|c & ( \text{by (CR4b)})\\
& = \Bigl(a|_{{\mathbf{d}}\bigl((_{fg}|b)|_{k}\bigr)}  \circ (_{fg}|b)|_{k}\Bigr) \circ \space{} _{k}|c & ( \text{by (CR3b)})\\
& = a|_{{\mathbf{d}}\bigl((_{fg}|b)|_{k}\bigr)} \circ \Bigl((_{fg}|b)|_{k} \circ \space{} _{k}|c \Bigr). & ( \text{since } \circ \text{ is associative})
\end{align*}
Denote $a'= a|_{{\mathbf{d}}\bigl((_{fg}|b)|_{k}\bigr)}$, $b'=(_{fg}|b)|_{k}, c'={}_{k}|c$. On the other hand we have:
\begin{align*}
a\cdot (b\cdot  c) & = a\cdot \bigl( b|_{he}\circ \space{} _{he}|c\bigr) & 
(\text{by the definition of $\cdot$}) \\
&= a|_{m} \circ \space{} _m|\bigl(b|_{he}\circ \space{} _{he}|c\bigr) & (\text{by the definition of $\cdot$})\\
& = a|_{m} \circ \left(\space{} _m|(b|_{he})\circ \space{} _{{\mathbf{r}}(_m|(b|_{he}))}|c\right). & (\text{by (R4b) and (R3b)})
\end{align*}
Denote $a'' = a|_{m}$, $b''={}_m|(b|_{he})$, $c'' = {}_{{\mathbf{r}}(_m|(b|_{he}))}|c$. Observe that
\begin{align*}
b' & = (_{fg}|b)|_{{\mathbf r}(_{fg}|b)e} & \\
& = (_{fg}|b)|_{{\mathbf r}(_{fg}|b)he} & (\text{since } {\mathbf r}(_{fg}|b)\leq h \text{ by (CR1b)})\\
& =  {}_{{\mathbf d}(b|_{he})fg}|(b|_{he})  & (\text{by (Cb)})\\
& =  {}_{{\mathbf d}(b|_{he})f}|(b|_{he}) & (\text{since } {\mathbf d}(b|_{he})\leq g \text{ by (CR1b)}) \\ 
& = b''.
\end{align*}
It follows that  ${\mathbf{d}}((_{fg}|b)|_{k}) = {\mathbf d}(b')={\mathbf d}(b'') = m$ 
and similarly ${{\mathbf{r}}(_m|(b|_{he}))} = k$. 
Hence $a'= a|_{{\mathbf{d}}\bigl((_{fg}|b)|_{k}\bigr)} = a|_m = a''$  and similarly $c'=c''$.  
Thus $(a\cdot b)\cdot c = a'\circ (b'\circ c') = a''\circ (b''\circ c'') = a\cdot (b\cdot c)$.

Let $a\in E\rtimes_{\mathcal G} T$. Applying the definition of $\cdot$, (R2b) and (CR2b) we have 
$$a\cdot a^* = a\cdot [({\mathbf r}(a),1,{\mathbf r}(a))] = a|_{{\mathbf r}(a)} \circ 
{}_{{\mathbf r}(a)}|[({\mathbf r}(a),1,{\mathbf r}(a))] = a \circ [({\mathbf r}(a),1,{\mathbf r}(a))] = a.$$ 
Hence $a\cdot a^* = a$. 
 
Let $a,b\in E\rtimes_{\mathcal G} T$. Put $f={\mathbf r}(a)$, $g={\mathbf r}(b)$. Then applying (R5b) and (CR5b) we have 
\begin{multline*}a^*\cdot b^* = [(f,1,f)] \cdot [(g,1,g)] = [(f,1,f)]|_{fg}\circ \space{} _{fg}|[(g,1,g)] = \\ [(fg,1,fg)]\circ [(fg,1,fg)] = [(fg,1,fg)^2] = [(fg,1,fg)].
\end{multline*}
It follows that $(a^*\cdot b^*)^*=a^*\cdot b^* = b^*\cdot a^*$ holds.

Let $a, b\in E\rtimes_{\mathcal G} T$. Put $f={\mathbf r}(a)$, $g={\mathbf d}(b)$. Then $(a\cdot b)^* =  (a|_{fg} \circ\space{} _{fg}|b)^* = [({\mathbf r}(_{fg}|b),1,{\mathbf r}(_{fg}|b))]$ 
and $(a^*\cdot b)^* = ([(f,1,f)]|_{fg} \circ {} _{fg}|b)^* =  [({\mathbf r}(_{fg}|b),1,{\mathbf r}(_{fg}|b))]$. It follows that $(a\cdot b)^* = (a^*\cdot b)^*$.

We have verified that $(E\rtimes_{\mathcal G} T, \cdot, ^*)$ is a right Ehresmann semigroup. Dually it follows that $(T\rtimes_{\mathcal G} E, \cdot, ^+)$ is a left Ehresmann semigroup. In addition, it is easy to see that $(a^*)^+ = a^*$ and $(a^+)^* = a^+$ for all $a\in E\rtimes_{\mathcal G} T$. 
Therefore, $(E\rtimes_{\mathcal G} T, \cdot, ^+, ^*)$ is an Ehresmann semigroup.

\eqref{i:projections} Let $a\in E\rtimes_{\mathcal G} T$ be a projection. Then $a=a^+=[({\bf d}(a),1, {\bf d}(a))]$ and also clearly any $[(e,1,e)]$, $e\in E$, is a projection. Now, since $[(e,1,e)]\cdot [(f,1,f)] = [(ef,1,ef)]$, it follows that $P(E\rtimes_{\mathcal G} T)$ is a semilattice and the map $[(e,1,e)]\to e$ is an isomorphism of  semilattices. By Lemma \ref{lem:monoid} $E\rtimes_{\mathcal G} T$ is a monoid if and only if it has a maximum projection. As we have shown, this is the case if and only if $E$ has a maximum element.

\eqref{i:sigma} Let  $\varphi\colon  E\rtimes_{\mathcal G} T\to T$ be the map given by $a\mapsto {\mathbf l}(a)$ where $T$ is a reduced Ehresmann semigroup. It is easy to see that is a $(2,1,1)$-morphism.
Note that $\varphi$ is surjective, because  for every $t\in T$ there is a path in ${\mathcal G}$ labelled by $t$. So $T$ is a reduced quotient of $E\rtimes_{\mathcal G} T$. 
Hence $T$ is a quotient of $(E\rtimes_{\mathcal G} T)/\sigma$. 
It follows  that $a \mathrel{\sigma} b$ implies that $\varphi(a)=\varphi(b)$, that is, ${\mathbf l}(a) = {\mathbf l}(b)$, which proves \eqref{i:sigma}.

\eqref{i:orders} 
Let $c,d \in E\rtimes_{\mathcal G} T$ be such that $c\leq_l d$. This means that there is a projection $[(e,1,e)]$ such that $c=[(e,1,e)]\cdot d$. Applying \eqref{eq:def_product} and (CR5b), this is equivalent to $c =[(e{\bf d}(d),1,e{\bf d}(d))] \circ {}_{e{\bf d}(d)}|d = {}_{e{\bf d}(d)}|d$. The statement for
$\leq_r$ is dual, and the statement for $\leq$ follows from those for $\leq_l$ and $\leq_r$.

\eqref{i:proper} Suppose that $c \mathrel{\sigma} d$ holds if and only if ${\mathbf l}(c) = {\mathbf l}(d)$ for all $c, d\in E\rtimes_{\mathcal G} T$. 
It follows from \eqref{i:orders} that $Y=\{[(e,s,f)]\colon (e,s,f)\in {\mathrm E}({\mathcal G})\}$ is an order ideal of $E\rtimes_{\mathcal G} T$ with respect to the natural partial order. 

If $c= [(e,s,f)], d=[(g,t,h)]\in Y$ are such that $c^+=d^+$, $c^*=d^*$ and $c\,\sigma\,d$ then, clearly, 
$s = {\mathbf l}(c) = {\mathbf l}(d) = t$ and also $e=g$ and $f= h$, so $c=d$. It follows that all elements of $Y$ are proper. In addition, $Y$ contains all the projections of $E\rtimes_{\mathcal G} T$, by the definition of ${\mathcal G}$. 

Now each element of $E\rtimes_{\mathcal G} T$ decomposes as a matching product of elements of $Y$ and, moreover, the definition of $\sim$ yields that such a factorization is unique up to equivalence. Therefore, the Ehresmann semigroup $E\rtimes_{\mathcal G} T$ is proper. By the assumption, the map $(E\rtimes_{\mathcal G} T)/\sigma \to T$ given by  $[a]_\sigma \mapsto {\mathbf l}(a)$ is well defined and injective. Since for every $t\in T$ there is a path in ${\mathcal G}$ labelled by $t$, this map is surjective. It is also a monoid homomorphism,  and thus an isomorphism. Hence $(E\rtimes_{\mathcal G} T)/\sigma \simeq T$. 
\end{proof}

In what follows the multiplication in $E\rtimes_{\mathcal G} T$ will be often denoted by juxtaposition.

Since $E\rtimes_{\mathcal G} T$ and ${{\widetilde{\mathcal C}}}({\mathcal G})$ have the same underlying set, the relations $\leq_l$, $\leq_r$ and $\leq$ are also partial orders on the category ${{\widetilde{\mathcal C}}}({\mathcal G})$. It is routine to show that, equipped with $\leq_l$ and $\leq_r$, the category ${{\widetilde{\mathcal C}}}({\mathcal G})$ is an Ehresmann category in the sense of Lawson \cite{Lawson91}. After this, the fact that $E\rtimes_{\mathcal G} T$ is an Ehresmann semigroup becomes also a consequence of \cite[Theorem 4.21]{Lawson91}.

Observe that the partial orders $\leq_l$ and $\leq_r$ can be defined already on ${\mathrm{E}}({\mathcal G})$ by $u \leq_l v$ 
if there is some $e\leq {\mathbf d}(v)$  such that $u={}_e|v$ and $u\leq_r v$ if there is some $f\leq {\mathbf r}(v)$ such that $u=v|_f$. Using (C) one can see that   $\leq_l \circ \leq_r \,=\,\, \leq_r \circ \leq_l$, denote this relation by $\leq$. Clearly, $u\leq_l v$ (respectively $u\leq_r v$, or $u\leq v$) in $E(\mathcal{G})$ implies $[u]\leq [v]$ (respectively $[u]\leq_r [v]$, or $[u]\leq [v]$) in $\widetilde{\mathcal{C}}(\mathcal{G})$.

We now single out a special case for which the assumption of Theorem \ref{th:construction}\eqref{i:proper} holds.

\begin{proposition}\label{prop:proper_special} Suppose that $T=X^*$ is the free $X$-generated monoid, edges of ${\mathcal G}$ are labelled by elements of $X$ or $1$, where the edges labelled by $1$ are precisely the edges $(e,1,e)$. Suppose further that for any two edges $u,v\in {\mathrm E}({\mathcal G})$ with ${\mathbf l}(u) = {\mathbf l}(v)\neq 1$ there is an edge $w\in  {\mathrm E}({\mathcal G})$ such that $u,v\leq w$. 
Then for all $c,d\in E\rtimes_{\mathcal G} T$: $c \mathrel{\sigma} d$ if and only if ${\mathbf l}(c) = {\mathbf l}(d)$. Consequently, the Ehresmann semigroup $E\rtimes_{\mathcal G} T$ is proper with  the proper generating ideal $Y=\{[(e,s,f)]\colon (e,s,f)\in {\mathrm E}({\mathcal G})\}$ and $(E\rtimes_{\mathcal G} T)/\sigma \simeq T$.
\end{proposition}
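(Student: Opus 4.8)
The forward implication is immediate: Theorem~\ref{th:construction}\eqref{i:sigma} already gives that $c\mathrel{\sigma}d$ implies ${\mathbf l}(c)={\mathbf l}(d)$. So the entire content lies in the converse, and for this I would show that the surjective $(2,1,1)$-morphism $\varphi\colon E\rtimes_{\mathcal G}T\to T$, $a\mapsto{\mathbf l}(a)$, from the proof of Theorem~\ref{th:construction} has kernel exactly $\sigma$. Writing $\rho=\ker\varphi$, the relation $\rho$ is a congruence that identifies all projections (they all have label $1$), so $\sigma\subseteq\rho$ holds automatically; the task is to prove $\rho\subseteq\sigma$, i.e.\ that ${\mathbf l}(c)={\mathbf l}(d)$ forces $c\mathrel{\sigma}d$. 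Once this is done, the \emph{Consequently} part is just an invocation of Theorem~\ref{th:construction}\eqref{i:proper}, whose hypothesis is precisely the equivalence we will have established.

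The crux is a lemma at the level of single edges: if $u,v\in{\mathrm E}({\mathcal G})$ satisfy ${\mathbf l}(u)={\mathbf l}(v)$, then $[u]\mathrel{\sigma}[v]$ in $E\rtimes_{\mathcal G}T$. If the common label is $1$, then by the standing assumption $u$ and $v$ are loops $(e,1,e),(f,1,f)$, so $[u]$ and $[v]$ are projections and hence $\sigma$-related by the very definition of $\sigma$. If the common label $x$ lies in $X$, this is exactly where the directedness assumption is used: there is an edge $w$ with $u\leq w$ and $v\leq w$ in ${\mathrm E}({\mathcal G})$, whence $[u]\leq[w]$ and $[v]\leq[w]$ in ${{\widetilde{\mathcal C}}}({\mathcal G})=E\rtimes_{\mathcal G}T$ by the remark following Theorem~\ref{th:construction}; since $a\leq b$ implies $a\mathrel{\sigma}b$, we obtain $[u]\mathrel{\sigma}[w]\mathrel{\sigma}[v]$.

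To pass from edges to arbitrary elements I would normalize matching factorizations into generators. Each $c$ equals $[p]$ for a path $p=u_1\cdots u_k$, and since consecutive edges compose, this is a matching factorization $c=[u_1]\cdots[u_k]$ with each $[u_i]\in Y$. Using the defining relations generating $\sim$ (e.g.\ $(e,1,e)(e,x,f)\equiv(e,x,f)$) one deletes all loop edges, so that, when ${\mathbf l}(c)=t\neq1$, the element $c$ is represented by a loop-free matching factorization all of whose edge labels are generators of $X^*$; because $X^*$ is free, the number of factors equals the length of the word $t$ and their labels read off $t$ letter by letter. Hence for ${\mathbf l}(c)={\mathbf l}(d)=t\neq1$ one gets factorizations $c=[u_1]\cdots[u_k]$ and $d=[v_1]\cdots[v_k]$ with ${\mathbf l}(u_i)={\mathbf l}(v_i)$ for each $i$; the edge lemma gives $[u_i]\mathrel{\sigma}[v_i]$, and as $\sigma$ is a congruence, multiplying factor by factor yields $c\mathrel{\sigma}d$. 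The case $t=1$ is immediate, since then both $c$ and $d$ collapse to projections. This establishes $\rho\subseteq\sigma$.

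I expect the main obstacle to be the bookkeeping in the normalization step, namely deleting loop edges via $\equiv$ and invoking freeness of $X^*$ to align the two factorizations factor by factor, rather than the edge lemma, which is a direct consequence of the directedness hypothesis together with $a\leq b\Rightarrow a\mathrel{\sigma}b$. An alternative that sidesteps much of this bookkeeping is to argue inside $(E\rtimes_{\mathcal G}T)/\sigma$: the edge lemma shows that the $\sigma$-class of an edge depends only on its label and that loop edges map to the identity, so $x\mapsto[u]_\sigma$ extends to a map $X^*\to(E\rtimes_{\mathcal G}T)/\sigma$ sending ${\mathbf l}(c)$ to $[c]_\sigma$, which shows directly that $[c]_\sigma$ is determined by ${\mathbf l}(c)$.
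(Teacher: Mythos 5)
Your proposal is correct and follows essentially the same route as the paper: the forward direction is Theorem~\ref{th:construction}(3), the converse reduces to single edges via the common upper bound $w$ together with $a\leq b\Rightarrow a\mathrel{\sigma}b$, and one concludes by writing $c$ and $d$ as matching products of edges with matching labels (using freeness of $X^*$) and invoking that $\sigma$ is a congruence. The paper is merely terser about the normalization step you spell out, asserting directly that $c=[(e_0,a_1,e_1,\dots,e_{n-1},a_n,e_n)]$.
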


\begin{proof} In view of Theorem \ref{th:construction}\eqref{i:sigma} 
it is enough to assume that ${\mathbf l}(c) = {\mathbf l}(d)$ and to show that $c \mathrel{\sigma} d$. (We note that  in view of Definition \ref{def:labelled_graph} each element of $X$ labels at least one edge of ${\mathcal G}$.) Let ${\mathbf l}(c) = {\mathbf l}(d) = a_1\cdots a_n$, where $a_i\in X$ for $i=1,\dots, n$ and \mbox{$n\geq 1$.} Then $c=[(e_0,a_1,e_1,\dots, e_{n-1}, a_n, e_n)]$ and $d=[(f_0,a_1,f_1,\dots, f_{n-1}, a_n,f_n)]$ for some $e_i, f_i\in E$, $0\leq i\leq n$. For each $i=0,\dots, n-1$ let $(g_i,a_{i+1},g_{i+1})\in {\mathrm{E}}({\mathcal G})$ be such that $(e_i,a_{i+1},e_{i+1}), (f_i,a_{i+1},f_{i+1})\leq (g_i,a_{i+1},g_{i+1})$. Then $[(e_i,a_{i+1},e_{i+1})] \mathrel{\sigma} [(g_i,a_{i+1},g_{i+1})] \mathrel{\sigma} [(f_i,a_{i+1},f_{i+1})]$. This yields $c\mathrel{\sigma} d$. If ${\mathbf l}(c)={\mathbf l}(d) =1$, then $c \mathrel{\sigma}  d$ as $\sigma$ identifies all the projections. 
\end{proof}

\subsection{The structure theorem} Let $S$ be a proper Ehresmann semigroup and $Y$ be a proper generating ideal of $S$ (see Definition \ref{def:proper}). 
Denote $T=S/\sigma$ and recall that for $a\in S$ by  $[a]_{\sigma}\in T$ we denote the $\sigma$-class of $a$. The {\em underlying graph} ${\mathcal G}_{S,Y}$ of $S$ with respect to $Y$ is the labelled directed graph defined as follows. Vertices of ${\mathcal G}_{S,Y}$ are elements of $P(S)$ and for all $e,f\in P(S)$ edges of  ${\mathcal G}_{S,Y}$ from $e$ to $f$ are triples $(e,t,f)$ where $t\in T$ and there is a (necessarily unique) $a\in Y$ such that $e=a^+$, $f=a^*$ and $t=[a]_\sigma$. It is immediate that 
$a\mapsto (a^+,[a]_{\sigma},a^*)$ is a bijection between  $Y$ and edges of ${\mathcal G}_{S,Y}$.

Let $p=(e,t,f)\in {\mathrm E}({\mathcal G}_{S,Y})$ and let $a\in Y$ be such that $p=(a^+,[a]_{\sigma},a^*)$. For  a projection
$g\leq e$ we define the {\em restriction} $_g|p$ of $p$ to $g$ by 
\begin{equation}\label{eq:restriction}
_g|p = ((ga)^+, [ga]_{\sigma}, (ga)^*) = (g, [a]_{\sigma}, (ga)^*).
\end{equation} 
Similarly, for $h\leq f$ we define the {\em corestriction} $p|_h$ of $p$ to $h$ 
\begin{equation}\label{eq:corestriction}
 p|_h = ((ah)^+, [ah]_{\sigma}, (ah)^*) = ((ah)^+, [a]_{\sigma}, h).
\end{equation} 
These are well defined since $ga, ah\leq a$ and $Y$ is an order ideal.
\begin{lemma}\label{lem:graph}
${\mathcal G}_{S,Y}$ is a labelled directed graph with compatible restrictions and corestrictions.
\end{lemma}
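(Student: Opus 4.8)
The plan is to verify directly that ${\mathcal G}_{S,Y}$ meets every requirement of Definitions~\ref{def:labelled_graph_sets} and~\ref{def:labelled_graph}, using the explicit formulas \eqref{eq:restriction} and \eqref{eq:corestriction}, the identities \eqref{eq:rule1}, the matching-factorization lemmas, and --- at the crucial point --- the properness of the elements of $Y$. I would begin by settling the underlying graph structure. Since the edges of ${\mathcal G}_{S,Y}$ are by definition triples $(e,s,f)$, each is automatically determined by its endpoints and label, and the required loops $(e,1,e)$ exist because $P(S)\subseteq Y$ with $e^+=e^*=e$ and $[e]_\sigma=1$ for $e\in P(S)$. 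Every $t\in T=S/\sigma$ is realized by a path: writing $t=[a]_\sigma$ and taking a matching factorization $a=a_1\cdots a_n$ with $a_i\in Y$ (which exists since $Y$ is a proper generating ideal), the images $(a_i^+,[a_i]_\sigma,a_i^*)$ compose into a path of label $t$. I would also note at the outset that \eqref{eq:restriction} and \eqref{eq:corestriction} are well defined, since $ga\leq_l a$ and $ah\leq_r a$ give $ga,ah\leq a$, whence $ga,ah\in Y$ because $Y$ is an order ideal.

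The axioms (R1)--(R3), (R5) and their duals (CR1)--(CR3), (CR5) reduce to short computations with \eqref{eq:rule1}. For (R1) the label is preserved by \eqref{eq:restriction}, and $(ga)^*\leq a^*$ follows from $ga=(ga)a^*$ on applying $(se)^*=s^*e$; (R2) and (R5) use $ea=a$ (when $e=a^+$) and $fe=f$ (when $f\leq e$); (R3) uses $hg=h$ when $h\leq g$. The corestriction axioms are entirely symmetric.

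The heart of the argument is (R4) (and dually (CR4)), and this is where I expect the main obstacle. Given composable edges $p_i=(e_{i-1},t_i,e_i)$ attached to $a_i\in Y$, composability means $a_i^*=a_{i+1}^+$, so $a:=a_1\cdots a_n$ is a matching factorization; by Lemma~\ref{lem:matching1} it satisfies $a^+=e_0$, $a^*=e_n$, and $[a]_\sigma=t_1\cdots t_n$. The hypothesis that $(e_0,t_1\cdots t_n,e_n)$ is itself an edge produces a unique $b\in Y$ with these same three coordinates; hence $a\mathrel{\sigma} b$, $a^+=b^+$, $a^*=b^*$, and the properness of $b$ (Definition~\ref{def:proper_elem}) forces $a=b$. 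This identification is the key step of the whole proof. It then remains to show $(e_0'b)^*=e_n'$, where by \eqref{eq:restriction} the recursion gives $e_i'=(e_{i-1}'a_i)^*$. An easy induction using $(xy)^*=(x^*y)^*$ yields $(e_0'a_1\cdots a_k)^*=e_k'$, so with $a=b$ we get ${}_{e_0'}|(e_0,t_1\cdots t_n,e_n)=(e_0',t_1\cdots t_n,e_n')$, as required.

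Finally, for the compatibility axiom (C), write $a\in Y$ for the element attached to $c=(e,t,f)$ and let $g\leq e$, $h\leq f$. Then ${}_g|c$ and $c|_h$ are attached to $ga$ and $ah$, with ${\mathbf r}({}_g|c)=(ga)^*$ and ${\mathbf d}(c|_h)=(ah)^+$. Since $ga(ga)^*=ga$ and $(ah)^+ah=ah$, both $({}_g|c)|_{(ga)^*h}$ and ${}_{(ah)^+g}|(c|_h)$ are attached to the single element $gah$, giving the edges $((gah)^+,t,(ga)^*h)$ and $((ah)^+g,t,(gah)^*)$ respectively. The identities $(gah)^+=g(ah)^+$ and $(gah)^*=(ga)^*h$ from \eqref{eq:rule1}, together with commutativity of projections, show that both edges coincide with $(g\,{\mathbf d}(c|_h),t,{\mathbf r}({}_g|c)h)$, which is exactly (C). With the easy axioms and the properness-driven step (R4)/(CR4) in hand, this completes the verification that ${\mathcal G}_{S,Y}$ has compatible restrictions and corestrictions.
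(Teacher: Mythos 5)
Your proof is correct and follows essentially the same route as the paper: the same edge-by-edge verification of (R1)--(R5), (CR1)--(CR5) and (C) using the formulas \eqref{eq:restriction} and \eqref{eq:corestriction}, the identities \eqref{eq:rule1}, and the matching-factorization lemmas. The only point where you go beyond the paper's write-up is (R4), where you make explicit that properness of the elements of $Y$ forces $a_1\cdots a_n$ to coincide with the element of $Y$ attached to the composite edge $(e_0,t_1\cdots t_n,e_n)$ --- a step the paper uses implicitly when it writes the third coordinate of the restricted edge as $(e_0'a_1\cdots a_n)^*$.
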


\begin{proof} Since $P(S)\subseteq Y$ and by the definition of ${\mathcal G}_{S,Y}$, we have that for every $e\in P(S)$ the graph ${\mathcal G}_{S,Y}$ has an edge $(e,1,e)$. In addition, for each $t\in S/\sigma$ the graph ${\mathcal G}_{S,Y}$ has a path labelled by $t$.

Let $p=(a^+,[a]_{\sigma},a^*)\in {\mathrm E}({\mathcal G}_{S,Y})$ and $g\leq a^+$. Then $_g|p =  (g, [a]_{\sigma}, (ga)^*)$ and since $(ga)^*a^*=(gaa^*)^* = (ga)^*$, we have $(ga)^*\leq a^*$ so that (R1) holds.

Let $p=(a^+,[a]_{\sigma},a^*)\in {\mathrm E}({\mathcal G}_{S,Y})$. Then $_{a^+}|p = (a^+, [a]_{\sigma}, (a^+a)^*)= p$, so that (R2) holds.

Let us verify that condition (R3) holds. Let $h\leq g\leq a^+$. Then
$$
_h|(_g|p) = \space{} _h|((ga)^+,[ga]_{\sigma},(ga)^*) = ((hga)^+, [hga]_{\sigma}, (hga)^*) = ((ha)^+, [ha]_{\sigma}, (ha)^*) = \space{}_h|p,
$$
as needed.

To show (R4) let  $p_1=(a_1^+,[a_1]_{\sigma},a_1^*), \dots, p_n=(a_n^+,[a_n]_{\sigma},a_n^*) \in {\mathrm E}({\mathcal G}_{S,Y})$ where $a_i^*=a_{i+1}^+$ for all $i=1,\dots, n-1$ be such that $(a_1^+,[a_1\cdots a_n]_{\sigma}, a_n^*) \in  {\mathrm E}({\mathcal G}_{S,Y})$ where $n\geq 2$.  Put $e_0 = a_1^+$ and let  $e_0'\leq e_0$. 
Define 
$e_{1}' = {\mathbf r}(_{e_0'}|p_1)=(e_0'a_1)^*$, $e_{2}' = {\mathbf r}(_{e_{1}'}|p_{2})=((e_0'a_1)^*a_2)^* = (e_0'a_1a_2)^*$, $\dots,$ $e_n' = {\mathbf r}(_{e_{n-1}'}|p_n)=((e_0'a_1\cdots a_{n-1})^*a_n)^* = (e_0'a_1\cdots a_{n-1}a_n)^*$. Now  
$$_{e_0'}|(a_1^+,[a_1\cdots a_n]_{\sigma}, a_n^*)=(e_0', [a_1]_{\sigma}\cdots [a_n]_{\sigma}, (e_0'a_1\cdots a_{n-1}a_n)^*) = (e_0', [a_1]_{\sigma}\cdots [a_n]_{\sigma}, e_n'),$$ 
as needed.

If $e,f\in P(S)$ are such that $f\leq e$ then we have:
$$_f|(e,1,e) = {}_f|(e^+,[e]_{\sigma},e^*) = (f,[e]_{\sigma},(ef)^*) = (f,1,f),$$
so that (R5) holds.  

Axioms (CR1)--(CR5) follow dually.

We finally verify axiom (C). Let $p=(a^+,[a]_{\sigma},a^*)\in {\mathcal G}_{S,Y}$ and $g\leq a^+$, $h\leq a^*$. 
We calculate: 
\begin{align*}
(_g|p)|_{{\mathbf r}(_g|p)h} & = ((ga)^+, [a]_{\sigma}, (ga)^*)|_{(ga)^*h}  & (\text{by \eqref{eq:restriction}})\\
& = ((ga(ga)^*h)^+, [a]_{\sigma}, (ga)^*h) & (\text{by \eqref{eq:corestriction}})\\
& = ((gah)^+, [a]_{\sigma}, (ga)^*h) & (\text{by \eqref{eq:axioms_star}}) \\ 
& = (g(ah)^+, [a]_{\sigma}, (ga)^*h) &  (\text{by \eqref{eq:rule1}}) \\
& = (g{\mathbf d}(p|_h), [a]_{\sigma}, {\mathbf r}(_g|p)h). & 
(\text{by \eqref{eq:restriction} and \eqref{eq:corestriction})}
\end{align*}  
The equality  $_{{\mathbf d}(p|_h)g}|(p|_h) = (g{\mathbf d}(p|_h), [a]_{\sigma}, {\mathbf r}(_g|p)h)$ follows dually.
\end{proof}

Therefore, we can construct the Ehresmann semigroup $P(S) \rtimes_{{\mathcal G}_{S,Y}} S/\sigma$. 

The following theorem describes the structure of proper Ehresmann semigroups in terms of labelled directed graphs with compatible restrictions and corestrictions. We will show in Section~\ref{s:multiactions} that this result generalizes the structure result on proper restriction semigroups in terms of partial actions \cite{CG12, K15} and is thus a wide-ranging extension of the McAlister theorem on the structure of $E$-unitary inverse semigroups, formulated in terms of partial actions \cite{PR79, KL04}.

\begin{theorem} \label{th:main}
Let $S$ be a proper Ehresmann semigroup and let  $Y$ be its proper generating ideal. Then $S\simeq P(S) \rtimes_{{\mathcal G}_{S,Y}} S/\sigma$. 
\end{theorem}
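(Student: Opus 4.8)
The plan is to construct an explicit isomorphism $\Phi\colon S \to P(S)\rtimes_{{\mathcal G}_{S,Y}} S/\sigma$ and verify it respects the operations $\cdot$, $^+$, $^*$. The key structural fact I would exploit is Definition \ref{def:proper}(3): every $s\in S$ has a matching factorization $s=a_1\cdots a_n$ into elements of $Y$, unique up to the equivalence $\equiv$. Under the bijection $a\mapsto(a^+,[a]_\sigma,a^*)$ between $Y$ and $\mathrm{E}({\mathcal G}_{S,Y})$ established just before Lemma \ref{lem:graph}, such a factorization corresponds to a path in ${\mathcal G}_{S,Y}$: the matching condition $a_i^*=a_{i+1}^+$ is exactly the composability condition ${\mathbf r}(p_i)={\mathbf d}(p_{i+1})$, where $p_i=(a_i^+,[a_i]_\sigma,a_i^*)$. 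So I would define $\Phi(s)=[p_1\cdots p_n]\in {\widetilde{\mathcal C}}({\mathcal G}_{S,Y})=P(S)\rtimes_{{\mathcal G}_{S,Y}} S/\sigma$.

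First I would check $\Phi$ is well defined, which is the heart of the matter: I must show that the equivalence $\equiv$ on matching factorizations in $S$ matches the congruence $\sim$ on ${\mathcal C}({\mathcal G}_{S,Y})$. The relation $\to$ replaces a matching subproduct $s_i\cdots s_j$ by a single proper element $t=s_i\cdots s_j\in Y$ with $t^+=s_i^+$, $t^*=s_j^*$ (Lemma \ref{lem:matching1}) and $[t]_\sigma=[s_i]_\sigma\cdots[s_j]_\sigma$; this is precisely the generating relation $p_i\cdots p_j\equiv(e,\,[s_i\cdots s_j]_\sigma,\,f)$ defining $\sim$. So $\equiv$ and $\sim$ generate corresponding equivalences, giving a well-defined bijection on underlying sets (injectivity and surjectivity follow from uniqueness of factorizations and from the fact that every path represents a product of $Y$-elements, respectively). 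Next I would verify $\Phi$ is a $(2,1,1)$-morphism. For $^+$ and $^*$ this is routine from Lemma \ref{lem:matching1}: $\Phi(s^+)=\Phi(s_1^+)=[(s_1^+,1,s_1^+)]$, which by \eqref{eq:def_plus_star} equals $\Phi(s)^+$, and dually for $^*$.

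The main obstacle will be multiplicativity: showing $\Phi(st)=\Phi(s)\cdot\Phi(t)$ where the right-hand product uses \eqref{eq:def_product}, namely $c\cdot d = c|_{{\mathbf r}(c){\mathbf d}(d)}\,{}_{{\mathbf r}(c){\mathbf d}(d)}|d$. If $s=a_1\cdots a_m$ and $t=b_1\cdots b_k$ are matching $Y$-factorizations, the concatenation need not be matching at the junction since $a_m^*$ need not equal $b_1^+$. The idea is that forming $st$ and refactoring corresponds exactly to corestricting the $s$-path by the meet $a_m^*b_1^+={\mathbf r}(\Phi(s)){\mathbf d}(\Phi(t))$ and restricting the $t$-path by the same element. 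Concretely I would set $e=a_m^*b_1^+$, use Lemma \ref{lem:matching2} (and its dual) to produce matching refactorizations of $se$ and $et$ whose factors lie below the original ones, and check via \eqref{eq:restriction} and \eqref{eq:corestriction} that these refactored products have graph-images equal to $\Phi(s)|_{{\mathbf r}(\Phi(s)){\mathbf d}(\Phi(t))}$ and ${}_{{\mathbf r}(\Phi(s)){\mathbf d}(\Phi(t))}|\Phi(t)$. The concatenation of these two paths is then a matching $Y$-factorization of $st$, so its $\sim$-class is $\Phi(st)$, matching the definition of $\cdot$.

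The delicate point throughout is the identification of restriction and corestriction in $S$ (defined multiplicatively by $ga$ and $ah$ in \eqref{eq:restriction}, \eqref{eq:corestriction}) with those in the path category, since Lemma \ref{lem:matching2} moves projections through a whole factorization one factor at a time. I expect the cleanest argument is to first reduce to the case where both $s$ and $t$ are single elements of $Y$ using that $\Phi$ is already a well-defined bijection commuting with $\equiv$/$\sim$, and to verify the product formula \eqref{eq:def_product} on $Y$ directly against the definition $st=s|\cdot\,|t$ in $S$; associativity then lets the general case follow by induction on factorization length. I would lean on Lemma \ref{lem:matching3} to guarantee that an arbitrary product of $Y$-elements can always be rewritten as a matching one below the originals, so that every element of $S$ genuinely lies in the image of $\Phi$ and the refactorization steps stay inside $Y$.
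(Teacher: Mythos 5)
Your proposal is correct and follows essentially the same route as the paper: the paper's map $\Psi$ is exactly your $\Phi$ (sending a matching $Y$-factorization to the class of the corresponding path), well-definedness is checked by matching the relation $\to$ against the generating relation of $\sim$, and multiplicativity is proved by corestricting the first path and restricting the second by $a_m^*b_1^+$ using Lemma \ref{lem:matching2} and its dual, just as you outline. The only cosmetic difference is that the paper verifies multiplicativity directly on general matching factorizations rather than reducing to single elements of $Y$ first.
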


\begin{proof}  Let $a\in S$  and let $(a_1,\dots, a_n)$ be a matching factorization of $a$ into a product of elements of $Y$. We define $(a_1,\dots, a_n)\overline{\Psi}= (a_1^+, [a_1]_{\sigma}, a_2^+, [a_2]_{\sigma}, \dots a_{n}^+, [a_n]_{\sigma}, a_n^*) \in {\mathcal C}({\mathcal G}_{S,Y})$. Suppose that  $t=a_ia_{i+1}\cdots a_j\in Y$ where $1\leq i<j \leq n$ and let $(a_1,\dots, a_{i-1},t, a_{j+1},\dots, a_n)$ be the matching factorization of $a$ into a product of elements of $Y$ obtained by the replacement of $(a_i,a_{i+1},\dots, a_j)$ with $t$. The definition of the congruence $\sim$ on ${\mathcal C}({\mathcal G}_{S,Y})$ yields that $(a_1,\dots, a_{i-1},t,a_{j+1},\dots, a_n)\overline{\Psi} \sim (a_1,\dots, a_n)\overline{\Psi}$. It follows that $\overline{\Psi}$ maps equivalent matching factorizations of $a$ into products of elements of $Y$ to equivalent paths in  ${\mathcal C}({\mathcal G}_{S,Y})$. Therefore, we can define the map $\Psi\colon S\to  P(S) \rtimes_{{\mathcal G}_{S,Y}} S/\sigma$ by 
$$a\Psi = [(a_1^+, [a_1]_{\sigma}, a_2^+, [a_2]_{\sigma}, \dots, a_{n}^+, [a_n]_{\sigma}, a_n^*)],$$
where $(a_1,\dots, a_n)$ is a matching factorization of $a$ into a product of elements of $Y$.

It is immediate from the definitions that $\Psi$ is a bijection. Let us show that it preserves the multiplication. Let $a,b\in S$ and let $(a_1,\dots, a_n)$ and $(b_1,\dots, b_m)$ be matching factorizations of $a$ and $b$ into products of elements of $Y$. Let $a_{n}' = a_nb_1^+$, $a_{n-1}' = a_{n-1}(a_{n}')^+$, $\dots$,  $a_1' = a_1(a_{2}')^+$ and $b_1' = a_n^* b_1$, $b_2' = (b_1')^*\,b_2$, $\dots$, $b_m' = (b_{m-1}')^* \,b_m$. Note that $(a'_n)^* = (a_nb_1^+)^* = a_n^*b_1^+ = (a_n^*b_1)^+ = (b'_1)^+$. By  Lemma~\ref{lem:matching2} and its dual we see that  
$(a_1',\dots, a_n',b_1',\dots, b_m')$ is a matching factorization of $ab$ into a product elements of $Y$, thus  
$$
(ab)\Psi = [((a_1')^+, [a_1]_{\sigma},  \dots, (a_{n}')^+, [a_n]_{\sigma}, (a_n')^*, [b_1]_{\sigma}, (b_1')^*, \dots, [b_m]_{\sigma}, (b_m')^*)].
$$
From \eqref{eq:def_product} we have that
$$
(a\Psi) (b\Psi)  = [(a_1^+, [a_1]_{\sigma},  \dots, a_{n}^+,[a_n]_{\sigma}, a_n^*)]|_{a_n^*b_1^+} {}_{a_n^*b_1^+}| [(b_1^+, [b_1]_{\sigma},\dots, b_m^+, [b_m]_{\sigma}, b_m^*)] ,
$$
which, in view of \eqref{eq:restriction}, (RPath) and (CRPath),
equals
$$
 [((a_1')^+, [a_1]_{\sigma},  \dots, (a_{n}')^+, [a_n]_{\sigma}, (a_n')^*][((b_1')^+, [b_1]_{\sigma}, (b_1')^*, \dots, [b_m]_{\sigma}, (b_m')^*)] = (ab)\Psi.
$$
Let $(a_1,\dots, a_n)$ be a matching factorization of $a$ into a product of elements of $Y$. Then
\begin{equation*}
(a^+)\Psi = (a_1^+)\Psi = [(a_1^+,[a_1^+]_{\sigma},a_1^+)] 
= [(a_1^+, [a_1]_{\sigma}, a_2^+, [a_2], \dots, [a_n]_{\sigma}, a_n^*)]^+ = 
(a\Psi)^+.
\end{equation*}
It follows that $\Psi$ preserves the operation $+$. Dually it also preserves $*$. Hence $\Psi$ is a  $(2,1,1)$-isomorphism. 
\end{proof}

Combining Theorem \ref{th:construction} and Theorem \ref{th:main}, we obtain the following result.
\begin{theorem}
Let $T$ be a monoid, $E$ a semilattice and ${\mathcal G}$ a labelled directed graph with the vertex set $E$ and edges labelled by elements of $T$ which has compatible restrictions and corestrictions (see Definition \ref{def:labelled_graph}). Suppose that $c \mathrel{\sigma} d$ holds in $E\rtimes_{\mathcal G} T$ if and only if ${\mathbf l}(c) = {\mathbf l}(d)$ for all $c, d\in E\rtimes_{\mathcal G} T$. Then $E\rtimes_{\mathcal G} T$ is a proper Ehresmann semigroup with the proper generating ideal $Y=\{[(e,s,f)]\colon (e,s,f)\in {\mathrm E}({\mathcal G})\}$ and the semilattice of projections isomorphic to $E$ via $[(e,1,e)]\mapsto e$. In particular, $E\rtimes_{\mathcal G} T$ is a monoid if and only if $E$ has a top element. Furthermore, $(E\rtimes_{\mathcal G} T)/\sigma \simeq T$ as monoids. 

Conversely, every proper Ehresmann semigroup $S$ has this form (up to isomorphism). Specifically, if $Y$ is a proper generating ideal of $S$ then  $S\simeq P(S) \rtimes_{{\mathcal G}_{S,Y}} S/\sigma$. 
\end{theorem}

\section{Covers}\label{s:covers} 

An Ehresmann semigroup $T$ is called a {\em cover} of an Ehresmann semigroup $S$, if there is a surjective projection-separating morphism $\varphi\colon T\to S$. Let $X^+ = X^*\setminus \{1\}$ be the free $X$-generated semigroup.

\begin{theorem}\label{th:cover} Any Ehresmann semigroup has a proper cover.
\end{theorem}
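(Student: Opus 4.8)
The plan is to build, for an arbitrary Ehresmann semigroup $S$, an explicit proper Ehresmann semigroup together with a surjective projection-separating morphism onto $S$, using the machinery of Section~\ref{s:structure}. The natural choice is to take a free-monoid label, so I would set $T = X^*$ where $X = S$ (or a generating set of $S$) and let $\theta\colon X^* \to S$ be the canonical surjective semigroup morphism sending a word to its product in $S$. The semilattice of vertices will be $E = P(S)$. The core of the construction is to manufacture a labelled directed graph ${\mathcal G}$ with vertex set $P(S)$, edges labelled by elements of $X^*$, carrying compatible restrictions and corestrictions, so that the associated Ehresmann semigroup $P(S)\rtimes_{\mathcal G} X^*$ is proper (via Proposition~\ref{prop:proper_special}) and admits the desired morphism onto $S$.

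First I would define the edges of ${\mathcal G}$. For a word $w \in X^*$ with image $a = \theta(w) \in S$, and projections $e \in P(S)$, it is natural to let an edge carry the data of how $a$ acts on projections via the natural partial order: concretely, I would declare $(e, w, f)$ to be an edge precisely when there is an element of $S$ lying $\sigma$-below $a$ with domain projection $e$ and range projection $f$ — for instance $f = (ea)^*$ with $e \leq a^+$, and dually. The restriction $_g|(e,w,f)$ should then be $(g, w, (ga)^*)$ and the corestriction $(e,w,f)|_h$ should be $((ah)^+, w, h)$, mirroring formulas \eqref{eq:restriction} and \eqref{eq:corestriction} exactly as in the proof of Lemma~\ref{lem:graph}. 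The single-letter edges labelled by $x \in X$ (together with the identity edges $(e,1,e)$) generate all paths, and since $T = X^*$ is free on $X$ these are exactly the hypotheses of Proposition~\ref{prop:proper_special}; I would verify its compatibility condition, namely that any two edges with the same nontrivial label have a common upper bound in ${\mathrm E}({\mathcal G})$ — this should hold because both edges arise as restrictions/corestrictions of the maximal edge attached to a single generator $x$.

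With ${\mathcal G}$ in hand, Lemma~\ref{lem:graph}-style computations give compatible restrictions and corestrictions, Theorem~\ref{th:construction} produces the Ehresmann semigroup $P := P(S)\rtimes_{\mathcal G} X^*$, and Proposition~\ref{prop:proper_special} shows $P$ is proper with $P/\sigma \simeq X^*$. It remains to construct the covering morphism $\varphi\colon P \to S$. On generators I would send $[(e, x, f)] \mapsto e\,\theta(x)\,f$ (the evident element of $S$ with the prescribed domain and range projections lying $\sigma$-below $\theta(x)$), and extend multiplicatively along matching factorizations; I must check $\varphi$ is well defined on $\sim$-classes, which reduces to checking it respects the generating relation $p_1\cdots p_n \equiv (e_0, t_1\cdots t_n, e_n)$, i.e. that the product in $S$ of consecutive generator-images telescopes correctly. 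Surjectivity follows because $S$ is generated by $X$ and its projections, and projection-separation is immediate since $\varphi$ sends $[(e,1,e)]$ to $e$, giving a bijection on projections.

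The main obstacle I anticipate is the well-definedness and homomorphism property of $\varphi$, specifically reconciling the twisted multiplication \eqref{eq:def_product} in $P$ (which involves restricting and corestricting to the meet ${\mathbf r}(c){\mathbf d}(d)$ before composing) with the honest product in $S$. I would handle this by showing that for edges, $\varphi((e,x,f)) \cdot \varphi((f',y,h)) = \varphi\bigl((e,x,f)\cdot(f',y,h)\bigr)$, where the right side unpacks via corestriction of the first edge and restriction of the second to $f f'$; the left side then becomes a product in $S$ of two elements whose range/domain projections have been adjusted to $ff'$, and the Ehresmann identities \eqref{eq:rule1} together with the definitions \eqref{eq:restriction}, \eqref{eq:corestriction} should force agreement. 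A secondary subtlety is verifying that $\varphi$ genuinely respects $^+$ and $^*$ and the $\sim$-relation simultaneously; since projections map identically and labels multiply freely, this is where the freeness of $X^*$ and the uniqueness-up-to-equivalence of matching factorizations (Definition~\ref{def:proper}(3)) do the essential bookkeeping work.
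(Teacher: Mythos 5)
Your proposal follows essentially the same route as the paper's proof: the same graph on $P(S)$ with edges $(e,a,f)$ for generators $a$ subject to $e=(e\overline{a}f)^+$ and $f=(e\overline{a}f)^*$, the same restriction/corestriction formulas, properness obtained via Proposition~\ref{prop:proper_special} using the common upper bound $(\overline{a}^+,a,\overline{a}^*)$, and the same evaluation morphism $[(e_0,a_1,\dots,a_n,e_n)]\mapsto e_0\overline{a_1}e_1\cdots\overline{a_n}e_n$. The only adjustment is that edges must be labelled by elements of $X\cup\{1\}$ rather than by arbitrary words, exactly as Proposition~\ref{prop:proper_special} requires, which you in effect acknowledge when you pass to single-letter edges.
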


\begin{proof} Let $S$ be an Ehresmann semigroup and let $X$ be a $(2,1,1)$-generating set of $S$. For $v\in X^+$ let $\overline{v}$ be the value of $v$ in $S$. 
We construct a labelled directed graph ${\mathcal{G}}$ with vertex set $P(S)$ and edges labelled by elements of $X\cup\{1\}$ as follows. Edges labelled by $1$ are precisely the edges $(e,1,e)$ where $e\in P(S)$ and for $a\in X$ the edges labelled by $a$ are all the edges $(e,a,f)$ where $e,f\in P(S)$ are such that $e=(e\overline{a}f)^+$, $f= (e\overline{a}f)^*$.
We define the restriction and corestriction on ${\mathcal G}$ as follows. For an edge $(e,a,f)$ and $g\leq e$ we put
\begin{equation}\label{eq:restr_cover}
_g|(e,a,f) = (g, a, (g\overline{a})^*f) = ((g\overline{a}f)^+, a, (g\overline{a}f)^*)
\end{equation}
and for $h\leq f$ we put
\begin{equation}\label{eq:corestr_cover}
(e,a,f)|_h = (e(\overline{a}h)^+, a, h) = ((e\overline{a}h)^+, a, (e\overline{a}h)^*).
\end{equation}

The second equality in \eqref{eq:restr_cover} holds because 
\begin{align*}
(g\overline{a}f)^+ & = (ge\overline{a}f)^+ & (\text{since } g\leq e)\\
& = g(e\overline{a}f)^+  & (\text{by } \eqref{eq:rule1})\\
& = ge = g & (\text{since } (e\overline{a}f)^+=e \text{ and } g\leq e)
\end{align*}
and $(g\overline{a})^*f = (g\overline{a}f)^*$ by \eqref{eq:rule1}.
Observe that $_g|(e,a,f)$ is an edge since $(g\overline{a}(g\overline{a})^*f)^+ = (g\overline{a}f)^+ = g$ and $(g\overline{a}(g\overline{a})^*f)^* = (g\overline{a}f)^*$. Similarly, one shows that the second equality in \eqref{eq:corestr_cover} holds and that $(e,a,f)|_h$ is an edge.

Furthermore, if $f\leq e$ we put $(e,1,e)|_f = \space{} _f|(e,1,e)= (f,1,f)$.

Let us show that ${\mathcal G}$ is a labelled directed graph with compatible restrictions and corestrictions.
It is immediate that axioms (R1), (R2) and (R5) hold. 

To show that (R3) holds, we need to show that $_h|(_g|(e,a,f)) = {} _h|(e,a,f)$ whenever $h\leq g\leq e$. This reduces to showing that $(h,a, (h\overline{a})^*(g\overline{a})^*f) = (h,a, (h\overline{a})^*f)$. It is enough to verify the equality  $(h\overline{a})^*(g\overline{a})^*= (h\overline{a})^*$. We have: 
\begin{align*}
(h\overline{a})^*(g\overline{a})^* & = (h\overline{a}(g\overline{a})^*)^* & (\text{by } \eqref{eq:rule1})\\
& = (hg\overline{a}(g\overline{a})^*)^* & (\text{since } h\leq g)\\
& = (hg\overline{a})^* & (\text{by } \eqref{eq:axioms_star})\\
& = (h\overline{a})^*, &  (\text{since } h\leq g)
\end{align*}
as needed.

To show (R4), observe that if $p_1=(e_0,t_1,e_1), \dots, p_n=(e_{n-1},t_n,e_n) \in {\mathrm E}({\mathcal G})$ with $t_i\in X\cup \{1\}$ such that $(e_0,t_1\cdots t_n,e_n) \in  {\mathrm E}({\mathcal G})$ then at most one of the elements $t_i$ differs from $1$, since labels of edges of ${\mathcal G}$ belong to $X\cup \{1\}$. If $t_i=1$  for all $i$ then $e_i=e_0$ and $p_i=(e_0,1,e_0)$ for all  $i$. Hence $(e_0,t_1\cdots t_n,e_n) = (e_0, 1, e_0)$ and thus (R4) obviously holds. If $t_k\in X$ for some $k\in \{1,\dots, n\}$ and $t_i=1$ for all $i\neq k$ then $(e_0,t_1\cdots t_n,e_n)=(e_{k-1}, t_k, e_k)=p_k$, $p_i=(e_{k-1},1, e_{k-1})$ for $i<k$ and $p_i=(e_k,1,e_k)$ for $i>k$. It is now easy to see that (R4) holds in this case, too.  By symmetry, axioms (CR1)--(CR5) also hold.

We finally check that axiom (C) holds. Let $c=(e,a,f)\in {\mathrm{E}}(\mathcal G)$ where $a\in X$ and let $g\leq e$, $h\leq f$. Then $_g|c = (g,a, (g\overline{a})^*f)$. Thus  ${\mathbf r}(_{g}|c)h = (g\overline{a})^*fh = (g\overline{a})^*h$ which, in view of \eqref{eq:rule1}, equals $(g\overline{a}h)^*$. Similarly one checks that ${\mathbf d}(c|_h)g = (g\overline{a}h)^+$. Then:
\begin{align*}
(_{g}|c)|_{{\mathbf r}(_{g}|c)h} & =  (g,a,(g\overline{a})^*f)|_{(g\overline{a}h)^*}  &  (\text{by } \eqref{eq:restr_cover})\\
& = (g(\overline{a}(g\overline{a})^*h)^+,a,(g\overline{a}h)^*) & (\text{by } \eqref{eq:corestr_cover})\\
& = ((g\overline{a}(g\overline{a})^*h)^+,a,(g\overline{a}h)^*) & (\text{by } \eqref{eq:rule1})\\
& = ((g\overline{a}h)^+,a,(g\overline{a}h)^*) & (\text{by } \eqref{eq:axioms_star})
\end{align*}
and symmetrically $_{{\mathbf d}(c|_h)g}|(c|_h) = ((g\overline{a}h)^+,a,(g\overline{a}h)^*)$. If $a=1$, then $e=f$ and it is easy to see that both $(_{g}|c)|_{{\mathbf r}(_{g}|c)h}$ and  $_{{\mathbf d}(c|_h)g}|(c|_h)$ are equal to $(gh,1,gh)$. Thus (C) holds. 

We can thus form the Ehresmann semigroup $P(S)\rtimes_{\mathcal G} X^*$. Let us show that
it is proper. Let $(e,a,f)\in {\mathrm E}({\mathcal G})$. Then $e=(e\overline{a}f)^+ = e(\overline{a}f)^+\leq (\overline{a}f)^+ \leq \overline{a}^+$. 
It follows that $_e|(\overline{a}^+, a, \overline{a}^*)$ is defined and, in view of \eqref{eq:restr_cover} and $(e\overline{a})^*\overline{a}^* = (e\overline{a}\,\overline{a}^*)^* = (e\overline{a})^*$, it equals $(e, a, (e\overline{a})^*)$. Now, since $f=(e\overline{a}f)^* = (e\overline{a})^*f\leq (e\overline{a})^*$, it follows that the element $(e, a, (e\overline{a})^*)|_f$ is defined and, in view of \eqref{eq:corestr_cover}, it equals $(e,a,f)$. It follows that $(e,a,f)\leq (\overline{a}^+, a, \overline{a}^*)$. By Proposition~\ref{prop:proper_special} we have that the Ehresmann semigroup $P(S)\rtimes_{\mathcal G} X^*$ is proper and $(P(S)\rtimes_{\mathcal G} X^*)/\sigma \mathrel{\simeq} X^*$.

Define a map $\varphi\colon P(S)\rtimes_{\mathcal G} X^*\to S$ by 
\begin{align*}
& [(e,1,e)]\varphi=e,\\
&[(e_0, a_1,\dots, a_n,e_n)] \varphi = e_0\overline{a_1}e_1 \dots \overline{a_n}e_n.
\end{align*}
The map $\varphi$ is well-defined because all the paths, which are equivalent to $(e_0, a_1,\dots, a_n,e_n)$, are obtained from it by inserting or removing edges of type $(e,1,e)$ (and no other edges).
Let us show that $\varphi$ preserves the multiplication. It is immediate that $[(e,1,e)]\varphi[(f,1,f)]\varphi = ef = [(ef,1,ef)]\varphi = ([(e,1,e)][(f,1,f)])\varphi$. 

Let now $s = [(e_0, a_1,\dots, a_n,e_n)]$ and $t=[(f_0,b_1,\dots, b_k,f_k)]$ where $n,k\geq 1$ and $a_i, b_j\in X$ for all $i=1,\dots, n$ and $j=1,\dots, k$. Then by (RPath), (CRPath), (4.1) and (4.2) we have
$$
s|_{e_nf_0} = [(h_0,a_1,h_1,a_2,\ldots,a_n,h_n)],
$$
where $h_n=e_nf_0$ and $h_i = e_i(\overline{a_{i+1}} h_{i+1})^+$ for $i=0,\ldots,n-1$, and 
$$
_{e_nf_0}|t = [(g_0,b_1,g_1,b_2,\ldots,b_k,g_k)],
$$
where $g_0 = e_nf_0 = h_n$ and $g_i= (g_{i-1}\overline{b_i})^* f_i$ for $i=1,\ldots,k$. Applying \eqref{eq:def_product} we calculate:
\begin{align*}
(st)\varphi & = 
(s|_{e_nf_0} \, _{e_nf_0}|t) \varphi\\ 
& = [(h_0,a_1,\ldots,a_n,h_n,b_1,g_1,\ldots,b_k,g_k)]\varphi &  \\ 
& = h_0\overline{a_1}h_1\overline{a_2}\cdots \overline{a_n}e_nf_0\overline{b_1}g_1 \cdots \overline{b_k}g_k & (\text{by the construction of } \varphi)\\
& = e_0(\overline{a_1}h_1)^+\overline{a_1}h_1\overline{a_2}\cdots \overline{a_n}e_nf_0\overline{b_1}g_1 \cdots \overline{b_k}g_k & (\text{since } h_0= e_0(\overline{a_1}h_1)^+)
\end{align*}
\begin{align*}
& = e_0\overline{a_1}h_1\overline{a_2}h_2\cdots \overline{a_n}e_nf_0\overline{b_1}g_1 \cdots \overline{b_k}g_k & (\text{since } (\overline{a_1}h_1)^+\overline{a_1}h_1 = \overline{a_1}h_1)\\
& = e_0\overline{a_1}e_1 (\overline{a_2}h_2)^+\overline{a_2}h_2\cdots \overline{a_n}e_nf_0\overline{b_1}g_1\cdots\overline{b_k}g_k &  (\text{since } h_1= e_1(\overline{a_2}h_2)^+)\\
& \dots \\ 
& = e_0\overline{a_1}e_1\overline{a_2}e_2\cdots \overline{a_n}e_nf_0\overline{b_1}g_1 \cdots \overline{b_k}g_k.& 
\end{align*}
Starting from $g_k$ and moving leftwards we then similarly arrive at 
$$
(st)\varphi =  e_0\overline{a_1}e_1\overline{a_2}e_2\cdots \overline{a_n}e_n f_0\overline{b_1}f_1 \cdots \overline{b_k}f_k,
$$
which is precisely $s\varphi t\varphi$.

The remaining two cases where $s = [(e_0, a_1,\dots, a_n,e_n)]$ with $a_i\in X$ for all $i=1,\dots, n$ and $t=[(f,1,f)]$, and the dual case, are treated similarly. 

Let us show that $\varphi$ preserves the unary operation $^+$. If $s=[(e,1,e)]$ it is immediate that $(s^+)\varphi = (s\varphi)^+=e$. Let $s = [(e_0, a_1,\dots, a_n,e_n)]$ with $a_i\in X$ for all $i=1,\dots, n$. Then $(s^+)\varphi = [(e_0,1,e_0)]\varphi = e_0$. On the other hand, we have:
\begin{align*}
(s\varphi)^+ & = 
(e_0\overline{a_1}e_1\cdots e_{n-1}\overline{a_n}e_n)^+ & (\text{by the definition of } \varphi) \\ 
& = (e_0\overline{a_1}e_1\cdots e_{n-2}\overline{a_{n-1}} (e_{n-1}\overline{a_n}e_n)^+)^+ & (\text{applying the third identity of } \eqref{eq:axioms_plus}) \\ 
& = (e_0\overline{a_1}e_1\cdots e_{n-2}\overline{a_{n-1}}e_{n-1})^+ & (\text{by the definition of } {\mathcal{G}}) \\ 
& \dots & \\ 
& = (e_0\overline{a_1}e_1)^+= e_0.   &
\end{align*}
Hence $(s\varphi)^+=(s^+)\varphi$, as needed. By a dual argument, $\varphi$ preserves also the  operation $^*$.

Since $P(P(S)\rtimes_{\mathcal{G}} X^*) = \{[(e,1,e)]\colon e\in P(S)\}$ and $[(e,1,e)] \varphi = e$, it is immediate that $\varphi$ is projection separating.

Let us show that $\varphi$ is surjective. It suffices to show that every element $s\in S$ can be written as a product $s=e_0\overline{x_1}e_1 \cdots \overline{x_n}e_n$ where $e_i\in P(S)$,
$x_i\in X$ and for all $i=1,\dots n:$ $(e_{i-1}\overline{x_i}e_i)^+=e_{i-1}$, $(e_{i-1}\overline{x_i}e_i)^*=e_i$. Since $S$ is $(2,1,1)$-generated by $X$, every element of $S$ can be written as a product of projections and elements of the multiplicative subsemigroup of $S$ generated by $X$. Since, for $x,y\in X$ we have $\overline{xy} = (\overline{x}^+ \, \overline{x} \, \overline{x}^*\overline{y}^+) (\overline{x}^*\overline{y}^+ \, \overline{y} \, \overline{y}^*)$,  any $s\in S$ can be written as $s=(f_0\overline{x_1}f_1) (f_1\overline{x_2}f_2) \cdots (f_{n-1}\overline{x_n}f_n)$ where $f_i\in P(S)$ and $x_i\in X$. Lemma \ref{lem:matching3} now implies that there are $s_i\leq f_{i-1}\overline{x_i}f_i$, $i=1,\dots, n$, such that $(s_1,\dots, s_n)$ is a matching factorization of $s$. Let $i\in \{1,\dots, n\}$ and note that $s_i = g\overline{x_i}h$ for some $g,h\in P(S)$. Put $e_{i-1} = g(\overline{x_i}h)^+$ and $e_i = (g\overline{x_i})^*h$. We then can write $s_i = e_{i-1} \overline{x_i}e_i$ where $s_i^+ =  (g\overline{x_i}h)^+ =  g(\overline{x_i}h)^+ = e_{i-1}$ and  $s_i^* = (g\overline{x_i})^*h = e_i$. Hence $s= e_0\overline{x_1}e_1 \cdots \overline{x_n}e_n$ is the required factorization of $s$.
\end{proof}

\begin{remark} If a proper Ehresmann semigroup $S$ has the identity element then by Lemma~\ref{lem:monoid} it coincides with the maximum projection $1_{P(S)}$. Then the proper Ehresmann semigroup $P(S)\rtimes_{\mathcal G} X^*$ has the identity element $[(1_{P(S)}, 1, 1_{P(S)})]$. The definition of the covering map $\varphi$ in the proof of Theorem \ref{th:cover} yields that $[(1_{P(S)}, 1, 1_{P(S)})]\varphi = 1_{P(S)}$. It follows that every proper Ehresmann monoid (in the signature $(2,1,1)$) has a proper cover which is a monoid and the covering morphism preserves the identity element. Proof of Theorem~\ref{th:cover} holds true also if we consider Ehresmann monoids in the extended signature $(2,1,1,0)$, so we conclude that every proper Ehresmann monoid (in the signature $(2,1,1,0)$) has a proper cover.
\end{remark}
\section{Connection with the work \cite{BGG15}} \label{s:connection} 

Suppose that $P(S)$ has the maximum element and consider the covering proper Ehresmann monoid \mbox{$P(S)\rtimes_{\mathcal{G}} X^*$ from the proof of Theorem~\ref{th:cover} in the signature $(2,1,1,0)$.} The construction of \mbox{$P(S)\rtimes_{\mathcal{G}} X^*$} implies that it is generated by elements $[(\overline{x}^+, x, \overline{x}^*)]$, where $x\in X$, and projections. In addition, the monoid homomorphism $\varphi\colon X^*\to P(S)\rtimes_{\mathcal{G}} X^*$, induced by the map $x\mapsto [(\overline{x}^+, x, \overline{x}^*)]$, is injective, because for any $v\in X^*$ we have $v={\mathbf l}(v\varphi)$ (note that $1\varphi = [(1_{P(S)},1,1_{P(S)}]$). It follows that $P(S)\rtimes_{\mathcal{G}} X^*$ is $X^*$-generated in the sense of \cite{BGG15}. Moreover, since different elements of $X^*\varphi$ have different labels, it follows that the congruence $\sigma$ separates $X^*$. Hence $P(S)\rtimes_{\mathcal{G}} X^*$ is strongly $X^*$-proper in the sense of \cite{BGG15}.

We now briefly recall the definition of the construction of the Ehresmann monoid ${\mathcal P}(T,Y)$ by Branco, Gomes and Gould from \cite{BGG15}. Let $T$ be a monoid, $Y$ a semilattice with a top element $1_Y$ and suppose that $T$ acts on $Y$ by order-preserving maps from the left and from the right subject to certain compatibility conditions. Denote the right action by $\circ$ and the left action by $\cdot$. These actions, together with the natural action of $Y$ on itself by the multiplication, extend to the right and the left actions, also denoted by $\circ$ and $\cdot$, of the semigroup free product $T*Y$ on $Y$. For any $u\in T*Y$ denote $u^+ = u\cdot 1_Y$ and $u^* = 1_Y\circ u$. Then one defines ${\mathcal P}(T,Y)$ as the quotient of $T*Y$ by a congruence $\sim$ which is the minimum congruence with the property that the operations $*$ and $+$ can be pushed down to the quotient $T*Y/\sim$ and this quotient can be endowed with the structure of an Ehresmann monoid. For more details, see \cite{BGG15}.

Let us show that the covering Ehresmann semigroup $P(S)\rtimes_{\mathcal{G}} X^*$ from the proof of Theorem \ref{th:cover} is isomorphic to the semigroup ${\mathcal P}(X^*, P(S))$ from \cite[Section 5]{BGG15}. We argue similarly to the last paragraph of the proof of Theorem \ref{th:cover}. Firstly, since ${\mathcal P}(X^*, P(S))$ is $X^*$-generated (in the sense of \cite{BGG15}), every element of $s\in {\mathcal P}(X^*, P(S))$ which is not a projection can be written as a product $s=(f_0x_1f_1) (f_1x_2f_2) \cdots (f_{n-1}x_nf_n)$ where $f_i\in P(S)$ and $x_i\in X$. Furthermore, applying Lemma \ref{lem:matching3}, this can be written as $s=(e_0x_1e_1) (e_1x_2e_2) \cdots (e_{n-1}x_ne_n) = e_0x_1e_1\cdots e_{n-1}x_ne_n$ where for all $i=1,\dots n:$ $(e_{i-1}x_ie_i)^+=e_{i-1}$, $(e_{i-1}x_ie_i)^*=e_i$. It is natural to say that the elements $e$, where $e\in P(S)$, and $e_0x_1e_1\cdots e_{n-1}x_ne_n$ are in the {\em canonical form}. 
It follows from \cite[Theorem 5.2]{BGG15} that the map ${\mathcal P}(X^*, P(S)) \to P(S)\rtimes_{\mathcal{G}} X^*$ given on the elements in the canonical form by $e\mapsto ([e,1,e)]$, where $e\in P(S)$, and
$$e_0x_1e_1\cdots e_{n-1}x_ne_n \mapsto [(e_0, x_1,e_1,\dots, e_{n-1}, x_n, e_n)],$$ 
is a $(2,1,1, 0)$-morphism, and it is obvious that it is a bijection. Thus it is a $(2,1,1,0)$-isomorphism of ${\mathcal P}(X^*, P(S))$ and $P(S)\rtimes_{\mathcal{G}} X^*$. In particular, the canonical form of $s\in   {\mathcal P}(X^*, P(S))$ is well defined.

Recall that the {\em free Ehresmann monoid} $FEM(X)$ and the {\em free Ehresmann semigroup} on the set $X$ are defined as the free objects in the varieties of $X$-generated Ehresmann monoids and of $X$-generated Ehresmann semigroups. Elegant combinatorial models for $FEM(X)$ and $FES(X)$ were proposed in \cite{Kam11} by Kambites. We note that $FEM(X)$ can be obtained from $FES(X)$ by adjoining of an external identity element.

We arrive at the following statement.

\begin{proposition}\label{prop:free_proper} The free Ehresmann monoid $FEM(X)$ and the free Ehresmnann semigroup $FES(X)$ are proper.
\end{proposition}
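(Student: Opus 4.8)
The plan is to apply the cover construction of Theorem~\ref{th:cover} to $S=FES(X)$ itself and to show that, in this free case, the covering morphism is in fact an isomorphism. Since the cover is proper, this will transport properness back to $FES(X)$. Concretely, I would take $X$ as a $(2,1,1)$-generating set of $S=FES(X)$, form the proper cover $C=P(S)\rtimes_{\mathcal G}X^*$, and use the surjective, projection-separating morphism $\varphi\colon C\to S$ produced in the proof of Theorem~\ref{th:cover}. The whole argument then hinges on constructing an inverse to $\varphi$.

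To build the inverse, I would use the universal property of the free Ehresmann semigroup to define a $(2,1,1)$-morphism $\psi\colon S\to C$ sending each generator $x\in X$ to the edge $[(\overline{x}^+,x,\overline{x}^*)]$; this is a legitimate edge of ${\mathcal G}$ because $\overline{x}^+\,\overline{x}\,\overline{x}^*=\overline{x}$, so $(\overline{x}^+\,\overline{x}\,\overline{x}^*)^+=\overline{x}^+$ and $(\overline{x}^+\,\overline{x}\,\overline{x}^*)^*=\overline{x}^*$. A one-line computation on generators shows $\varphi\psi=\mathrm{id}_S$: indeed $\varphi\psi(x)=\varphi[(\overline{x}^+,x,\overline{x}^*)]=\overline{x}^+\,\overline{x}\,\overline{x}^*=\overline{x}=x$, and since $\varphi\psi$ and $\mathrm{id}_S$ are $(2,1,1)$-morphisms agreeing on the generators, they coincide. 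In particular $\psi$ is injective.

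It then remains to prove that $\psi$ is surjective. By Theorem~\ref{th:construction}\eqref{i:proper} the semigroup $C$ is generated by its projections $[(e,1,e)]$, $e\in P(S)$, together with the edges $[(e,a,f)]$ with $a\in X$, so it suffices to place these generators in the image of $\psi$. For the projections, I would note that $\varphi$ maps $P(C)$ onto $P(S)$ and, being projection-separating, does so bijectively; since $\varphi\psi=\mathrm{id}_S$, the restriction $\psi|_{P(S)}$ is precisely the inverse of this bijection, so every projection of $C$ lies in $\psi(S)$. For a generating edge $[(e,a,f)]$ with $a\in X$, the proof of Theorem~\ref{th:cover} establishes $[(e,a,f)]\leq[(\overline{a}^+,a,\overline{a}^*)]=\psi(a)$; by the very definition of the natural partial order this means $[(e,a,f)]=p\,\psi(a)\,q$ for some projections $p,q\in P(C)$, and all three factors already lie in $\psi(S)$. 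Hence $[(e,a,f)]\in\psi(S)$, and $\psi$ is onto.

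Combining injectivity and surjectivity, $\psi$ is a $(2,1,1)$-isomorphism, so $FES(X)\cong P(S)\rtimes_{\mathcal G}X^*$, and the latter is proper by Proposition~\ref{prop:proper_special} (as established in the proof of Theorem~\ref{th:cover}); properness of $FES(X)$ follows. I expect the main obstacle to be the surjectivity step, and within it the treatment of the projections: the crucial point is that the projection-separating property of $\varphi$, together with surjectivity, forces $\varphi$ to restrict to a \emph{bijection} on projections, which is exactly what guarantees that $\psi$ reaches every projection of the cover. Once the projections are accounted for, the reduction of the edge generators is routine, following immediately from the inequality already recorded in the cover construction.
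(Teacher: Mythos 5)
Your proof is correct, but it takes a genuinely different route from the paper's. The paper deduces Proposition~\ref{prop:free_proper} from the identification (carried out in Section~\ref{s:connection}) of the cover $P(S)\rtimes_{\mathcal G}X^*$ with the monoid ${\mathcal P}(X^*,P(S))$ of \cite{BGG15}, and then invokes the external result \cite[Theorem 6.1]{BGG15} asserting that ${\mathcal P}(X^*,P(FES(X)))\cong FES(X)$; the isomorphism of the cover with $FES(X)$ is thus imported rather than exhibited. You instead build an explicit section $\psi$ of the covering morphism $\varphi$ via the universal property of the free object, getting $\varphi\psi=\mathrm{id}$ on generators and hence injectivity of $\psi$ for free, and then prove surjectivity by showing $\psi(S)$ contains a generating set of the cover: the projections (using that a projection-separating surjection restricts to a bijection $P(C)\to P(S)$, whose inverse must be $\psi|_{P(S)}$ since $\psi$ sends projections to projections) and the edge generators (using the inequality $[(e,a,f)]\leq[(\overline{a}^+,a,\overline{a}^*)]$ already recorded in the proof of Theorem~\ref{th:cover}, together with the definition of the natural partial order). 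All the ingredients you use --- the edge description of $P(C)$ and the form of $\varphi$ on projections from Theorem~\ref{th:construction}, the inequality from the cover construction, and the fact that every element of the cover is a product of projections and $X$-labelled edges --- are established in the paper, so your argument is self-contained and does not depend on \cite{BGG15}. What the paper's route buys in exchange is the identification with ${\mathcal P}(X^*,P(S))$ itself, which it wants anyway for the normal-form discussion in Section~\ref{s:connection}; what yours buys is independence from the external isomorphism theorem and an explicit inverse to the covering map.
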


\begin{proof} Since $FEM(X)$ is $(2,1,1,0)$-generated by $X$, it has a proper cover $P(FEM(X))\rtimes_{\mathcal G} X^*$ constructed in the proof of Theorem \ref{th:cover}. By the above argument we have that $P(FES(X))\rtimes_{\mathcal G} X^*$ is isomorphic to ${\mathcal P}(X^*, P(FEM(X)))$, which, by \cite[Theorem 6.1]{BGG15}, is isomorphic to $FEM(X)$. The constructions imply that the covering morphism from \mbox{$P(FEM(X))\rtimes_{\mathcal G} X^*$} to $FEM(X)$ is in fact a $(2,1,1,0)$-isomorphism.
Detaching the external identity element in both $P(FEM(X))\rtimes_{\mathcal G} X^*$ and $FEM(X)$, we obtain the needed statement for $FES(X)$.
\end{proof}

\section{Special cases}\label{s:multiactions}
In this section we define partial multiactions of a monoid $T$ on a set $X$ and show that they are in a bijection with premorphisms $T\to {\mathcal B}(X)$. We then define partial multiactions of monoids on semilattices with compatible restrictions and corestrictions and show that these are a special case of labelled directed graphs with compatible restrictions and corestrictions. This leads to a structure result for proper left restriction (or proper right restriction) Ehresmann semigroups which generalizes the known structure result for proper restriction semigroups in terms of partial actions.

\subsection{Partial multiactions of monoids on sets}   
\begin{definition}\label{def:part_multiaction} (Partial multiactions) 
Let $T$ be a monoid and $X$ a non-empty set. Let ${\mathcal G}$ be a labelled directed graph with vertex set $X$ and edges labelled by elements of $T$ (see Definition \ref{def:labelled_graph_sets}). Suppose that for each $t\in T$ there is an edge in ${\mathcal G}$ labelled by $t$
and the following condition holds:
\begin{enumerate}
\item[(PM)] If $(x,t,y), (y,s,z)\in {\mathrm{E}}({\mathcal G})$ then $(x,ts,z)\in {\mathrm{E}}({\mathcal G})$.
\end{enumerate}
We define the edges $(x,t,y), (u,s,v)\in {\mathrm{E}}({\mathcal G})$ to be {\em composable} if $y=u$ in which case we put $(x,t,y)(y,s,z)=(x,ts,z)$. It is easy to verify this makes ${\mathcal G}$ a category with objects $X$ and arrows ${\mathrm{E}}({\mathcal G})$. We call it a {\em partial multiaction} of $T$ on $X$. 
\end{definition}

The identity arrow  of ${\mathcal G}$ at $x$ is $(x,1,x)$. We have ${\mathbf d}(x,t,y) = x$ and ${\mathbf r}(x,t,y) = y$.

\begin{remark} Recall \cite{L11} that the {\em Cauchy completion} of a semigroup $S$ is the category $$C(S) = \{(e, s, f )\in E(S) \times S \times E(S)\colon esf = s\}$$
with the composition rule $(e,s,f)(f,t,g)=(e, st,g)$. If we extend the definition of a partial multiaction from monoids to semigroups by requiring that (PM) holds, then $C(S)$ 
becomes an example of  a partial multiaction of the semigroup $S$ with $X=E(S)$.
\end{remark}

Let us show that the notion of a partial multiaction of $T$ on $X$ subsumes those of both left and right partial actions of $T$ on $X$. Recall that a {\em right partial action of} $T$ on $X$ is a partially defined map $X\times T \to X$, $(x,t)\mapsto x\cdot t$, such that for every $t\in T$ there is $x\in X$ such that $x\cdot t$ is defined and:
\begin{enumerate} [(i)]
\item if $x\cdot s$ and $(x\cdot s)\cdot t$ are defined then $x\cdot st$ is defined and $(x\cdot s)\cdot t=x\cdot st$.
\item for all $x\in X$ we have that $x\cdot 1$ is defined and $x\cdot 1 =x$.
\end{enumerate} 
{\em Left partial actions} of $T$ on $X$ are defined dually.

Suppose that the (partially defined) assignment $(x,t)\mapsto x\cdot t$ defines a right partial action of $T$ on $X$ and define ${\mathcal G}$ to be the labelled directed graph with vertex set $X$, edges labelled by elements of $T$ and ${\mathrm{E}}({\mathcal G}) = \{(x,t,x\cdot t) \colon x\cdot t \text{ is defined}\}$. Then ${\mathcal G}$ is a partial multiaction of $T$ on $X$ and also satisfies the following condition:
\begin{enumerate}
\item[(LD)] (Left determinism) For all $t\in T$ and $x\in X$ there exists at most one $y\in X$ such that $(x,t,y)\in {\mathrm{E}}({\mathcal G})$.
\end{enumerate} 

In this way right partial actions of $T$ on $X$ are in a bijective correspondence with partial multiactions of $T$ on $X$ which satisfy condition (LD). 
Dually, given a left partial action $(t,x)\mapsto t\cdot x$ of $T$ on $X$ we define ${\mathcal G}$ to be the labelled directed graph with vertex set $X$, edges labelled by elements of $T$ and ${\mathrm{E}}({\mathcal G}) = \{(t\cdot x,t,x) \colon t\cdot x \text{ is defined}\}$. Then ${\mathcal G}$ is a partial multiaction of $T$ on $X$ and also satisfies the following condition:

\begin{enumerate}
\item[(RD)] (Right determinism) For all $t\in T$ and $y\in X$ there exists at most one $x\in X$ such that $(x,t,y)\in {\mathrm{E}}({\mathcal G})$.
\end{enumerate} 

Hence left partial acitons of $T$ on $X$ are in a bijective correspondence with partial multiactions of $T$ on $X$ which satisfy condition (RD).

\subsection{Partial multiactions and premorphisms} We now introduce the notion of a {\em premorphism} from a monoid $T$ to the monoid ${\mathcal B}(X)$ and relate it with the notion of a  partial multiaction of $T$ on $X$. 

\begin{definition}[Premorphisms]
Let $T$ be a monoid and $X$ a set. A map $\varphi\colon T\to {\mathcal B}(X)$, $t\mapsto \varphi_t$,  will be called a {\em  premorphism} provided that for all $t\in T$, $\varphi_t\neq\varnothing$, and
\begin{enumerate}
\item[(Prem1)]  $id_X \subseteq \varphi_1$,
\item[(Prem2)]  for all $s,t\in T, \varphi_s\varphi_t\subseteq \varphi_{st}$.
\end{enumerate}
\end{definition}

\begin{proposition}\label{prop:terminology} \mbox{}
\begin{enumerate}
\item \label{i:22m1} Let ${\mathcal G}$ be a partial multiaction of $T$ on $X$. For every $t\in T$ define $\varphi_t = \{(x,y)\in X\times X\colon (x,t,y)\in {\mathrm{E}}({\mathcal G})\}$. Then $\varphi\colon T\to {\mathcal B}(X)$, $t\mapsto \varphi_t$, is a premorphism.
\item \label{i:22m2} Let $\varphi\colon T\to {\mathcal B}(X)$ be a premorphism and define ${\mathcal G}$ to be the category with objects $X$ and arrows ${\mathrm{E}}({\mathcal G}) = \{(x,t,y)\colon t\in T, (x,y)\in \varphi_t\}$. Then ${\mathcal G}$ is a partial multiaction of $T$ on $X$.
\item \label{i:22m3} A partial multiaction ${\mathcal G}$ of $T$ on $X$ satisfies condition (LD) (respectively condition (RD)) if and only if $\varphi(T)\subseteq {\mathcal{PT}}(X)$ (respectively $\varphi(T)\subseteq {\mathcal{PT}}^c(X)$) where $\varphi\colon T\to {\mathcal B}(X)$ is the premoprhism defined in part (\ref{i:22m1}).
\item \label{i:22m4} A partial multiaction ${\mathcal G}$ of $T$ on $X$ satisfies both (LD) and (RD)  if and only if $\varphi(T) \subseteq {\mathcal I}(X)$ where $\varphi\colon T\to {\mathcal B}(X)$ is the premoprhism defined in part (\ref{i:22m1}).
\end{enumerate}
\end{proposition}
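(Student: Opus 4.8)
The plan is to treat the four parts as a sequence of direct dictionary translations between the combinatorial language of $\mathcal{G}$ and the relational language of ${\mathcal B}(X)$. Throughout I would fix the composition convention $\varphi_s\varphi_t = \{(x,z)\colon \exists y\in X,\ (x,y)\in\varphi_s \text{ and } (y,z)\in\varphi_t\}$, consistent with the notation $x\tau$ introduced in Section~\ref{subs:binary_rel}, so that the product order in $\varphi_s\varphi_t$ matches the label order $st$.

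For part~\eqref{i:22m1}, I would verify the three clauses of the premorphism definition in turn. Since $\mathcal{G}$ is a partial multiaction, for each $t\in T$ there is an edge labelled by $t$, whence $\varphi_t\neq\varnothing$. The required loops $(x,1,x)\in{\mathrm{E}}(\mathcal{G})$ give $(x,x)\in\varphi_1$ for every $x$, so $id_X\subseteq\varphi_1$, which is (Prem1). For (Prem2), I would take $(x,z)\in\varphi_s\varphi_t$, unwind the definition to obtain $y$ with $(x,s,y),(y,t,z)\in{\mathrm{E}}(\mathcal{G})$, and apply (PM) to conclude $(x,st,z)\in{\mathrm{E}}(\mathcal{G})$, i.e.\ $(x,z)\in\varphi_{st}$.

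Part~\eqref{i:22m2} runs the same correspondence backwards. I would first check that the prescribed $\mathcal{G}$ is a labelled directed graph in the sense of Definition~\ref{def:labelled_graph_sets}: its edges are by construction triples, hence determined by their endpoints and label; the loops $(x,1,x)$ come from $id_X\subseteq\varphi_1$ via (Prem1); and the existence of an edge labelled by each $t$ follows from $\varphi_t\neq\varnothing$. Finally (PM) is exactly the content of (Prem2): if $(x,t,y),(y,s,z)\in{\mathrm{E}}(\mathcal{G})$ then $(x,z)\in\varphi_t\varphi_s\subseteq\varphi_{ts}$, so $(x,ts,z)\in{\mathrm{E}}(\mathcal{G})$.

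For parts~\eqref{i:22m3} and~\eqref{i:22m4} I would read off the defining inequalities of the submonoids of Section~\ref{subs:binary_rel}. Since $\varphi_t\in{\mathcal{PT}}(X)$ means $|x\varphi_t|\leq 1$ for all $x$, and $|x\varphi_t|$ counts the $y$ with $(x,t,y)\in{\mathrm{E}}(\mathcal{G})$, the condition $\varphi(T)\subseteq{\mathcal{PT}}(X)$ is literally (LD); dually, $|\varphi_t y|\leq 1$ matches (RD) and $\varphi(T)\subseteq{\mathcal{PT}}^c(X)$, and part~\eqref{i:22m4} is then immediate from ${\mathcal I}(X)={\mathcal{PT}}(X)\cap{\mathcal{PT}}^c(X)$. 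None of these steps presents a genuine obstacle; the only point requiring care is the bookkeeping of the two directional conventions $x\tau$ versus $\tau y$, so that (LD) is paired with ${\mathcal{PT}}(X)$ and (RD) with ${\mathcal{PT}}^c(X)$ rather than the reverse.
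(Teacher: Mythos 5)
Your proposal is correct and matches the paper's proof, which simply states that parts (1)--(3) are direct consequences of the definitions and that part (4) follows from (3) together with ${\mathcal{PT}}(X)\cap{\mathcal{PT}}^c(X)={\mathcal I}(X)$. You have merely written out the routine verifications (including the correct pairing of the composition convention with the label order and of (LD) with ${\mathcal{PT}}(X)$) that the paper leaves implicit.
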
 

\begin{proof} Parts (1), (2), and (3) are direct consequences of  the definitions. Part (4) follows from (3) and ${\mathcal{PT}}(X)\cap {\mathcal{PT}}^c(X) = {\mathcal I}(X)$.
\end{proof}

We call a partial multiaction ${\mathcal{G}}$ and the premorphism $\varphi$ defined in Proposition \ref{prop:terminology}(\ref{i:22m1}) {\em attached} to each other.

\begin{remark} Suppose that a partial multiaction ${\mathcal G}$ satisfies (LD) and (RD) and let $\varphi\colon T\to {\mathcal B}(X)$ be its attached premorphism. Then for each $(x,t,y)\in {\mathrm{E}}({\mathcal G})$ we have $y=x\varphi_t$ or, equivalently, $x=y\varphi_t^{-1}$ where $\varphi_t^{-1}$ is the reverse relation to $\varphi_t$ and coincides with the inverse of $\varphi_t$ in ${\mathcal I}(X)$. Therefore, ${\mathrm{E}}({\mathcal G}) = \{(x,t,x\varphi_t)\colon t\in T, x\in \dom(\varphi_t)\} = \{(y\varphi_t^{-1},t,y)\colon t\in T, y\in \ran(\varphi_t)\}$. Also, ${\mathrm{E}}({\mathcal G}) = \{(\varphi_ty, t, y)\colon t\in T, y\in \ran(\varphi_t)\} = \{(x,t,\varphi_t^{-1}x)\colon t\in T, x\in\dom(\varphi_t)\}$.
\end{remark}

Observe that if ${\mathcal G}$ satisfies condition (LD) then for its attached premorphism $\varphi\colon T\to {\mathcal B}(X)$ condition (Prem1) reduces to $\varphi_1={\mathrm{id}}_X$. In addition, we have $\varphi_s\subseteq \varphi_t$ if and only $\varphi_s\leq \varphi_t$, where $\leq$ is the natural partial order on ${\mathcal{PT}}(X)$. This leads to the following corollary of Proposition \ref{prop:terminology}.

\begin{corollary}\label{cor:multiact_prem}
There is a bijective correspondence between partial multiactions of $T$ on $X$ and premorphisms $T\to {\mathcal B}(X)$. This correspondence subsumes the bijective correspondences between:
\begin{enumerate}
\item \label{i:23m1a} right (respectively left) partial actions of $T$ on $X$ and premorphisms $T\to {\mathcal{PT}}(X)$ (respectively $T\to {\mathcal{PT}}^c(X)$), see \cite[Proposition~2.8]{H07},
\item \label{i:23m2a} right (or, equivalently, left) partial actions of $T$ on $X$ by partial bijections and premorphisms $T\to {\mathcal I}(X)$, see \cite{K15}.
\end{enumerate}
\end{corollary}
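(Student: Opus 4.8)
The plan is to derive everything from Proposition~\ref{prop:terminology} together with the two bijections recorded in the discussion preceding the definition of premorphisms. First I would establish the top-level statement by checking that the two assignments of Proposition~\ref{prop:terminology}(\ref{i:22m1}) and (\ref{i:22m2}) are mutually inverse. This is a matter of unwinding definitions: starting from a partial multiaction $\mathcal{G}$, passing to the premorphism $t\mapsto \varphi_t=\{(x,y)\colon (x,t,y)\in{\mathrm E}(\mathcal{G})\}$, and then back to the multiaction with edge set $\{(x,t,y)\colon (x,y)\in\varphi_t\}$ manifestly returns $\mathcal{G}$, while the reverse round trip is identical by the same equivalence ``$(x,t,y)\in{\mathrm E}(\mathcal{G})$ iff $(x,y)\in\varphi_t$''. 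Thus the two maps define a bijection between partial multiactions of $T$ on $X$ and premorphisms $T\to\mathcal{B}(X)$, proving the first assertion.

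For part~(\ref{i:23m1a}) I would simply compose two bijections, each of which is a restriction of this one. By the correspondence recorded in the text, right partial actions of $T$ on $X$ are in bijection with partial multiactions satisfying (LD); and by Proposition~\ref{prop:terminology}(\ref{i:22m3}) the main bijection carries the multiactions satisfying (LD) exactly onto the premorphisms $\varphi$ with $\varphi(T)\subseteq\mathcal{PT}(X)$. Composing, right partial actions correspond bijectively to premorphisms $T\to\mathcal{PT}(X)$, which is the correspondence of \cite[Proposition~2.8]{H07}. The left-handed version is obtained dually, replacing (LD) by (RD) and $\mathcal{PT}(X)$ by $\mathcal{PT}^c(X)$.

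Part~(\ref{i:23m2a}) follows in the same spirit once the relevant determinism conditions are identified. A right partial action is \emph{by partial bijections} precisely when each $\varphi_t$ is injective on its domain, that is, when the attached multiaction satisfies (RD) in addition to the (LD) it already satisfies; symmetrically, a left partial action by partial bijections already satisfies (RD) and, being by partial bijections, also satisfies (LD). Hence both notions single out the partial multiactions satisfying both (LD) and (RD), which by Proposition~\ref{prop:terminology}(\ref{i:22m4}) correspond under the main bijection exactly to the premorphisms $\varphi$ with $\varphi(T)\subseteq\mathcal{I}(X)$; this is the correspondence of \cite{K15}.

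The argument is almost entirely bookkeeping, so the only point demanding real care --- and the step I expect to be the main (if minor) obstacle --- is verifying that the subordinate correspondences are genuine \emph{restrictions} of the single top-level bijection, so that the word ``subsumes'' holds literally. Concretely, one must confirm that translating a partial action into its attached multiaction and then into a premorphism yields the same premorphism one would write down directly (e.g.\ $\varphi_t=\{(x,x\cdot t)\colon x\cdot t\ \text{is defined}\}$ for a right partial action), and that the conditions (LD) and (RD) match exactly the injectivity conditions defining $\mathcal{PT}(X)$, $\mathcal{PT}^c(X)$ and $\mathcal{I}(X)$. Both are immediate from the definitions but should be stated explicitly to justify the claimed subsumption.
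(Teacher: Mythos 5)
Your proposal is correct and follows the same route the paper intends: the corollary is stated without a separate proof precisely because it is the composite of Proposition \ref{prop:terminology}(1)--(4) with the bijections between (left/right) partial actions and partial multiactions satisfying (RD)/(LD) recorded just before the definition of premorphisms. Your explicit check that the subordinate correspondences are restrictions of the top-level bijection is exactly the bookkeeping the paper leaves implicit.
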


\subsection{Partial multiactions of monoids on semilattices with compatible restrictions and corestrictions}  Let $T$ be a monoid, $E$ a semilattice and ${\mathcal G}$ a labelled directed graph with compatible restrictions and corestrictions (see Definition  \ref{def:labelled_graph}). In this subsection we additionally suppose that  ${\mathcal G}$ is a partial multiaction of $T$ on $E$ (see Definition \ref{def:part_multiaction}). This means that if $(e,t,f), (f,s,g)\in {\mathrm{E}}({\mathcal G})$ then $(e,ts,g)\in {\mathrm{E}}({\mathcal G})$. We then call the category ${\mathcal G}$ a {\em partial multiaction of} $T$ {\em on} $E$ {\em with compatible restrictions and corestrictions.} Note that every $\equiv$-class of the path semicategory ${\mathcal C}(\mathcal G)$ contains a unique path of length $1$, which is an edge of ${\mathcal G}$. It follows that the category ${{\widetilde{\mathcal C}}}({\mathcal G})$ is isomorphic to ${\mathcal G}$ via the identity map on objects and the map $[(e,t,f)]\mapsto (e,t,f)$ on the arrows. From now on we identify ${{\widetilde{\mathcal C}}}({\mathcal G})$ with ${\mathcal G}$. In particular, the underlying set of the Ehresmann semigroup $E\rtimes_{\mathcal G} T$ coincides with ${\mathrm{E}}({\mathcal G})$.

For a strictly proper Ehresmann semigroup $S$ we put ${\mathcal G}_S$ to be the underlying graph ${\mathcal G}_{S,S}$ of $S$ with respect to the proper generating ideal $S$. Applying Theorem \ref{th:construction}\eqref{i:proper} we obtain the following statement.

\begin{proposition}\label{prop:strictly}\mbox{}
\begin{enumerate}
\item\label{i:pr41}
 Let ${\mathcal G}$ be a partial multiaction of a monoid $T$ on a semilattice $E$ with compatible restrictions and corestrictions and assume that for all $c, d\in E\rtimes_{\mathcal G} T$: $c \mathrel{\sigma} d$ holds if and only if ${\mathbf l}(c) = {\mathbf l}(d)$. Then $E\rtimes_{\mathcal G} T$ is strictly proper and $(E\rtimes_{\mathcal G} T)/\sigma\simeq T$ via the map $[(e,s,f)]_{\sigma}\mapsto s$.
 \item\label{i:pr42} Let $S$ be a strictly proper Ehresmann semigroup. Then the underlying labelled directed graph  ${\mathcal G}_S$ is a partial multiaction of $S/\sigma$ on $P(S)$.
 \end{enumerate}
\end{proposition}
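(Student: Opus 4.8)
The plan is to derive both parts directly from the machinery already developed, so that essentially no new computation is needed beyond recognising two identifications. Both statements are short deductions: part \eqref{i:pr41} from Theorem \ref{th:construction}\eqref{i:proper}, and part \eqref{i:pr42} from the behaviour of matching factorizations under Lemma \ref{lem:matching1}.

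For part \eqref{i:pr41}, I would begin from the observation made in the paragraph preceding the proposition: since ${\mathcal G}$ is a partial multiaction, every $\equiv$-class of ${\mathcal C}({\mathcal G})$ contains a unique path of length one, so ${{\widetilde{\mathcal C}}}({\mathcal G})$ is identified with ${\mathcal G}$ and the underlying set of $E\rtimes_{\mathcal G} T$ coincides with ${\mathrm E}({\mathcal G})$. Under the standing hypothesis that $c \mathrel{\sigma} d$ holds if and only if ${\mathbf l}(c) = {\mathbf l}(d)$, Theorem \ref{th:construction}\eqref{i:proper} applies and yields that $E\rtimes_{\mathcal G} T$ is proper with proper generating ideal $Y = \{[(e,s,f)]\colon (e,s,f)\in {\mathrm E}({\mathcal G})\}$, that all elements of $Y$ are proper, and that $(E\rtimes_{\mathcal G} T)/\sigma \simeq T$ via $[(e,s,f)]_\sigma \mapsto s$. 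The key point is that $Y$ is now the whole of $E\rtimes_{\mathcal G} T$, because every element is an edge; hence the assertion that all elements of $Y$ are proper says precisely that $E\rtimes_{\mathcal G} T$ is strictly proper. This, together with the quotient isomorphism supplied by the theorem, completes part \eqref{i:pr41}.

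For part \eqref{i:pr42}, I would first recall that a strictly proper $S$ is proper with proper generating ideal $Y = S$, so ${\mathcal G}_S = {\mathcal G}_{S,S}$ is a labelled directed graph with compatible restrictions and corestrictions by Lemma \ref{lem:graph}, and $a \mapsto (a^+, [a]_\sigma, a^*)$ is a bijection between $S$ and ${\mathrm E}({\mathcal G}_S)$. It then remains only to verify the partial-multiaction axiom (PM) and the condition that every element of $S/\sigma$ labels an edge. The latter is immediate, since any $t \in S/\sigma$ equals $[a]_\sigma$ for some $a \in S$, giving the edge $(a^+, t, a^*)$. For (PM), suppose $(x,t,y), (y,s,z) \in {\mathrm E}({\mathcal G}_S)$ arise from $a, b \in S$, so that $a^+ = x$, $a^* = y = b^+$, $b^* = z$, $[a]_\sigma = t$ and $[b]_\sigma = s$. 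The equality $a^* = b^+$ means that $ab$ is a matching factorization, so by Lemma \ref{lem:matching1} we have $(ab)^+ = a^+ = x$ and $(ab)^* = b^* = z$, while $[ab]_\sigma = [a]_\sigma [b]_\sigma = ts$ because $\sigma$ is a congruence. Hence the element $ab \in S = Y$ gives the edge $((ab)^+, [ab]_\sigma, (ab)^*) = (x, ts, z) \in {\mathrm E}({\mathcal G}_S)$, which is exactly (PM).

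Neither part presents a genuine obstacle. The one point requiring care in part \eqref{i:pr41} is the identification that the partial-multiaction hypothesis forces $Y$ to exhaust the whole semigroup, for it is this collapse that upgrades propriety to strict propriety. In part \eqref{i:pr42} the only thing to notice is that composability of edges in the sense of Definition \ref{def:part_multiaction} coincides with the matching condition $a^* = b^+$, which is precisely what licenses the use of Lemma \ref{lem:matching1}.
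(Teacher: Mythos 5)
Your proof is correct and follows essentially the same route as the paper, which simply cites Theorem \ref{th:construction}\eqref{i:proper} and leaves the verification implicit. Your explicit check of axiom (PM) via Lemma \ref{lem:matching1} and the observation that strict properness amounts to the generating ideal $Y$ exhausting the whole semigroup are exactly the details the paper omits.
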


\subsection{Strictly proper Ehresmann semigroups arising from deterministic partial multiactions}  Let ${\mathcal G}$ be a partial multiaction of a monoid $T$ on a semilattice $E$ with compatible restrictions and corestrictions.

\begin{lemma} \label{lem:proper} Let ${\mathcal G}$ satisfy (LD) or (RD). Then:
\begin{enumerate}
\item\label{lem4:1} $(e,s,f) \mathrel{\sigma} (g,t,h)$ if and only if $s=t$;
\item \label{lem4:2} $E\rtimes_{\mathcal G} T$ is strictly proper and $(E\rtimes_{\mathcal G} T)/\sigma \simeq T$ via the map $[(e,s,f)]_{\sigma}\mapsto s$.
\end{enumerate}
 \end{lemma}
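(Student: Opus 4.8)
The plan is to handle part (1) first, since part (2) will follow immediately from it together with Proposition \ref{prop:strictly}\eqref{i:pr41}. Throughout I use the standing identification of ${{\widetilde{\mathcal C}}}({\mathcal G})$ with ${\mathcal G}$, under which the elements of $E\rtimes_{\mathcal G} T$ are exactly the edges $(e,s,f)\in{\mathrm E}({\mathcal G})$ and the label of such an edge is ${\mathbf l}((e,s,f))=s$. With this, the forward implication of (1) is precisely Theorem \ref{th:construction}\eqref{i:sigma}: if $(e,s,f)\mathrel{\sigma}(g,t,h)$ then their labels agree, that is, $s=t$. No determinism is needed here.

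The backward implication is where (LD) or (RD) enters, and this is the main point. Assume $s=t$, and suppose first that (LD) holds. I would restrict both edges to the meet $eg$ of their initial vertices. Since $eg\leq e$ and $eg\leq g$, the restrictions $_{eg}|(e,s,f)$ and $_{eg}|(g,s,h)$ are defined, and by (R1) each is an edge with initial vertex $eg$ and label $s$. But (LD) asserts that there is at most one edge with a prescribed initial vertex and a prescribed label, so these two restrictions must be literally the same edge. By Theorem \ref{th:construction}\eqref{i:orders} a restriction lies below the original element in the natural partial order, and since $a\leq b$ implies $a\mathrel{\sigma}b$ (Section \ref{s:prelim}), we get
$$
(e,s,f)\mathrel{\sigma}{}_{eg}|(e,s,f)={}_{eg}|(g,s,h)\mathrel{\sigma}(g,s,h),
$$
hence $(e,s,f)\mathrel{\sigma}(g,s,h)$. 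If instead (RD) holds, the argument is dual: one corestricts both edges to the meet $fh$ of their terminal vertices, uses (CR1) together with (RD) to conclude that the two corestrictions coincide, and again passes to $\sigma$ through the natural partial order. This establishes (1).

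Part (2) is then formal. By (1), for all $c,d\in E\rtimes_{\mathcal G} T$ we have $c\mathrel{\sigma}d$ if and only if ${\mathbf l}(c)={\mathbf l}(d)$, which is exactly the hypothesis of Proposition \ref{prop:strictly}\eqref{i:pr41}. Applying that proposition gives that $E\rtimes_{\mathcal G} T$ is strictly proper and that $[(e,s,f)]_{\sigma}\mapsto s$ is an isomorphism $(E\rtimes_{\mathcal G} T)/\sigma\simeq T$. I expect the only real obstacle to be the backward direction of (1); the key realization is that collapsing to a common initial (respectively terminal) vertex forces the two restrictions (respectively corestrictions) to be equal on the nose by determinism, after which the implication $a\leq b\Rightarrow a\mathrel{\sigma}b$ finishes the job with no further computation.
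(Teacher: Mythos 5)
Your proposal is correct and follows essentially the same route as the paper: reduce to the backward implication via Theorem \ref{th:construction}\eqref{i:sigma}, restrict (or corestrict) both edges to the meet of their initial (or terminal) vertices, invoke determinism to identify the two restrictions, and pass back up to $\sigma$. The only cosmetic difference is that you justify $_{eg}|(e,s,f)\mathrel{\sigma}(e,s,f)$ via the natural partial order and Theorem \ref{th:construction}\eqref{i:orders}, whereas the paper writes the restriction as the product $(eg,1,eg)\cdot(e,s,f)$ and uses that $\sigma$ identifies projections; both are valid and equivalent.
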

 
\begin{proof} 
(1) In view of Theorem \ref{th:construction}(\ref{i:sigma}) we only need to prove that $s=t$ implies that $(e,s,f) \mathrel{\sigma} (g,t,h)$. Consider the case where ${\mathcal G}$ satisfies condition (LD), the other case is dual. So let $s\in T$ and show that $(e,s,f) \mathrel{\sigma} (g,s,h)$.  Applying \eqref{eq:def_product} and (CR5b), we have $(eg,1,eg)\cdot (e,s,f) =
(eg,1,eg)|_{eg}\, _{eg}|(e,s,f) = {}_{eg}|(e,s,f)$.
Since $\sigma$ identifies all the projections, 
$_{eg}|(e,s,f)= (eg,1,eg)\cdot (e,s,f) \mathrel{\sigma} (e,1,e)\cdot (e,s,f) = (e,s,f)$ and similarly  $_{eg}|(g,s,h) \mathrel{\sigma} (g,s,h)$. Note that ${\bf d}(_{eg}|(e,s,f)) = {\bf d}(_{eg}|(g,s,h)) = eg$. Condition (LD) implies that \mbox{$_{eg}|(e,s,f)  \space{} =  {}_{eg}|(g,s,h)$.} This yields that  $(e,s,f)\mathrel{\sigma} (g,s,h)$, as desired.

(2) This follows from part (1) and  Proposition \ref{prop:strictly}\eqref{i:pr41}.
\end{proof}
 
We now demonstrate that proper left (respectively right) restriction Ehresmann semigroups arise from partial multiactions of monoids on semilattices with compatible restrictions and corestrictions that satisfy condition (LD) (respectively condition (RD)).

\begin{proposition}\label{prop:left_restr}
If ${\mathcal G}$ satisfies condition (LD) (respectively condition (RD), or conditions (LD) and (RD)) then
 the Ehresmann semigroup $E\rtimes_{\mathcal G} T$ is a proper left restriction semigroup (respectively proper right restriction semigroup, or proper restriction semigroup) with $(E\rtimes_{\mathcal G} T)/\sigma \simeq T$ via the map $[(e,s,f)]_{\sigma}\mapsto s$.
\end{proposition}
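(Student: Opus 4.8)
The plan is to leverage Lemma~\ref{lem:proper}, which already delivers two of the three assertions: under (LD) (or (RD)) the semigroup $E\rtimes_{\mathcal G} T$ is strictly proper as an Ehresmann semigroup and $(E\rtimes_{\mathcal G} T)/\sigma\simeq T$ via $[(e,s,f)]_\sigma\mapsto s$. Thus the genuinely new content is to show that (LD) upgrades the Ehresmann structure to a \emph{left restriction} structure and, simultaneously, that ``proper as an Ehresmann semigroup'' sharpens to ``proper as a left restriction semigroup''. The engine behind both is the observation that under (LD) an edge of ${\mathcal G}$ is completely determined by its initial vertex together with its label, so any two elements of $E\rtimes_{\mathcal G} T$ sharing an initial vertex and a label must coincide.

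First I would verify the left ample identity $a\varepsilon=(a\varepsilon)^+a$ for an arbitrary $a=(e,u,f)$ and projection $\varepsilon=(k,1,k)$, since by \eqref{eq:mov_proj} this identity characterises left restriction semigroups among left Ehresmann semigroups. Using the product formula \eqref{eq:def_product} together with (R5) and (CR1), one computes $a\varepsilon = a|_{fk}\,{}_{fk}|\varepsilon = (e',u,fk)$, where $e'={\mathbf d}(a|_{fk})\leq e$; taking $^+$ gives $(a\varepsilon)^+=(e',1,e')$. Then, using \eqref{eq:def_product}, (CR2) and (R1) together with $e'e=e'$, one finds $(a\varepsilon)^+a=(e',1,e')\,{}_{e'}|a=(e',u,f'')$ with $f''={\mathbf r}({}_{e'}|a)$. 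The two results $(e',u,fk)$ and $(e',u,f'')$ are edges with the same initial vertex $e'$ and the same label $u$, so condition (LD) forces $fk=f''$ and hence $a\varepsilon=(a\varepsilon)^+a$. This establishes that $E\rtimes_{\mathcal G} T$ is a left restriction semigroup.

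Next I would establish properness as a left restriction semigroup directly. Suppose $a=(e,s,f)$ and $b=(g,t,h)$ satisfy $a^+=b^+$ and $a\mathrel{\sigma}b$. Since $a^+=(e,1,e)$ and $b^+=(g,1,g)$, the first condition gives $e=g$, while Lemma~\ref{lem:proper}(\ref{lem4:1}) turns the second into $s=t$. Thus $a$ and $b$ are edges sharing an initial vertex and a label, so (LD) yields $f=h$ and therefore $a=b$. Hence $E\rtimes_{\mathcal G} T$ is a proper left restriction semigroup, and the isomorphism $(E\rtimes_{\mathcal G} T)/\sigma\simeq T$ is that of Lemma~\ref{lem:proper}(\ref{lem4:2}).

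Finally, the parenthetical (RD) statement follows by the dual argument, replacing the left ample identity by the right ample identity $\varepsilon a=a(\varepsilon a)^*$ from \eqref{eq:mov_proj} and using (R5), (R1), (CR2), (CR1) in the symmetric roles, with properness as a right restriction semigroup coming from $a^*=b^*$, $a\mathrel{\sigma}b$ and (RD). When both (LD) and (RD) hold, $E\rtimes_{\mathcal G} T$ is simultaneously a left and a right restriction semigroup and is proper on both sides, hence a proper restriction semigroup. The only mildly delicate point is keeping the restriction and corestriction bookkeeping straight in the ample-identity computation; once both sides are reduced to single edges, the determinism hypothesis closes the argument at once, so no serious obstacle arises.
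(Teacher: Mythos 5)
Your proposal is correct and follows essentially the same route as the paper: invoke Lemma~\ref{lem:proper} for strict properness and the quotient isomorphism, verify the left ample identity by computing $a\varepsilon$ and $(a\varepsilon)^+a$ as single edges with common initial vertex and label and closing with (LD), and deduce left-restriction properness from $a^+=b^+$, equality of labels and (LD). The bookkeeping in your ample-identity computation (including the reduction $e'e=e'$ via (CR1)) matches the paper's, so nothing further is needed.
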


\begin{proof} We consider only the case of condition (LD), the case with (RD) is dual, and the third case follows from the first two. From Lemma \ref{lem:proper} it follows that the Ehresmann semigroup $E\rtimes_{\mathcal G} T$ is strictly proper with $(E\rtimes_{\mathcal G} T)/\sigma \simeq  T$ via the map $[(e,s,f)]_{\sigma}\mapsto s$. We verify that $E\rtimes_{\mathcal G} T$ satisfies the left ample identity \eqref{eq:mov_proj}. Let $a = (e,s,h)$ and $f=(g,1,g)$. Then by \eqref{eq:def_product} and (R5) we have $a\cdot f = (e,s,h)\cdot (g,1,g) = (e,s,h)|_{hg} \,  {}_{hg}|(g,1,g)= (e',s,hg) (hg,1,hg)=(e',s,hg)$ 
where $e' = {\mathbf d}((e,s,h)|_{hg})$. Hence $(a\cdot f)^+ = (e',1,e')$ and 
$$
(a\cdot f)^+\cdot a = (e',1,e')\cdot (e,s,h) = \space{} _{e'}|(e,s,h)=(e', s, h')
$$
where $h'\leq h$ and for the last equality we applied (R1). Since $(e',s, hg), (e',s, h') \in {\mathrm{E}}({\mathcal G})$, condition (LD) yields $h'=hg$. This implies that $a\cdot f = (a\cdot f)^+\cdot a$, as needed. We finally show that $E\rtimes_{\mathcal G} T$ is a proper left restriction semigroup. Assume that $a,b\in E\rtimes_{\mathcal G} T$ are such that $a^+=b^+$ and $a\mathrel{\sigma} b$. This means that $a=(e,s,f)$ and $b=(e,s,g)$ for some $e,f,g\in E$ and $s\in T$. Condition (LD) now yields that $f=g$, that is, $a=b$.
\end{proof}

 The following statement shows that strictly proper Ehresmann semigroups generalize Ehresmann semigroups which are proper left (or right) restriction and they also generalize proper restriction semigroups. It follows directly from the definition of properness.
 
 \begin{lemma}\label{lem:proper_Ehr_left_restr}
 Let $S$ be an Ehresmann semigroup, which is a proper left (or right) restriction semigroup. Then $S$ is a strictly proper Ehresmann semigroup. Consequently, if $S$ is a proper restriction semigroup then it is a strictly proper Ehresmann semigroup.
 \end{lemma}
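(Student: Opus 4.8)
The plan is to show directly from Definition~\ref{def:proper_elem} that every element of $S$ is proper, i.e.\ that $S$ is strictly proper. Recall that an element $s\in S$ is \emph{proper} precisely when, for every $t\in S$, the three conditions $t^+=s^+$, $t^*=s^*$ and $t\mathrel{\sigma}s$ together force $t=s$. The key observation driving the proof is that this is a \emph{weaker} separation requirement than properness as a left (or right) restriction semigroup: the latter guarantees equality already from the two conditions $t^+=s^+$ and $t\mathrel{\sigma}s$, whereas strict properness is allowed to assume the additional coordinate $t^*=s^*$ as well. Thus we have more hypotheses available and the same desired conclusion, so the implication should be immediate.

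First I would treat the left restriction case. Fix an arbitrary $s\in S$ and suppose $t\in S$ satisfies $t^+=s^+$, $t^*=s^*$ and $t\mathrel{\sigma}s$. Since $S$ is proper as a left restriction semigroup, the separation property says that $t^+=s^+$ together with $t\mathrel{\sigma}s$ already implies $t=s$; the hypothesis $t^*=s^*$ is simply not needed. Hence $s$ is a proper element in the sense of Definition~\ref{def:proper_elem}. As $s$ was arbitrary, every element of $S$ is proper, so $S$ is strictly proper.

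The right restriction case is dual: one invokes instead that $t^*=s^*$ and $t\mathrel{\sigma}s$ imply $t=s$, discarding the coordinate $t^+=s^+$. For the final assertion, a proper restriction semigroup is by definition proper both as a left and as a right restriction semigroup, so it is in particular a proper left restriction semigroup and the first part applies verbatim, giving strict properness.

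I do not expect any genuine obstacle here; the only point requiring care is the logical direction, namely recognizing that passing from the two-coordinate uniqueness of proper left/right restriction semigroups to the three-coordinate uniqueness of Definition~\ref{def:proper_elem} only weakens the hypothesis side of the implication. One should also note that, although $S$ is a two-sided Ehresmann semigroup so that the operation $^{*}$ (and hence the coordinate $s^{*}$) is available, the left restriction properness never refers to $^{*}$, which is exactly why the extra coordinate is harmless and may simply be ignored in the argument.
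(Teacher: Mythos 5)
Your proof is correct and follows exactly the route the paper intends: the paper gives no written proof beyond the remark that the lemma ``follows directly from the definition of properness,'' and your observation that the two-coordinate separation property of a proper left (or right) restriction semigroup is formally stronger than the three-coordinate condition of Definition~\ref{def:proper_elem} is precisely that argument.
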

 
We now determine when an Ehresmann semigroup is proper left restriction (respectively proper right restriction, or proper restriction).
 
\begin{theorem} \label{th:proper_restriction} Let $S$ be an Ehresmann semigroup. Then $S$ is proper left restriction (respectively proper right restriction, or  proper restriction) if and only if $S$ is strictly proper Ehresmann and its underlying partial multiaction ${\mathcal G}_S$ satisfies condition (LD) (respectively (RD), or both (LD) and (RD)).
\end{theorem}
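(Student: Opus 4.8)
The plan is to prove both directions, treating only the left restriction case in detail since the right restriction case is dual and the two-sided case follows by conjunction. For the forward direction, suppose $S$ is proper left restriction. By Lemma~\ref{lem:proper_Ehr_left_restr}, $S$ is strictly proper as an Ehresmann semigroup, so by Proposition~\ref{prop:strictly}\eqref{i:pr42} its underlying graph ${\mathcal G}_S = {\mathcal G}_{S,S}$ is a partial multiaction of $S/\sigma$ on $P(S)$. It remains to verify condition (LD): for fixed $t \in S/\sigma$ and $e \in P(S)$ there is at most one $f$ with $(e,t,f) \in {\mathrm E}({\mathcal G}_S)$. Since edges of ${\mathcal G}_S$ correspond bijectively to elements of $S$ via $a \mapsto (a^+, [a]_\sigma, a^*)$, an edge $(e,t,f)$ amounts to an element $a \in S$ with $a^+ = e$, $[a]_\sigma = t$, $a^* = f$. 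So I must show that $a^+ = b^+$ and $a \mathrel\sigma b$ force $a^* = b^*$. But under these hypotheses the defining property of a proper left restriction semigroup gives $a = b$ directly, whence $a^* = b^*$ and the second coordinate is uniquely determined. This is the easy direction.

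For the converse, suppose $S$ is strictly proper Ehresmann and ${\mathcal G}_S$ satisfies (LD). By Theorem~\ref{th:main} applied with the proper generating ideal $Y = S$, we have $S \simeq P(S) \rtimes_{{\mathcal G}_S} S/\sigma$. Under this isomorphism, the partial multiaction identification of Proposition~\ref{prop:strictly}\eqref{i:pr42} lets us identify $S$ with an Ehresmann semigroup $E \rtimes_{\mathcal G} T$ where ${\mathcal G}$ is a partial multiaction satisfying (LD). The strict properness of $S$ means that $c \mathrel\sigma d$ holds if and only if ${\mathbf l}(c) = {\mathbf l}(d)$ (this is exactly Lemma~\ref{lem:proper}\eqref{lem4:1} combined with the structure theorem, or can be read off from the hypothesis that $S$ is already known to be strictly proper). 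Now I invoke Proposition~\ref{prop:left_restr}: since ${\mathcal G}$ satisfies (LD), $E \rtimes_{\mathcal G} T$ is a proper left restriction semigroup. Transporting back along the isomorphism, $S$ is proper left restriction.

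The right restriction statement follows by the dual argument, replacing (LD) with (RD), $a^+$ with $a^*$, and Proposition~\ref{prop:left_restr} with its dual clause. The two-sided restriction case is the conjunction: $S$ is proper restriction precisely when it is both proper left and proper right restriction, which by the two cases just proved holds exactly when ${\mathcal G}_S$ satisfies both (LD) and (RD).

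The main point requiring care is the interface between the abstract structural hypotheses and the concrete graph-theoretic condition. Concretely, the only genuine content is the equivalence, for the forward direction, between the defining implication ``$a^+ = b^+$ and $a \mathrel\sigma b$ imply $a = b$'' and left-determinism of the attached multiaction; everything else is bookkeeping routed through Theorem~\ref{th:main}, Proposition~\ref{prop:strictly}, and Proposition~\ref{prop:left_restr}. I expect the one subtlety to be confirming that strict properness of $S$ really does translate into the hypothesis ``$c \mathrel\sigma d \iff {\mathbf l}(c) = {\mathbf l}(d)$'' needed to apply Proposition~\ref{prop:left_restr}; this should be immediate from Proposition~\ref{prop:strictly}\eqref{i:pr42} together with Theorem~\ref{th:construction}\eqref{i:sigma}, since strict properness forces every element to be determined by its two projections and its $\sigma$-class, and the label is a $\sigma$-invariant that separates $\sigma$-classes precisely in the proper case.
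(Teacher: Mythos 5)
Your proposal is correct and follows essentially the same route as the paper: the forward direction extracts (LD) from the defining implication of a proper left restriction semigroup via the bijection $a\mapsto(a^+,[a]_\sigma,a^*)$ between elements and edges of ${\mathcal G}_S$, and the converse is exactly the paper's appeal to Theorem~\ref{th:main} together with Proposition~\ref{prop:left_restr}. The one point you flag as a possible subtlety is in fact a non-issue: Proposition~\ref{prop:left_restr} does not require the hypothesis $c\mathrel{\sigma}d\iff{\mathbf l}(c)={\mathbf l}(d)$ as an input, since Lemma~\ref{lem:proper}(1) derives it from (LD) alone.
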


\begin{proof} We consider only the case of a left restriction semigroup, the case of a right restriction semigroup  being dual and the third case following from the first two. Suppose that $S$ is a proper left restriction semigroup. Then it is a strictly proper Ehresmann semigroup and let $a=(e,s,f), b=(e,s,g)\in P(S)\rtimes_{{\mathcal G}_S} S/\sigma$. Then $a^+ = b^+$ and $a \mathrel{\sigma} b$ in $P(S)\rtimes_{{\mathcal G}_S} S/\sigma$. By Theorem~\ref{th:main} $P(S)\rtimes_{{\mathcal G}_S} S/\sigma$ is isomorphic to $S$, hence it is a proper left restriction semigroup, so $a=b$. It follows that ${\mathcal G}_S$ satisfies condition (LD). 

For the reverse direction, suppose that $S$ is strictly proper Ehresmann and its underlying partial multiaction ${\mathcal G}_S$ satisfies condition (LD). Proposition \ref{prop:left_restr} implies that $P(S)\rtimes_{{\mathcal G}_S} S/\sigma$ is a proper left restriction semigroup. In view of Theorem \ref{th:main}, the statement follows.
\end{proof}

As a corollary we obtain a structure result for Ehresmann semigroups which are proper left restriction (or proper right restriction) and of proper restriction semigroups.

\begin{corollary}\label{cor:special}\mbox{}
\begin{enumerate}
\item  Let $S$ be an Ehresmann semigroup which is proper left restriction (respectively proper right restriction). Then it is a strictly proper Ehresmann semigroup, its underlying partial multiaction ${\mathcal G}_S$ satisfies condition (LD) (respectively condition (RD)) and $S$ is isomorphic to $P(S)\rtimes_{{\mathcal G}_S} S/\sigma$.
\item Let $S$ be a proper restriction semigroup. Then it is a strictly proper Ehresmann semigroup, its underlying partial multiaction ${\mathcal G}_S$ satisfies conditions (LD) and (RD) and $S$ is isomorphic to $P(S)\rtimes_{{\mathcal G}_S} S/\sigma$.
\end{enumerate}
\end{corollary}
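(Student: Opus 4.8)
The plan is to assemble the statement directly from Theorem \ref{th:proper_restriction} and Theorem \ref{th:main}, since as a corollary it requires essentially no new argument. I would prove part (1) in the proper left restriction case (the right restriction case being entirely dual) and then deduce part (2) by the two-sided version of the same reasoning.

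First I would record that, by Lemma \ref{lem:proper_Ehr_left_restr}, a proper left restriction Ehresmann semigroup $S$ is automatically strictly proper, so that $Y=S$ is a proper generating ideal of $S$ in the sense of Definition \ref{def:proper}. This is the key bookkeeping point: strict properness is precisely what allows us to take the whole semigroup as generating ideal, and with this choice the underlying graph ${\mathcal G}_{S,Y}={\mathcal G}_{S,S}$ is exactly the graph ${\mathcal G}_S$ introduced before Proposition \ref{prop:strictly}. By Proposition \ref{prop:strictly}\eqref{i:pr42}, ${\mathcal G}_S$ is then a partial multiaction of $S/\sigma$ on $P(S)$, so that the determinism conditions (LD) and (RD) are meaningful for it.

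Next I would invoke the forward implication of Theorem \ref{th:proper_restriction}: since $S$ is proper left restriction, that theorem gives simultaneously that $S$ is strictly proper Ehresmann and that ${\mathcal G}_S$ satisfies condition (LD), which is the second assertion of part (1). Finally, applying Theorem \ref{th:main} with the generating ideal $Y=S$ yields the isomorphism $S\simeq P(S)\rtimes_{{\mathcal G}_S}S/\sigma$, completing part (1). Part (2) follows verbatim, replacing ``proper left restriction'' by ``proper restriction'' and ``(LD)'' by the pair ``(LD) and (RD)'' throughout, and using the two-sided case of Theorem \ref{th:proper_restriction}. Because every step is a citation of an already established result, I do not expect a genuine obstacle; the only point demanding care is the identification ${\mathcal G}_{S,S}={\mathcal G}_S$ together with the verification that $Y=S$ is a legitimate proper generating ideal, which is exactly what Lemma \ref{lem:proper_Ehr_left_restr} secures.
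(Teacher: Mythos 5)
Your proposal is correct and matches the paper's (implicit) argument: the corollary is indeed just the conjunction of Lemma \ref{lem:proper_Ehr_left_restr} (strict properness, hence $Y=S$ is a proper generating ideal and ${\mathcal G}_{S,S}={\mathcal G}_S$), the forward direction of Theorem \ref{th:proper_restriction} (condition (LD), resp.\ (RD), resp.\ both), and Theorem \ref{th:main} applied with $Y=S$ for the isomorphism. The paper gives no separate proof precisely because these are the steps it intends, so nothing further is needed.
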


\subsection{The structure of proper restriction semigroups}\label{subs:proper_restr}  In this subsection we show that  the known result on the structure of proper restriction semigroups \cite{CG12, K15} is equivalent to Corollary \ref{cor:special}(2), thus it can be recovered as a special case of Theorem \ref{th:main}. 

For a semilattice $E$ let $\Sigma(E)$ be the inverse semigroup of all order isomorphisms between order ideals of $E$ \cite[VI.7.1]{Petrich}.

\begin{proposition}\label{prop:dom_ran_ideals} Let ${\mathcal G}$ be a partial multiaction of a monoid $T$ on a semilattice $E$ with compatible restrictions and corestrictions and let $\varphi\colon T\to {\mathcal B}(E)$ be its attached premorphism.
\begin{enumerate}
\item If ${\mathcal G}$ satisfies condition (LD) (respectively condition (RD)) 
then, for all $t\in T$, ${\mathrm{dom}}(\varphi_t)$ (respectively ${\mathrm{ran}}(\varphi_t)$) is an order ideal of $E$ and the map $\varphi_t$ (respectively $\varphi_t^{-1}$) is order-preserving.
\item If ${\mathcal G}$ satisfies both of the conditions (LD) and (RD) then for all $t\in T$ we have that $\varphi_t\in\Sigma(E)$. 
\end{enumerate}
\end{proposition}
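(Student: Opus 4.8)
The plan is to read off both conclusions directly from the restriction axiom (R1) and its dual (CR1), invoking conditions (LD) and (RD) only to guarantee that the relations $\varphi_t$ and $\varphi_t^{-1}$ are genuine partial maps. Throughout, recall that $(x,y)\in\varphi_t$ means precisely $(x,t,y)\in{\mathrm E}({\mathcal G})$, so that $\dom(\varphi_t)$ is the set of initial vertices of edges labelled $t$, and $\ran(\varphi_t)$ the set of their terminal vertices. Under (LD), Proposition~\ref{prop:terminology}\eqref{i:22m3} gives $\varphi_t\in{\mathcal{PT}}(E)$, so $\varphi_t$ is a partial function with domain $\dom(\varphi_t)$; dually, under (RD) the relation $\varphi_t^{-1}$ is a partial function with domain $\ran(\varphi_t)$.

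First I would treat the (LD) half of part (1). To see that $\dom(\varphi_t)$ is an order ideal, take $x\in\dom(\varphi_t)$ and $g\leq x$. There is an edge $(x,t,y)\in{\mathrm E}({\mathcal G})$, and (R1) furnishes the restriction ${}_{g}|(x,t,y)=(g,t,y')\in{\mathrm E}({\mathcal G})$ with $y'\leq y$; hence $g\in\dom(\varphi_t)$, so the domain is downward closed. For order-preservation, suppose $x_1\leq x_2$ with both in $\dom(\varphi_t)$, and put $y_2=\varphi_t(x_2)$. Restricting $(x_2,t,y_2)$ to $x_1$ yields $(x_1,t,y')$ with $y'\leq y_2$, and (LD) forces $\varphi_t(x_1)=y'$; therefore $\varphi_t(x_1)\leq\varphi_t(x_2)$. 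The (RD) half is entirely dual, with (CR1) replacing (R1): the corestriction $(x,t,y)|_{g}=(x',t,g)$ with $x'\leq x$ shows $\ran(\varphi_t)$ is an order ideal, and (RD) then makes $\varphi_t^{-1}$ order-preserving by the same argument applied to corestrictions.

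Part (2) will follow by assembling part (1). When both (LD) and (RD) hold, $\varphi_t$ is a partial bijection -- a partial function by (LD), injective by (RD) -- so by Proposition~\ref{prop:terminology}\eqref{i:22m4} indeed $\varphi_t\in{\mathcal I}(E)$. Part (1) then shows that $\dom(\varphi_t)$ and $\ran(\varphi_t)$ are order ideals of $E$ and that both $\varphi_t$ and $\varphi_t^{-1}$ are order-preserving; a bijection between two posets that is order-preserving in both directions is an order isomorphism, so $\varphi_t$ is an order isomorphism of the ideal $\dom(\varphi_t)$ onto the ideal $\ran(\varphi_t)$, i.e.\ $\varphi_t\in\Sigma(E)$.

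The argument is essentially a direct unwinding of the axioms, so I do not anticipate a genuine obstacle. The one point needing care is that order-preservation is not a consequence of (R1) alone: it is exactly (LD) that lets us identify the target $y'$ of the restricted edge ${}_{x_1}|(x_2,t,y_2)$ with the value $\varphi_t(x_1)$, and without it $\varphi_t$ need not even be single-valued. I would therefore keep the appeal to the determinism conditions explicit at each such step rather than leaving it implicit.
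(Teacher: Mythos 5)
Your proposal is correct and follows essentially the same route as the paper: downward closure of the domain via (R1) (dually (CR1)), identification of the restricted edge's target with $\varphi_t$ of the smaller element via (LD) (dually (RD)) to get order-preservation, and part (2) by combining the two halves. The only cosmetic difference is that you separate downward closure from order-preservation into two steps, whereas the paper extracts both from a single application of (R1) to the restriction ${}_f|(e,t,e\varphi_t)$.
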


\begin{proof}
(1) We consider the case where ${\mathcal G}$ satisfies (LD), the other case being dual. Let $e\in {\mathrm{dom}}(\varphi_t)$ and $f\leq e$. Then $_f|(e,t,e\varphi_t) = (f,t,g)$ where $g\leq e\varphi_t$ by (R1) which implies that $f\in\dom(\varphi_t)$, thus $\dom(\varphi_t)$ is an order ideal. Furthermore, it follows from (LD) that $g=f\varphi_t$ so that $_f|(e,t,e\varphi_t) = (f,t,f\varphi_t)$. It now follows by (R1) that $f\varphi_t\leq e\varphi_t$. Hence $\varphi_t$ is order-preserving.

(2) Let ${\mathcal G}$ satisfy both of the conditions (LD) and (RD). Then $\dom(\varphi_t)$ and $\ran(\varphi_t)$ are order-ideals of $E$, in addition $\varphi_t$ is an injective map and both $\varphi_t$ and $\varphi_t^{-1}$ are order-preserving. This means that $\varphi_t\in \Sigma(E)$, as needed.
\end{proof}

Let $S$ be a proper restriction semigroup. Denote $T=S/\sigma$. Let $\varphi\colon T\to {\mathcal B}(P(S))$ be the premorphism attached to the partial multiaction ${\mathcal G}_S$. It follows from Theorem \ref{th:proper_restriction} that $\varphi(T)\subseteq \Sigma(P(S))$. Let $t\in T$. We have
$$
\dom(\varphi_t) = \{e\in P(S)\colon \exists a\in S \text{ such that } e=a^+ \text{ and } [a]_{\sigma}=t\}.
$$
If $e\in\dom(\varphi_t)$ then $e\varphi_t=a^*$
and $a^*\varphi_t^{-1}=a^+$ where $a\in S$ is such that $[a]_{\sigma}=t$ and $a^+=e$. Since $\varphi e = e\varphi^{-1}$ for all $\varphi\in {\mathcal{I}}(E)$, the operations on $P(S)\rtimes_{{\mathcal{G}}_S} T$ can be written as follows:
\begin{align*}
(e,s,e\varphi_s)\cdot (f,t,f\varphi_t)  & =  ((e\varphi_s\wedge f)\varphi_s^{-1},s,e\varphi_s\wedge f)\cdot(e\varphi_s\wedge f, t, (e\varphi_s\wedge f)\varphi_t)\\
& = ((e\varphi_s\wedge f)\varphi_s^{-1}, st, (e\varphi_s\wedge f)\varphi_t),
\end{align*}
$$
(e,s,e\varphi_s)^+=(e,1,e), \,\,\, (e,s,e\varphi_s)^* = (e\varphi_s,1,e\varphi_s).
$$

Since the third component of a triple $(e,s,e\varphi_s)$ is determined by the first two components, $(e,s,e\varphi_s)$ is determined by the pair $(e,s)$. The set ${\mathcal A}$ of all such pairs is in a bijection with the underlying set of $P(S)\rtimes_{{\mathcal{G}}_S} T$. The operations on  $P(S)\rtimes_{{\mathcal{G}}_S} T$ are translated to operations on ${\mathcal A}$ as follows:
\begin{equation}\label{eq:A}
(e,s)\cdot (f,t) = ((e\varphi_s\wedge f)\varphi_s^{-1}, st),\,\, (e,s)^+ = (e,1), \,\, (e,s)^* = (e\varphi_s,1). 
\end{equation}
On the other hand, let $\varphi$ be a partial action of $T$ on $E$ by partial bijections between order ideals such that  $\dom(\varphi_t)\neq \varnothing$ for all $t\in T$. The graph assigned to $\varphi$ has vertex set $E$ and edges $(e,t,f)$ where $t\in T$, $e\in\dom(\varphi_t)$ and $f=e\varphi_t$. We define the restriction of $(e,t,f)$ to $g\leq e$ by $_g|(e,t,f) = (g,t,g\varphi_t)$ and the corestriction of $(e,t,f)$ to $h\leq f$ by $(e,t,f)|_h = (h\varphi_t^{-1},t,h)$. It is routine to verify that then ${\mathcal{G}}$ is a partial multiaction with compatible restrictions and corestrictions which satisfies conditions (LD) and (RD). We have rediscovered the structure result on proper restriction semigroups, as it is formulated in \cite[Theorem~3]{K15}.

We conclude the paper with the following open questions.

\begin{question}\label{q:1} Is the free Ehresmann semigroup $FES(X)$ strictly proper? 
\end{question}

In other words, is it true that every element of $a \in FES(X)$ is uniquely determined by $a^*$, $a^+$ and $[a]_{\sigma}$? It is known \cite{BGG15} and easy to see that, unlike what happens in proper restriction semigroups,  an element $a\in FES(X)$ is not in general uniquely determined only by $a^+$ (or by $a^*$) and $[a]_{\sigma}$. For example, if $X=\{x,y\}$, the elements $xy^+$ and $(xy)^+x$ are different (because their underlying Kambites trees \cite{Kam11} are different) and we have that $(xy^+)^+ = ((xy)^+x)^+$ and $xy^+ \mathrel{\sigma} (xy)^+x$. However, $(xy^+)^* \neq ((xy)^+x)^*$.

\begin{question} Does every Ehresmann semigroup have a strictly proper cover?
\end{question}

\section*{Acknowlegements} The authors thank the anonymous referee for a very careful reading of the paper and a number of suggestions which have improved the exposition.


\end{document}